\renewenvironment{proof}[1][\proofname ]{{\noindent \bfseries #1. }}{\qed \bigskip } 
\definecolor{orange}{RGB}{220,110,0}
\definecolor{violet}{RGB}{141,10,100}
\newcommand{\R}{{\mathbb R}}
\newcommand{\e}{\varepsilon}
\newcommand\p{\partial}
\newcommand\fb[1]{\Gamma_{#1}}
\newcommand\graph[1]{{\rm{Graph}}_{#1}}
\newcommand\po[1]{\{#1>0\}}
\def\Om{\Omega}
\def\na{\nabla}
\def\W{\mathbb W}
\newtheorem{theorem}{Theorem}[section]
\newtheorem{cor}[theorem]{Corollary}
\newtheorem{defn}{Definition}[section]
\newtheorem{example}[theorem]{Example}
\newtheorem{lem}[theorem]{Lemma}
\newtheorem{prop}[theorem]{Proposition}
\newtheorem{remark}[theorem]{Remark}
\newtheorem{sublem}[theorem]{Sublemma}
\newtheoremstyle{named}{}{}{\itshape}{}{\bfseries}{.}{.5em}{\thmnote{#3 }#1}
\theoremstyle{named}
\newtheorem*{AAA}{Theorem A}
\newtheorem*{BBB}{Theorem B}
\numberwithin{equation}{section}
\def\@tocline#1#2#3#4#5#6#7{\relax
  \ifnum #1>\c@tocdepth % then omit
  \else
    \par \addpenalty\@secpenalty\addvspace{#2}%
    \begingroup \hyphenpenalty\@M
    \@ifempty{#4}{%
      \@tempdima\csname r@tocindent\number#1\endcsname\relax
    }{%
      \@tempdima#4\relax
    }%
    \parindent\z@ \leftskip#3\relax \advance\leftskip\@tempdima\relax
    \rightskip\@pnumwidth plus4em \parfillskip-\@pnumwidth
    #5\leavevmode\hskip-\@tempdima
      \ifcase #1
       \or\or \hskip 2em \or \hskip 2em \else \hskip 3em \fi%
      #6\nobreak\relax
    \dotfill\hbox to\@pnumwidth{\@tocpagenum{#7}}\par
    \nobreak
    \endgroup
  \fi}
\title[{$K$}-surfaces with free boundaries]{$K$-surfaces with free boundaries}
\author{Hayk Aleksanyan}
\address{Department of Mathematics, KTH Royal Institute of Technology, SE-100 44  Stockholm,
Sweden}
\email{hayk.aleksanyan@gmail.com}
\author{Aram L. Karakhanyan}
\address{Maxwell Institute for Mathematical Sciences and School of Mathematics,
University of Edinburgh, James Clerk Maxwell Building, Peter Guthrie Tait Road,
Edinburgh EH9 3FD, United Kingdom}
\email{aram6k@gmail.com}
\thanks{H. A. was supported by postdoctoral fellowship from Knut and Alice Wallenberg Foundation. }
\begin{document}
\baselineskip=15pt    %line spacing 

\begin{abstract}
A well-known question in classical differential geometry and geometric analysis
asks for a description of possible boundaries of $ K$-surfaces,
which are smooth, compact hypersurfaces in $\R^d$ having constant
Gauss curvature equal to $K \geq 0$. This question
generated a considerable amount of remarkable results in the last few decades.
Motivated by these developments here we study the question
of determining a $K$-surface when only part of its boundary is
fixed, and in addition the surface hits a given manifold at some fixed angle. 
While this general setting is out of reach for us at the present,
we settle a model case of the problem, which in its analytic formulation 
reduces to  a Bernoulli type free boundary problem for the Monge-Amp\`ere equation.
We study both the cases of 0-curvature and of positive curvature.
The formulation of the free boundary condition and its regularity are the most delicate and challenging questions addressed in this work.  
In this regard we introduce a notion of a \textit{Blaschke extension} of a solution
which might be of independent interest.

The problem we study can also be interpreted as the  Alt-Caffarelli problem for the 
Monge-Amp\`ere equation. Moreover, it also relates to the problem of isometric embedding of a 
positive metric on the annulus with partially prescribed boundary and optimal transport with free mass.

\bigskip

\noindent 

\vspace{0.1cm}

\noindent \textbf{Keywords: } 
$K$-surface, Gauss curvature, Monge-Amp\`ere equation, Blaschke's rolling ball, Bernoulli free boundary, ruled surface

\vspace{0.1cm}

\noindent {\textbf{MSC 2010:} 35J96, 35R35, 53C45  (53A05, 14J26) }

\end{abstract}

%35J96  	Elliptic Monge-Ampère equations
%35R35  	Free boundary problems
%53C45   Global surface theory (convex surfaces à la A. D. Aleksandrov)
%53A05  	Surfaces in Euclidean space
%14J26  	Rational and ruled surfaces

\maketitle

{\small{ \tableofcontents}}

%-------------------------------------
%    SECTION
%-------------------------------------
\section{Introduction}

\subsection{Background and motivation}
$K$-surface in $\R^{d+1}$ $(d\geq 2)$ is a smooth compact hypersurface of
constant Gauss curvature $K$. A problem of fundamental importance in classical differential geometry and geometric analysis
concerns description of possible boundaries of $K$-surfaces.
Precisely, when a collection $\gamma=\{\gamma_1, ...,\gamma_n \}$ of disjoint $(d-1)$-dimensional
closed smooth embedded submanifolds can form a boundary of a (in general immersed) $K$-surface\footnote{
Note, that a $K$-surface without a boundary with $K>0$, is a boundary of a convex body.} in $\R^{d+1}$?
In particular, S.-T. Yau \cite[Problem 26]{Yau} in his famous list of open problems asks for conditions on a space curve $\gamma \subset \R^3$
to be the image of the boundary of an isometric embedding of the given smooth metric with positive curvature on the disk.
In their analytic formulation, locally these problems mostly reduce to 
equations of Monge-Amp\`ere type. Moreover, for the case of positive curvature $K>0$,
the analysis is restricted to the case of elliptic equations. This drives the study to the
class of \emph{locally strictly convex} hypersurfaces\footnote{These surfaces locally lie on one side of their tangent hyperplanes,
but need not do so globally.}.
With this in mind, an immediate necessary condition on $\gamma$ to bound a locally strictly convex hypersurface, 
is that its second fundamental form must be everywhere non-degenerate
(in particular, in $\R^3$ this means that the curve $\gamma$ is free of inflection points).
This elementary limitation, however, is not sufficient. It was shown by Rosenberg \cite{Rosenberg}
that there are further topological objections on $\gamma$, which was later demonstrated by Gluck and Pan \cite{Gluck-Pan} to be not sufficient either. Most recently, Ghomi \cite{Ghomi17}, answering a question of Rosenberg
\cite{Rosenberg} from 1993, proved that the torsion of any closed space curve
bounding a simply connected locally convex surface vanishes at least 4 times.
This important result is the only new necessary condition for the existence
of $K$-surfaces since the condition obtained by Rosenberg in early 90's involving the self-linking number of the curve. 
We give a short review of the literature in subsection \ref{subsec-related} below.

For many years these problems have served as an important bridge between differential geometry and
the analysis of partial differential equations, in particular for equations of Monge-Amp\`ere type,
where the advance on either side motivated and lead to new discoveries on the other end.
Inspired by these developments, in this paper we introduce and study a class of $K$-surfaces
with free boundaries. Namely, instead of prescribing 
a set $\gamma$ of closed smooth strictly convex codimension 2 submanifolds, and asking
for a (locally) convex smooth manifold spanning $\gamma$,
we treat the given $\gamma$ as part of the boundary of the sought-for surface.
Then, fixing a smooth embedded submanifold $\gamma$ of codimension 1,
we ask if there is a $K$-surface spanning $\gamma$ 
and hitting the (given) target manifold $T_0$ at some prescribed constant angle.
In this generality, the problem seems to be out of reach,
and here we will study a special case when the boundary of the surface lies in a hyperplane,
and the target manifold is a hyperplane parallel to the one containing the portion
of the fixed boundary. This basic geometric setting already presents a non-trivial challenge, and seems to be an important model case treated for instance in \cite{HRS}
for the case of classical $K$-surfaces discussed above.
We carry out the analysis in two distinct cases, namely when the curvature is vanishing,
and when the curvature is strictly positive. 
We examine qualitative properties of the problem,
i.e. existence, uniqueness and regularity of solutions, as well as regularity and geometric properties of
associated free boundaries. Our methods are of both geometric and analytic nature, 
drawing ideas from convex geometry, theory of Monge-Amp\`ere equations, and free boundary problems.

\subsection{Formal setting and main results}
We now proceed to formal definitions. 
Fix $\gamma=\{\gamma_1, ...,\gamma_n \}$ a collection of disjoint $(d-1)$-dimensional
closed smooth embedded submanifolds in $\R^{d+1}$, and let
$T_0$ be a smooth embedded manifold of codimension 1 in $\R^{d+1}$, and $\lambda_0>0$ be a fixed constant.
The general question, we have in mind, which we refer to as \emph{$K$-surfaces with free boundaries},
is the following:

\smallskip
%\noindent
\emph{When does there exist a $K$-surface which spans $\gamma$ and hits $T_0$ at the prescribed constant angle $\theta > 0$ ?}

\smallskip

This is of course a generalization of the classical question on $K$-surfaces to allow
free boundaries. A model of this problem, which we will study here, is when the target manifold $T_0 = \R^d \times \{0\}$ 
and $\gamma \subset \R^d \times \{h_0\}$ is the boundary of some bounded convex set, and $h_0>0$ is a given constant.
Then, one asks for the existence of a $K$-surface, having $\gamma$ on its boundary
and intersecting $T_0$ at a prescribed constant angle $\theta$. In analytic terms,
we ask for solvability of the following Bernoulli type free boundary  problem
for the Monge-Amp\`ere equation: given a convex domain $\Omega \subset \R^d\times \{0\}$ and parameters $h_0, \lambda_0>0$,
$K_0 \geq 0$
find a concave function $u:\R^d\times \{0\} \to \R_+$ such that
\begin{equation}\label{prob-main}
\begin{cases}  \mathrm{det} D^2 (- u)=K_0 \psi(|\na u|), &\text{in $\left( \R^d \setminus \overline{\Omega} \right)$}\cap \{u>0\}, \\ 
 u=h_0 ,&\text{on $ \partial \Omega    $}, \\
 |\nabla u| =\lambda_0, &\text{on $\Gamma_u$},   \end{cases}
\end{equation}
where $\psi > 0$ is a prescribed real-valued $C^{\infty}$ function, $\Gamma_u = \partial \{ u>0\} \setminus \overline{\Omega}$
is the free boundary, and the gradient condition is clarified below.
The gradient value $\lambda_0$ corresponds to the hitting angle $\theta  = \arccos \frac1{\sqrt{1+\lambda_0^2}}$,
as will be clear later.
The three main choices for $\psi$, which are of geometric interest, include $\psi(\xi)=0$,  $\psi(\xi)=1$, and $\psi(\xi)=(1+|\xi|^2)^{\frac{d+2}2}$,
corresponding to the cases of \emph{zero curvature}, \emph{positive curvature measure}, and \emph{positive constant Gauss curvature}, respectively.
The latter case for $\psi$ defines the $K$-hypersurface.

We remark that a problem \eqref{prob-main} for $p$-Laplace operator
was studied by Henrot and Shahgholian \cite{H-Sh}. Although the analytic formulation
of the problem is similar here, our motivation and methods are entirely different from those of \cite{H-Sh}.

\subsubsection{The free boundary condition}\label{subsec-FB} 
The gradient condition on boundary in \eqref{prob-main}
is understood in a weak sense. Namely, whenever at $x_0\in \Gamma_u$ 
there is a well-defined unit inner normal to the set $\Gamma_u$, then the normal derivative of $u$ at $x_0$ equals $\lambda_0$.
A few remarks are in order. First of all, at such $x_0$ the normal derivative of $u$
exists (possibly having infinite value). This is in view of a standard fact that
concave (convex) functions have directional derivatives, which we briefly recall here.
Indeed, fix any $x_0 \in \Gamma_u$  and let $\nu$ be the unit inward normal of $\Gamma_u$
at $x_0$. Then, for any small parameters $t_2>t_1>0$ we have
$$
u(x_0  + t_1 \nu ) = u \left(  \left(1- \frac{t_1}{t_2} \right) x_0 + \frac{t_1}{t_2} (x_0 + t_2 \nu)   \right) \geq
\frac{t_1}{t_2} u(x_0 + t_2 \nu),
$$
where in the last inequality we have used that $u$ is concave and $u(x_0)=0$.
We get that the function $t\mapsto \frac{u(x_0 + t\nu)}{t}$ is decreasing in $t>0$
and hence the existence of the normal derivative at $x_0$.
Finally, we observe that the convexity of the set $\Gamma_u$ implies that it has well-defined
normal almost everywhere (with respect to the surface measure), and hence
in \eqref{prob-main} the free boundary condition must hold almost everywhere.

It will be convenient for us, to work with a geometric reformulation of the
last requirement of \eqref{prob-main}, in terms of the \emph{slope} of a support hyperplane,
to which we now proceed.
Consider the convex body $\mathcal{K}$ bounded by the graph of $u$ and hyperplanes $\R^d \times \{0\}$
and $\R^d \times \{h_0\}$. Let also $X_0  = (x_0, 0) \in \R^d\times \R$ be a point
on $\Gamma_u$ where normal to the free boundary exists. Now let $H = \{ X\in \R^{d+1}: \ (X - X_0) \cdot \nu =0 \}$
be a support hyperplane to $\mathcal{K}$ at $X_0$, where $\nu= (\nu_1,...,\nu_{d+1}) \in \R^{d+1}$ is a unit vector.
Clearly the hyperplane $G: = H\cap ( \R^d\times \{0\} ) $
defines a support plane for $\Gamma_u$ through $x_0$, and due to the existence of a
normal at $x_0$, $G$ is the unique support hyperplane to $\Gamma_u$ through $x_0$. We conclude that $H$ has only one degree of freedom,
determined by the angle it makes with $\R^d\times \{0\}$.
From now on, we will only consider the set of those $H$ which are above $\Omega$, i.e. the last coordinate of their unit normal is negative.
Using the fact $|\nu_{d+1}|<1 $ we define $\nu_* : = \frac{1}{ ( 1-\nu_{d+1}^2 )^{1/2} } \overline{\nu}$
where $\overline{\nu} \in \R^d$ is formed from the first $d$ coordinates of $\nu$.
By definition $|\nu_*|=1$, and by construction the linear function $H(x) : =  -\frac{\overline{\nu}}{\nu_{d+1}} \cdot (x-x_0) $
bounds $u$ from above, and in particular $\nu_{d+1}<0$.
Note that if $\nu$ is the unit normal to the support function at a free boundary point $x_0$
with $\nu_{d+1}<0$ then $ \nu_*$ is the inner unit normal at $x_0$.
We now fix the following:
\begin{defn}
For any hyperplane $H = \{X \in \R^{d+1}: \ (X-X_0)\cdot \nu =0 \}$, where $\nu=(\nu_1,...,\nu_{d+1})$
is a unit vector, the vector $-\frac{\overline{\nu}}{\nu_{d+1}}$ is called the slope of $H$.
\end{defn}
When it will be clear from the context, we will abuse the notation, and refer to the length of
the slope, as the slope of a hyperplane.
Getting back to the last condition in \eqref{prob-main}, we conclude that at any point $x_0\in \Gamma_u$, having a well-defined unit inward normal $\nu$,
the equivalent definition of the free boundary condition \eqref{prob-main}
is that among all support planes to the graph of $u$ at $x_0$ and staying above $\Omega$,
the smallest slope has length $\lambda_0$. This is easy to check, using the discussion above.
Geometrically, it means that the hyperplane $H$ cannot be inclined on the graph of $u$ more 
than the angle $\arccos (1+\lambda_0^2)^{-1/2}$. Such support functions will be referred to as {\bf extreme}.

\subsubsection{Weak solutions}\label{subsec-weakSol}
Here we define the notion of a weak solution (after A.D. Aleksandrov) to \eqref{prob-main} and recall some key concepts related to the 
Monge-Amp\`ere operator.

\begin{defn}
Let $u$ be a convex function defined in some domain $D \subset \R^d$. 
Given any $x_0\in D$, the following set of slopes
$$
\omega_{x_0}(u) =\{p\in \R^d : \  \ u(x) \ge p\cdot (x-x_0)+u(x_0)\quad  \forall x\in D \},
$$
is called the gradient mapping of $u$ at $x_0$; correspondingly for
a set $E\subset \Omega$ we set
$$
\omega_E (u)=\bigcup_{x\in E}\omega_x (u).
$$
\end{defn}

Due to Aleksandrov's theorem $\omega_E$ is measurable for any Borel set $E\subset D$
and induces a Borel measure on $D$ by the following way
\begin{equation}\label{MA-meas}
\mu(E)=|\omega_E(u)|,
\end{equation}
where $E\subset D$ is a Borel set, and $|\cdot | $ stands for the Lebesgue measure.
The measure $\mu$ defined by \eqref{MA-meas} is called the Monge-Amp\`ere measure.
In particular, one has the following fundamental property:
the set of all slopes $p\in \R^d$ which belong to the gradient image of more than one point of $D$
has Lebesgue measure 0 (see \cite[p. 190]{Al55}, also
\cite[Theorem 2.5]{Ra-Ta}, and \cite{Gut-b}).

An important property of the Monge-Amp\`ere measure is its weak continuity in the following sense: if 
$\{v_m\}$ is a sequence of convex functions on $D$ such that 
$v_m\to v$ locally uniformly in $D$ then 
the measure $\mu_m(E)=|\chi_{v_m}(E)|$ converges weakly to 
$\mu(E)=|\chi_v(E)|$, see  \cite{Urbas}, \cite{Gut-b}.
This is also true for the measure 
\[\mu_\psi(E)=\int_{\omega_E(u)}\frac{d\xi}{\psi(\xi)}\]
see \cite{Pog-MA}.

We next define the solution to \eqref{prob-main}.

\begin{defn}\label{defn-Aleks}
We say that a concave function $u:\R^d \setminus \Om \to \R$ is a weak solution 
(corr. super-solution) to \eqref{prob-main}
if the following hold:
\begin{itemize}
\item[\normalfont{(a)}] for any Borel set $E\subset \{u>0\}$ we have
$$
\int_{\omega_E(-u)  } \frac{d \xi}{\psi(  \xi  ) }  \ (\geq) \ = K_0 |E|,
$$
where $\omega_E(-u)$ is the gradient image of $-u$  on $E$,
\item[\normalfont{(b)}] $u=h_0$ on $\partial \Omega$ pointwise,
\item[\normalfont{(c)}] for any point $x_0 \in  \p\po u \setminus \overline{\Omega} =: \Gamma_u $ on the free boundary where the set $\Gamma_u$ has a well-defined normal,
and among the all support hyperplanes to the graph of $u$ at $x_0$
the smallest slope equals (is bounded above by) $\lambda_0$.
The class of super-solutions is denoted by $\W_+(K_0, \lambda_0, \Om)$.
\end{itemize}
\end{defn}

The last condition (c) with slopes should be understood in a sense discussed in subsection \ref{subsec-FB}.
Also, when the constant $K_0=0$ in \eqref{prob-main}, the condition (a) above, means that the Monge-Amp\`ere measure
associated with $u$ is trivial.

\subsubsection{Main results}
The following are our main results.

\begin{AAA}
In \eqref{prob-main} take $K_0=0$ and assume $\Om$ is a bounded convex domain,
which is $C^{1,1}$-regular. Then, there exists a unique concave function $u$ which is a weak solution \eqref{prob-main}. 
Moreover, the solution $u$ is a ruled surface, which is $C^{1,1}$ in 
the set $\{u>0\}\setminus \overline{\Omega}$, and the free boundary $\fb u $ is $C^{1,1}$ also.
If in addition $\Om$ is strictly convex, then so is the free boundary.
\end{AAA}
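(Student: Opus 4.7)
The strategy is to exhibit an explicit candidate dictated by the geometry, verify it against Definition \ref{defn-Aleks}, and then prove uniqueness via the rigidity of concave functions whose Monge-Amp\`ere measure vanishes.

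\medskip

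\emph{Candidate.} Since $K_0 = 0$ in \eqref{prob-main} forces the Monge-Amp\`ere measure of $-u$ to vanish on $\po u \setminus \overline{\Om}$, the concave graph of $u$ must be developable; the boundary data $u = h_0$ on $\p\Om$ together with the free boundary condition (c) then compel its generators to descend at constant rate $\lambda_0$ along the outward normals of $\p\Om$. This suggests the explicit candidate
\begin{equation*}
u^*(x) \ := \ \bigl(h_0 - \lambda_0\,\operatorname{dist}(x,\Om)\bigr)_+, \qquad x \in \R^d \setminus \Om,
\end{equation*}
whose positivity set is the Minkowski sum $\Om + B_{h_0/\lambda_0}$ and whose free boundary is the parallel hypersurface $\Gamma_{u^*} = \{x : \operatorname{dist}(x,\Om) = h_0/\lambda_0\}$.

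\medskip

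\emph{Verification and regularity.} Convexity of $\Om$ makes $\operatorname{dist}(\cdot,\Om)$ convex, so $u^*$ is concave. On the open set $\po{u^*}\setminus\overline{\Om}$ one has $\na u^* = -\lambda_0\,\na\operatorname{dist}(\cdot,\Om)$, a vector of length exactly $\lambda_0$; thus the gradient image of $-u^*$ lies on the sphere $\{|\xi| = \lambda_0\} \subset \R^d$ and has zero Lebesgue measure, verifying (a). Condition (b) is immediate, and (c) follows because the inward normal derivative of $u^*$ at any regular free-boundary point equals $\lambda_0$ by direct computation. The ruled structure is exhibited by the family of generators $\{(y + t\nu(y),\,h_0 - \lambda_0 t) : y \in \p\Om,\ 0 \leq t \leq h_0/\lambda_0\}$. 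For the regularity claims, the $C^{1,1}$ hypothesis on $\p\Om$ together with convexity yields a uniform exterior ball condition, whence the nearest-point projection $\pi_\Om$ is Lipschitz on $\R^d \setminus \Om$. Hence $\operatorname{dist}(\cdot,\Om) \in C^{1,1}(\po{u^*} \setminus \overline{\Om})$ and so is $u^*$, while $|\na\operatorname{dist}(\cdot,\Om)| \equiv 1$ makes $\Gamma_{u^*}$ a $C^{1,1}$ level set by the implicit function theorem. If $\Om$ is strictly convex, any line segment in $\Gamma_{u^*}$ would, via $\pi_\Om$, project to a line segment in $\p\Om$, contradicting strict convexity.

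\medskip

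\emph{Uniqueness and main difficulty.} Let $v \in \W_+(0,\lambda_0,\Om)$ be any solution. Vanishing Monge-Amp\`ere measure combined with concavity implies that through every point of $\po v$ passes a maximal affine segment on which $v$ is affine. A short boundary-data argument rules out segments with both endpoints on $\p\Om$ (which would lie inside $\overline{\Om}$ by convexity) or both on $\Gamma_v$ (which would force $v \equiv 0$ along the segment), so each such segment runs from some $y_1 \in \p\Om$ to some $y_2 \in \Gamma_v$. On such a segment $v$ descends linearly from $h_0$ to $0$, producing a directional derivative $h_0/|y_1 - y_2|$ at $y_2$ in the segment direction. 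At a regular point $y_2 \in \Gamma_v$ one combines two inequalities: (i) since the gradient of $v$ at $y_2$ must be normal to $\Gamma_v$ with magnitude $\lambda_0$, Cauchy--Schwarz gives $h_0/|y_1-y_2| \leq \lambda_0$, i.e.\ $|y_1-y_2| \geq h_0/\lambda_0$; (ii) the affine extension of $v$ along the segment is a support plane at $y_2$ in the sense of Section \ref{subsec-FB}, and the minimum-slope characterization of (c) forces its slope length $h_0/|y_1-y_2|$ to be at least $\lambda_0$, i.e.\ $|y_1-y_2| \leq h_0/\lambda_0$. Equality in both forces the segment to be aligned with the inward normal to $\Gamma_v$ at $y_2$ and of length exactly $h_0/\lambda_0$; hence $\Gamma_v$ coincides a.e.\ with the parallel hypersurface $\Gamma_{u^*}$, the rulings coincide, and $v = u^*$. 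The main obstacle is precisely this uniqueness step: condition (c) is available only almost everywhere on the a priori merely convex hypersurface $\Gamma_v$, so the rigid identification of the generators from the (a.e.) regular points of $\Gamma_v$ must be propagated to the entire free boundary, which requires a delicate interplay between the developable structure of $v$, the extreme-support-plane reformulation in Section \ref{subsec-FB}, and the $C^{1,1}$ regularity of $\p\Om$ (which ensures that outward normals to $\p\Om$ are globally well-defined and can consistently receive the rulings).
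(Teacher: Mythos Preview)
Your candidate $u^*(x) = (h_0 - \lambda_0 \operatorname{dist}(x,\Omega))_+$ is correct and coincides with the paper's envelope $h_*$ of extreme support hyperplanes (for $C^1$ convex $\Omega$ one has $\sup_{y\in\partial\Omega}(x-y)\cdot\nu_{\mathrm{out}}(y)=\operatorname{dist}(x,\Omega)$). Your verification and regularity route is more direct than the paper's: the paper builds $h_*$ as a limit of polyhedral solutions (Proposition~\ref{Prop-polyhedron}, Proposition~\ref{prop-exist}) and then proves $C^{1,1}$ regularity by sandwiching $h_*$ between interior and exterior conical barriers (Lemma~\ref{Lem-C11}), whereas you read everything off the distance function. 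One correction: it is the uniform \emph{interior} ball of $\partial\Omega$ (equivalently, a Lipschitz Gauss map) that makes $\nabla\operatorname{dist}=\nu_\Omega\circ\pi_\Omega$ Lipschitz; the projection $\pi_\Omega$ is $1$-Lipschitz for any convex body, so an ``exterior ball'' plays no role here.

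The uniqueness argument, however, has a real gap at step (ii). The directional slope of $v$ along the ruling $[y_2,y_1]$ equals $h_0/|y_1-y_2|$, but this is \emph{not} the slope of a support hyperplane at $y_2$. Any hyperplane $\{t=p\cdot(x-y_2)\}$ containing the graphical segment satisfies $p\cdot e=h_0/|y_1-y_2|$, hence $|p|\geq h_0/|y_1-y_2|$, with equality only when $p=(h_0/|y_1-y_2|)\,e$; but this particular $p$ is a support function of $v$ only if $e\cdot(x-y_1)\geq 0$ for all $x\in\partial\Omega$, i.e.\ only if the ruling already hits $\partial\Omega$ orthogonally---precisely the conclusion you are after. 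So condition (c) yields no upper bound on $|y_1-y_2|$ via (ii), and you obtain only the inclusion $\{h_*>0\}\subset\{v>0\}$ from (i). A second, independent issue is that the ``maximal affine segment through every point reaching both boundary components'' is asserted without proof; for weak solutions with merely vanishing Monge--Amp\`ere measure this is the content of the paper's Lemma~\ref{Lem-segment}, established via Ghomi's smoothing and a measure-theoretic contradiction, and it does not follow from soft developability folklore.

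The paper closes uniqueness differently: using Lemma~\ref{Lem-segment} it proves a comparison principle (Lemma~\ref{Lem-comparison}) between any two weak solutions provided one of them has $C^1$ free boundary, via a sliding-support-plane argument. Since your $u^*=h_*$ has $C^{1,1}$ free boundary, applying that comparison twice (once in each direction) gives $v=h_*$. Your approach could be salvaged along the same lines: keep the explicit candidate and its regularity, but replace the flawed slope-matching (ii) by the segment lemma plus comparison.
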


\begin{remark}
The methods of our paper allow for the following (easy) generalization.
Instead of the hyperplane $\R^d\times \{h_0\}$, where the boundary of the surface was prescribed,
we can consider a hyperplane of the form
$$
\Sigma= \{ x\in \R^{d+1}: \ x_{d+1} = l_1 x_1 + ... + l_d x_d + a \}, \ \ l=(l_1,...,l_d)\in \R^d,
$$
and then let $\gamma-$the boundary of the surface (i.e. the boundary data of $u$ in \eqref{prob-main}),
to be equal to the intersection of the cylinder $\partial \Omega \times \R $  and $\Sigma$.
Here we should only require that $\Omega$ and $\Sigma$ are chosen so that $\gamma$
does not intersect the target hyperplane $T_0=\R^{d}\times \{0\}$.
Next, assuming the compatibility condition that $\sum\limits_{i=1}^d l_i^2 \leq \lambda_0^2$, i.e.
the target hyperplane can be seen from $\Sigma$ at a contact angle at least $\arccos (1+\lambda_0)^{-1/2}$,
we can still work out the details of our arguments.
\end{remark}

\begin{BBB}
Let $\Om$ be uniformly strictly convex and $C^2$ regular. Let 
$\psi(\xi)=\psi(|\xi|)$ be $C^\infty$ smooth nondecreasing positive function. Then, there is a universal constant 
$K = K(\lambda_0, \Omega, \psi)>0$ small such that if $K_0 \in (0, K)$ then \eqref{prob-main}
has a weak solution $u$, which is $C^\infty$ in $\{ u>0 \} \setminus \overline{\Omega}$ and the free boundary $\fb u$ is 
$C^{\infty}$ regular. 
\end{BBB}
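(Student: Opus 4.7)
The plan is to construct the solution by a Perron-type approach within the class $\W_+(K_0,\lambda_0,\Om)$ of super-solutions from Definition \ref{defn-Aleks}, and then upgrade its regularity by first proving strict concavity, then interior smoothness from Monge-Amp\`ere theory, and finally free boundary smoothness by means of the \emph{Blaschke extension} and a partial hodograph transform. The smallness of $K_0$ is used only to guarantee that the class of super-solutions is non-empty and well-controlled by the $K_0=0$ barrier constructed in Theorem~A.

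First I would verify that $\W_+(K_0,\lambda_0,\Om)$ is non-empty whenever $K_0$ is small enough. Let $u_0$ be the solution of \eqref{prob-main} for $K_0=0$ furnished by Theorem~A; since $u_0$ is concave, $C^{1,1}$, carries trivial Monge-Amp\`ere measure, and attains slope exactly $\lambda_0$ on $\fb{u_0}$, a mild vertical rescaling $(1+\e)u_0$ on a slightly enlarged positivity set produces a super-solution for all sufficiently small $K_0$: the Monge-Amp\`ere measure of $-(1+\e)u_0$ remains zero while the boundary slope becomes strictly larger than $\lambda_0$, creating room to absorb the $K_0\psi(|\na u|)$ right-hand side. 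Define then
\[
u(x)=\inf\{\,v(x): v\in \W_+(K_0,\lambda_0,\Om)\,\}.
\]
The infimum of concave functions bounded below by $0$ is concave, agrees with $h_0$ on $\p\Om$, and, by a standard comparison/lifting argument inside $\po{u}\setminus\overline\Om$ using the weak continuity of the measure $\mu_\psi$ recalled in subsection~\ref{subsec-weakSol}, satisfies $\int_{\omega_E(-u)}d\xi/\psi(\xi)=K_0|E|$ for every Borel $E\Subset\po u$. A Harnack/barrier argument against $u_0$ and against downward paraboloids (using strict convexity of $\Om$) forces the minimizing sequence to saturate the slope condition (c) on $\fb u$, exactly as in the classical Alt-Caffarelli construction.

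Next I would establish that $-u$ is strictly convex on $\po u \setminus\overline\Om$ and that $\po u$ is bounded. Boundedness follows because the Monge-Amp\`ere measure has density at least $K_0/\psi(\lambda_0)$ on the set where the slope does not yet exceed $\lambda_0$, so $u$ cannot extend flatly to infinity. Strict convexity of $-u$ is obtained from Caffarelli's strict-convexity theorem for solutions of $\det D^2 w = f$ with $f$ bounded between positive constants on compact subsets, combined with the non-vanishing trace $u=h_0$ on $\p\Om$ and the upper obstacle by the linear extreme support planes from subsection~\ref{subsec-FB}. Once strict convexity holds, Caffarelli's $C^{1,\alpha}$ theory upgrades $u$ to $C^{1,\alpha}$; the smoothness and monotonicity of $\psi$ then let Pogorelov's interior second-derivative estimate apply, and the equation becomes uniformly elliptic on compact subsets of $\po u\setminus\overline\Om$. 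Standard Evans-Krylov and Schauder bootstrap then give $u\in C^\infty(\po u\setminus\overline\Om)$.

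The main obstacle, as expected, is the $C^\infty$ regularity of the free boundary $\fb u$. Here I would invoke the Blaschke extension (defined earlier in the paper): near a free boundary point $x_0$ with inner unit normal $\nu_*$, reflect/extend $u$ across $\fb u$ by rolling the extreme support hyperplane, producing a convex function $\tilde u$ defined on a full neighborhood of $x_0$ whose level set $\{\tilde u=0\}$ contains an open portion of $\fb u$. Because the Blaschke extension is tailored to preserve the slope $\lambda_0$, $\tilde u$ satisfies a Monge-Amp\`ere equation with a $C^\infty$ right hand side in a full ball, and $|\na\tilde u|=\lambda_0$ on $\fb u$. This places us in the setting of a Bernoulli-type free boundary problem for a uniformly elliptic fully nonlinear operator once we pass to $-\tilde u$ in the tangential coordinates, and a partial hodograph transform along $\nu_*$ (exchanging one spatial variable with the corresponding component of $\na u$) turns $\fb u$ into a hyperplane and the Monge-Amp\`ere equation into a nonlinear elliptic equation on a half-ball with smooth, oblique boundary conditions. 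The uniform strict convexity and $C^2$ regularity of $\Om$, together with the strict convexity of $-u$, ensure the hodograph map is a local diffeomorphism and that the linearized problem is uniformly oblique. Kinderlehrer-Nirenberg-Spruck regularity then gives $\fb u\in C^{2,\alpha}$, after which Schauder bootstrap yields $C^\infty$. The delicate point is verifying that the Blaschke extension produces a solution whose Monge-Amp\`ere measure remains absolutely continuous and bounded across $\fb u$; this is where the smallness of $K_0$ re-enters, preventing the extended measure from concentrating on the free boundary itself.
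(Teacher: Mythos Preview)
There are two genuine gaps.

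First, your proposed super-solution $(1+\e)u_0$ does not belong to $\W_+(K_0,\lambda_0,\Om)$. The function $u_0$ from Theorem~A has vanishing Monge-Amp\`ere measure, and so does any positive multiple of it; hence $\det D^2(-(1+\e)u_0)=0<K_0\psi(|\na u|)$ everywhere, violating condition~(a) of Definition~\ref{defn-Aleks}. Having extra slope on the free boundary does not ``create room'' for a strictly positive right-hand side in the PDE inequality. The paper instead builds super-solutions as lower envelopes of paraboloids internally tangent to $\p\Om$ (Lemma~\ref{Lem-subsol-exists}): each paraboloid has constant Hessian determinant $(2\alpha)^d$, and $\alpha$ is chosen so that the equation inequality and the slope condition $\lambda_0$ hold simultaneously, which is exactly where \eqref{ineq-star} enters.

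Second, the Blaschke extension as constructed in the paper does \emph{not} give a solution of the Monge-Amp\`ere equation across $\fb u$; the reflected pieces produce only a super-solution (Corollary~\ref{pin}), so one cannot invoke interior regularity, a hodograph transform, or Kinderlehrer--Nirenberg--Spruck on your $\tilde u$. In the paper the Blaschke reflection is used solely to verify the free boundary condition $|\na u_*|=\lambda_0$ (Lemma~\ref{Lem-FB-cond-elliptic}), a step your outline dismisses with a vague ``Harnack/barrier'' remark. For $C^\infty$ regularity of $\fb{u_*}$ the paper proceeds quite differently: a compactness argument reducing to $K_0=0$ gives uniform $C^1$ regularity of $\fb{u_K}$ for small $K$ (Sublemma~\ref{sublem-C1}); the Legendre transform of $-u$ then satisfies an oblique boundary condition on the sphere $|z|=\lambda_0$, and Urbas's boundary $C^{1,1}$ estimates yield an exterior rolling ball for $\fb u$ (Lemma~\ref{lem-ext-ball}); finally a \emph{second} Perron construction, with $\Om$ replaced by $\mathrm{Hull}(\fb u)$, produces a genuine solution $U$ below $\R^d\times\{0\}$ matching both value and gradient on $\fb u$, so the glued function is a solution across $\fb u$ and Pogorelov's interior estimates apply. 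In particular, smallness of $K_0$ is not used only for non-emptiness of $\W_+$: it re-enters in the $C^1$ compactness step and, via the rolling-ball radius, in making the second Perron construction well-posed.
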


The smallness assumption on $K_0$ cannot be eliminated entirely as indicated by explicit computations for the radially symmetric solutions, see the Appendix.

\smallskip

We prove Theorem A in Section \ref{sec-homogen}, and Theorem B in Section \ref{sec-nonZero}.

\begin{remark}
When $\psi(t) =1$, in \eqref{prob-main} we get the equation for prescribed curvature measure,
and for $\psi(t) = (1+t^2)^{(d+2)/2}$ we recover the case of prescribed Gauss curvature.
In both cases condition \eqref{ineq-star}, which reads
$$
 K_0^{\frac 1d} \leq \psi^{-\frac 1d} (\lambda_0) \frac{  \kappa_0 \lambda_0^2 }{ h_0 \kappa_0 +  ( \lambda_0^2 + h_0^2 \kappa_0^2 )^{1/2} },
$$
where $\kappa_0$ is the smallest curvature of $\p \Om$,
is sufficient for the existence of a solution, and it is in the regularity
of the free boundary that we need to make $K_0 $ even smaller.
We also remark that for $\psi=1$ the case of equality in the above inequality
coincides, as one would expect, with the identity in \eqref{res1} concerning the radially symmetric solutions.

Computations similar to those we did in subsection \ref{subsec-comp}
show that for general parameters involved \eqref{prob-main} there can be no solution
for the prescribed Gauss curvature equation. The computations, while tedious,
present no difficulty, and we will leave it out for the interested reader to explore.

We also remark that the monotonicity of $\psi$ is being used to get a neat
formulation for the free boundary condition. One should be able
to treat more general cases for $\psi$, but here we do  not attempt to full generality, and rather
focus on model of the problem.
\end{remark}

\smallskip

\noindent \textbf{Notation.}
The following notation will be in force throughout the paper.

\smallskip

\begin{tabbing}
$C, C_0, C_n, ... $ \hspace{1.55cm}       \=\hbox{positive constants, that can vary from formula to formula}\\
$\partial U$        \>\hbox{the boundary of a set} $U$,\\
$\Sigma$        \>\hbox{the hyperplane } $\R^d\times\{h_0\} \subset \R^{d+1} $, \\
$\widehat\Omega$\> $({\Omega}\times\R) \cap \Sigma$,\\
$B(x, r)$ \>\hbox{the $d$-dimensional Euclidean ball of radius $r>0$ and center at $x$}, \\
$\Gamma_u= \{u>0\}\setminus \overline{\Om}$      \> \hbox{the  free boundary},\\
$\mathrm{Hull}(E)$      \> \hbox{the convex hull of a set $E\subset \R^d$},\\
$\nu$\> \hbox{the unit inner normal}, \\
$T_0$\> \hbox{the target manifold $\R^d\times\{0\}$,}\\
$\W_+(K_0, \lambda_0, d)$\> \hbox{the class of super-solutions, see Definition \ref{defn-Aleks}}.\\
%$\Omega^+(u):=\{u>0\}$ \> the positivity set of $u$,\\

%$S_0$\> the convex set such that $\p S_0=\Gamma$,\\
%$\po u$\> $S_0\setminus\overline{\Om}$\\
%$\omega_d$\> the volume of unit ball of $\R^d$,\\
%$\fint$ \> mean value integral, \\
%$T_0$\> the target manifold, $T_0=\{X\in\R^{d+1} \ : \ x_{d+1}=0\}$, \\
%$ \epigraph f$\> epigraph of a function $f:\R^d \to \R$, i.e. the set $ \{(x,t) \in \R^d \times \R:  \ t \geq f(x) \}$ 
%$\pi_i(X)$       \>\hbox{projection of }$X\in \R^{d+1}$ \hbox{in} $x_i$ \hbox{direction, e.g.} $\pi_{d+1}(X)=(x_1, \dots, x_d, 0)$,\\
%$\Omega^-(u):=\{u<0\}$ \> the negativity set of $u$,\\
%$Q_1$\> the unit cylinder $B_{\frac12}'\times{(-\frac12, \frac12)}$, where 
%$B'_{\frac12}$ is the ball of codimension 1,\\
%$\mathscr N$\> $x\in \fb{u}$ such that $\exists R>0$ and $u_R(x):=\frac{u(x_0+Ry)}{R}$is \\
%\>critically flat in the cylinder $\frac1{\sqrt 2} Q_1$,\\
%flat in the set of non-flat free boundary points, see Definition \ref{def-N-set},\\
%$\mathscr F$ \> $\fb{u}\setminus \mathscr N$,\\
%$\lambda(u)$\>$\lambda_+^p\,\chi_{\{u>0\}} +\lambda_-^p\,\chi_{\{u\le 0\}}$,\\
%r \>  b, \\
%$\Lambda, \Lambda_0$ \> $\Lambda=\lambda_+^p-\lambda^p_-$ and $\Lambda_0=\frac{\Lambda}{p-1}$ the Bernoulli constants.
\end{tabbing}

\subsection{Related works}\label{subsec-related}
We present a short review of the theory of $K$-surfaces. The literature quoted below is by no means exhaustive, 
but is selected, slightly arbitrarily, with a hope to give the reader a flavour of the breadth and scopes of this 
area of research.

One may roughly categorise the results on existence of $K$-surfaces in two groups;
namely, those where the sought-for $K$-surface is globally a graph of a function over some (nice) domain,
and those where the surface is not necessarily a graph globally (this setting allows immersed surfaces too, in particular).
The main advantage of the first scenario, is the availability of a global coordinate system,
where the existence of the surface in question, can be reformulated in terms of some boundary value problem
for Monge-Amp\`ere equation. Then, having some nice control over the geometry of the domain where the problem is posed,
the original geometric problem is being treated as a question in the existence theory for Monge-Amp\`ere equations.
In this direction, the results of Caffarelli, Nirenberg, and Spruck \cite{CNS-CPAM}, show that if $\Omega\subset \R^d$ is a bounded, smooth and a strictly convex domain,
then for any smooth function $\varphi$ over $\partial \Omega$, if $K>0$ is small enough, then there is a $K$-surface spanning the graph of $\varphi$.
This existence result was extended by Hoffman, Rosenberg, and Spruck \cite{HRS} to the case of boundaries with multiple components,
where among other results, it was proved that given any two strictly convex curves lying in two parallel hyperplanes,
and such that one of the curves can be moved inside the other by a translation,
then for a sufficiently small $K>0$ there exists a $K$-surface having these curves as its boundary.
Both of these papers treat the existence of $K$-surfaces primarily through the existence theory of Monge-Amp\`ere equations.
Later on, more general settings were treated by Guan and Spruck \cite{Guan-Spruck}, and Guan \cite{Guan}.
One of the results from \cite{Guan-Spruck}, which is particularly striking, proves the existence of locally convex
embedded $K$-surfaces with arbitrary high genus. This remarkable result shows that the class of $K$-surfaces can be very complicated in spite of 
strict conditions on its geometry. 

We now move to the second category of the existence results, where the surface is not necessarily a graph.
In this direction, the paper of Guan and Spruck \cite{Guan-Spruck-JDG}, establishes a rather
general existence result. Namely, assuming the existence of a locally convex immersed and $C^2$-smooth hypersurface
$\Sigma \subset \R^{d+1}$, which is locally strictly convex along its boundary, it is proved
that for any positive $K$ which nowhere exceeds the curvature of $\Sigma$, there exists an up to the boundary smooth,
locally strictly convex immersed hypersurface $M$ having $\partial \Sigma$ as its boundary and having curvature $K$
everywhere.
This result was also independently obtained by Trudinger and Wang \cite{TW}.
Finally, let us quote the important paper by Ghomi \cite{Ghomi01} which brings direct geometrical and topological ideas
into the existence theory of $K$-surfaces.

In the last part of our review we note that the all cases discussed above,
treated the case of strictly positive curvature.
For the vanishing curvature case, which comprises another important class of problems,
it is proved by Guan and Spruck \cite{Guan-Spruck-JDG} that if $\gamma$
bounds a locally strictly convex codimension 2 hypersurface, which is $C^2$-smooth, there exists a locally convex $C^{1,1}$-smooth hypersurface $M$ having $\gamma$ as its boundary.

\subsection{Optimal  transport with free boundary between annuli}

The problem \eqref{prob-main} can be regarded as the analogue of Alt-Caffarelli problem 
\cite{AC} for the Monge-Amp\`ere equation. 
 
Another interesting  interpretation can be given in terms of optimal mass transport theory. Suppose $\psi=1$, $0\in\Om$ and let $w=u+\frac12 |x|^2$. Then 
$-w$ is a potential (modulo a constant summand) defining the transport map $y=x-\na w$ which maps the annulus bounded by $\fb u$ and $\p \Om$
to the one bounded by $\p B(0, \lambda_0)$ and $\na u(\fb u)$, see Figure \ref{Fig-ring}.

Moreover, $w$ solves the following overdetermined problem for the unknown $w$: 
\begin{equation}\label{MA-AC}
\begin{cases}  
\mathrm{det} (-D^2 w+\hbox{Id})=K_0 , &\text{in $\left( \R^d \setminus \overline{\Omega} \right)$}\cap \{w>\frac12|x|^2\}, \\ 
 w=h_0+\frac12|x|^2 ,&\text{on $ \partial \Omega    $}, \\
 |\nabla w-x| =\lambda_0, &\text{on $w=\frac12|x|^2$}. 
 \end{cases}
\end{equation}

\begin{figure}[htb]
\centering \def\svgwidth{300pt}
\input{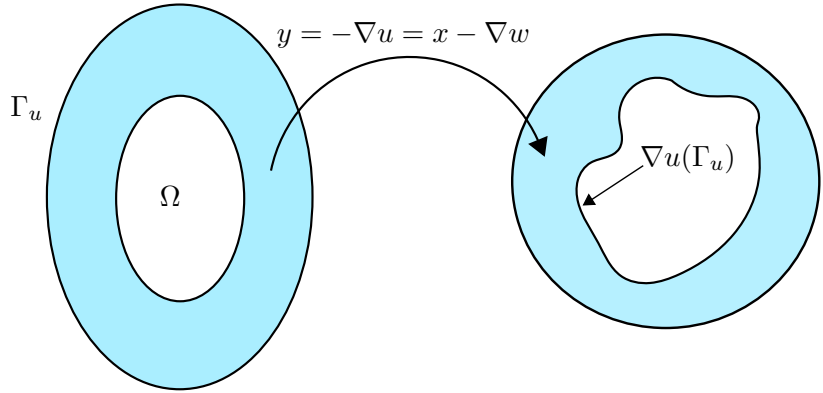}
\caption{\footnotesize The annular domains having one free and one prescribed component represent the reference and target sets under the transport map $y=-\na u$.   }
\label{Fig-ring}
\end{figure}

 The total mass of transport $m$ is also unknown and must be determined from $w$ by the formula
 \[m:=\hbox{Vol}\left(\po u\setminus \Om\right)=\frac1{K_0}\hbox{Vol}\left(\na(\po u\setminus \Om)\right).\]
To the best of our knowledge the study of these type of problems does not appear in the existing literature.

%-------------------------------------
%    SECTION
%-------------------------------------
\section{The homogeneous problem}\label{sec-homogen}
In this section we prove Theorem A which concerns \eqref{prob-main} for $K_0=0$,
and this assumption will be in force throughout the section in any reference to \eqref{prob-main},
unless explicitly stated otherwise. The existence of a weak solution is proved in
Proposition \ref{prop-exist}, and the qualitative properties of weak solution are proved in 
subsection \ref{subsec-reg-uniq}. Moreover, the formula \eqref{h-star-def} gives an explicit
characterisation of this unique solution.

%The global $C^{1,1}(\Omega)$ regularity of  $u$ is settled in \cite{GTW}. We shall use some of 
%methods from this paper to tackle the regularity of the free boundary $\fb u$.

%For the pure degenerate case see \cite{ibero}, the study of exterior problem
%for Monge-Amp\`ere equation was initiated in \cite{C-Li}.

We start with a simple example, which is meant to illustrate how the solution to (\ref{prob-main})
looks like in the simplest case.

\begin{example}\label{ex-cones}{\normalfont(Truncated cones, cf. Figure \ref{Fig-cone})}
Let $\Omega$ be the unit ball of $\R^d$ $(d\geq 2)$ and fix $\lambda>0$.
Consider the problem \eqref{prob-main} with $K_0=0$, gradient condition $\lambda>0$, and boundary data identically 1.
Then, we can easily see (by direct computation)
that the function $u(x) = 1+\lambda - \lambda |x|$, with $1\leq |x| \leq 1+1/ \lambda$
defines a solution to \eqref{prob-main} which is concave on its positivity set (with a convention that $u$
is extended as identically 1 in $\Omega$). Clearly the free boundary will be the circle $|x|  = 1+1/\lambda $ 
and the free boundary condition in \eqref{prob-main} will be satisfied at all points.
Note that the graph of $u$ is a boundary of a solid truncated cone.

What we can also observe from this example, is that at any point of the free boundary,
there is a unique support plane to the graph of $u$ having slope $\lambda$. More precisely,
for any $M = (x_1, x_2,...,x_d)$ on the free boundary, the unit normal $\nu\in \mathbb{S}^d$ of this hyperplane has the form
$$
\nu = \frac{1}{(1+\lambda^2 )^{1/2}} \left(  - \frac{\lambda^2}{1+\lambda} x_1 , ..., - \frac{\lambda^2}{1+\lambda} x_d ,  -1  \right) \in \mathbb{S}^{d}.
$$

It is also clear, that for any ball $B(x_0, r)$, where $x_0\in \R^d$ and $r>0$, we can translate and scale $u$
constructed for $B(0,1)$ to get
a conical solution for a general ball too.
Namely, setting
$$
v(x) = 1+\lambda - \frac{\lambda}{r} | x-x_0|, \ \ \text{ where } r\leq |x-x_0| \leq r \left( 1+ \frac{1}{\lambda} \right),
$$
we get a solution to \eqref{prob-main} for $\Omega=B(x_0,r)$, $K_0 =0$, and the given $\lambda>0$.
The free boundary in this case will be the sphere $|x-x_0| = r\left(1+\frac{1}{\lambda} \right)$.

Finally, observe that a similar construction provides a convex solution to \eqref{prob-main}
with the same data, however with the free boundary contained inside $\Omega$.
\end{example}

\subsection{The case of polygonal domains}
The idea of the proof of Theorem A, is to first understand the case when $\Omega$ is a polyhedron,
and then use approximation by polyhedral domains to settle the general case.
In this section we study (\ref{prob-main}) when the domain $\Omega $ is a convex polyhedron,
i.e. a bounded convex domain which is an intersection of some finite number of halfspaces.
For the sake of construction and notation we identify $\Omega \subset \R^d$ with
$\Omega \times \{0\}\subset \R^{d+1}$.

%------------------ Proposition on polyhedron
\begin{prop}\label{Prop-polyhedron}
In \eqref{prob-main} let $\Om $ be a convex polyhedron. Then there is a generalized 
solution to \eqref{prob-main}.
\end{prop}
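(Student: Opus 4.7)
The plan is to write down an explicit candidate for $u$ built from the distance function to $\Om$, and then verify each of the three requirements in Definition \ref{defn-Aleks}. This construction is the natural polyhedral generalisation of the truncated-cone solutions from Example \ref{ex-cones}.

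\textbf{Step 1: the candidate.} For $x\in \R^d\setminus \Om$ define
\[ u(x) \ := \ \max\bigl\{ h_0 - \lambda_0\, \operatorname{dist}(x,\Om),\ 0 \bigr\}. \]
Since $\Om$ is convex, $\operatorname{dist}(\cdot,\Om)$ is convex and $1$-Lipschitz on $\R^d$, so $u$ is concave and continuous on $\R^d\setminus\Om$, equals $h_0$ on $\p\Om$, and its positivity set is exactly the outer $h_0/\lambda_0$-neighbourhood of $\Om$.

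\textbf{Step 2: the ruled structure.} Because $\Om$ is a convex polyhedron, $\R^d\setminus\Om$ admits the outer normal-cone decomposition into cells
\[ C_F \ := \ \bigl\{ x\in\R^d\setminus\Om :\ \Proj_\Om(x)\in \mathrm{relint}\, F \bigr\}, \]
indexed by the proper faces $F$ of $\p\Om$, with $\operatorname{dist}(x,\Om)=|x-\Proj_F(x)|$ on $C_F$. Hence $u$ is affine on the prism $C_F$ sitting over a facet $F$, and over a face of higher codimension the graph of $u$ is a piece of a generalised cone (a cylinder over a codimension-$2$ face, a radial cone over a vertex, etc.), exactly as in Example \ref{ex-cones}. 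These pieces agree on common interfaces and assemble into a concave ruled surface.

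\textbf{Step 3: verification.} For Definition \ref{defn-Aleks}(a) with $K_0=0$: on each cell $C_F$ the function $-u$ is smooth with $|\nabla(-u)|\equiv\lambda_0$, and at interfaces its Aleksandrov subdifferential is the convex hull of the adjacent gradient values. Consequently the gradient image $\omega_E(-u)$ of any Borel set $E\subset \po u\setminus\overline{\Om}$ is contained in the sphere $\{|p|=\lambda_0\}$ together with finitely many chords joining consecutive outward face normals, and so has Lebesgue measure zero; condition (a) follows. Condition (b) holds by construction. For (c), let $x_0\in \fb u$ be a point where $\fb u$ has a well-defined inner unit normal $\nu$; any such $x_0$ lies over the relative interior of some face of $\p\Om$, and for small $t>0$
\[ u(x_0+t\nu)\ =\ \lambda_0\, t, \]
so $\p_\nu u(x_0)=\lambda_0$; by the discussion of extreme support hyperplanes in Section \ref{subsec-FB} this is precisely the statement that the smallest slope among extreme support hyperplanes at $x_0$ equals $\lambda_0$.

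\textbf{Main obstacle.} The delicate points are both geometric. First, the singular locus of $\fb u$ sits over the lower-dimensional faces of $\p\Om$, and there the graph of $u$ carries conical corners with a full one-parameter family of support hyperplanes; however, this singular locus is a finite union of pieces of codimension at least one inside $\fb u$, hence carries no surface measure, so the ``almost everywhere'' reading of (c) at the end of Section \ref{subsec-FB} makes the calculation above sufficient. Second, the global concavity of $u$ across the interfaces between distinct normal-cone cells requires a line to confirm, but follows immediately from the convexity of $\operatorname{dist}(\cdot,\Om)$ together with the fact that truncation with $0$ preserves concavity of a nonnegative concave function.
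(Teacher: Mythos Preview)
Your approach is essentially correct and yields a genuinely different (and simpler) proof than the paper's, but two points need fixing. First, drop the truncation: $\max\{h_0-\lambda_0\operatorname{dist}(\cdot,\Om),0\}$ is \emph{not} concave on $\R^d\setminus\Om$ (take a segment from a point in $\{u>0\}$ to a far-away point where the truncated $u$ vanishes), and your justification that ``truncation with $0$ preserves concavity of a nonnegative concave function'' does not apply since $h_0-\lambda_0\operatorname{dist}$ changes sign. Simply set $u=h_0-\lambda_0\operatorname{dist}(\cdot,\Om)$ without truncation; this is concave on all of $\R^d$, and Definition~\ref{defn-Aleks} only refers to $\{u>0\}$ and $\fb u$ anyway. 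Second, the distance function to a closed convex set is $C^1$ on its complement, with $\nabla\operatorname{dist}(x,\Om)=(x-\Proj_\Om x)/|x-\Proj_\Om x|$. Hence your $u$ is already $C^1$ on $\R^d\setminus\overline\Om$ with $|\nabla u|\equiv\lambda_0$, the gradient image of any Borel set lies on the sphere $\{|p|=\lambda_0\}$, and the discussion of subdifferentials and ``chords'' at interfaces is unnecessary.

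The paper takes a different route: it defines $u$ as the minimum of the finitely many affine functions of slope $\lambda_0$ passing through the \emph{facets} of $\widehat\Om$ only. This produces a piecewise-linear solution with polyhedral free boundary, and the bulk of the paper's argument is spent showing that the convex polytope bounded by this graph has no vertices in the open slab $0<x_{d+1}<h_0$ (otherwise the Monge--Amp\`ere measure would be positive there). Your solution instead has rounded spherical caps over the lower-dimensional faces of $\Om$ and a $C^{1,1}$ free boundary; the vanishing of the Monge--Amp\`ere measure is immediate from $|\nabla u|\equiv\lambda_0$. The two solutions genuinely differ (e.g.\ at $(2,2)$ for $\Om=[-1,1]^2$, with $h_0=\lambda_0=1$), which is consistent with uniqueness being claimed only for $C^{1,1}$ domains (Lemma~\ref{Lem-uniq-0case}). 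Your route is the more elementary one; indeed the identity $h_*=h_0-\lambda_0\operatorname{dist}(\cdot,\Om)$ holds for any $C^1$ convex $\Om$ and gives the solution of Proposition~\ref{prop-exist} directly without polyhedral approximation. The paper's piecewise-linear envelope is tailored to feed into the monotone scheme of Proposition~\ref{prop-exist}, but your formula could replace that step as well.
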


\begin{proof}
Set $\Sigma = \{ X\in \R^{d+1}: \ x_{d+1}= h_0 \}$, and
by $F_1,...,F_n$ denote the facets of $\Omega$, i.e. the ($d-1$)-dimensional flat portions of $\partial \Omega$.
For the orthogonal projection operator in the $(d+1)$-th direction
$$
\pi_{d+1}(X)=(x_1,...,x_d,0), \text{ where } X=(x_1,...,x_{d+1}) \in \R^{d+1}
$$
denote $\widehat{\Omega} = \pi_{d+1}^{-1}(\Omega) \cap \Sigma$, i.e. the lift of $\Omega$ into $\Sigma$.
Similarly, for each $1\leq i \leq n$ define $G_i = \pi_{d+1}^{-1}(F_i) \cap \Sigma$.
It is clear that $\widehat{\Omega}$ is a convex polyhedron in $\Sigma$ with facets $G_1,...,G_n$.

For $1\leq i \leq n$ consider a hyperplane $H_i=\{X\in \R^{d+1}: \ X \cdot \nu^{(i)} = c_i \}$ in $\R^{d+1}$, where 
$\nu^{(i)}$ is the unit normal and $c_i\in \R$. We choose $H_i$ so that it passes through the facet $G_i$,
has unit normal $\nu^{(i)}$ satisfying 
\begin{equation}\label{nu-d+1}
\nu^{(i)}_{d+1} = (1+\lambda_0^2)^{-1/2}
\end{equation}
and for its graph 
$$
u_i(x)= \frac{1}{\nu^{(i)}_{d+1} } \big[ c_i  - (\nu^{(i)}_1 x_1 +...+\nu^{(i)}_d x_d)  \big], \qquad x\in \R^d
$$
we have
$$
\Omega \subset \{(x,0) \in \R^d\times \{0\}: \ u_i(x) \geq h_0 \}.
$$
It is clear that such $H_i$ exists. Since $|\nu^{(i)}|=1$
from (\ref{nu-d+1}) we get
\begin{equation}\label{grad-u-i}
|\nabla u_i(x)  | =  \frac{\left(1- (\nu_{d+1}^{(i)} )^2 \right)^{1/2}}{\nu_{d+1}^{(i)}} =  \lambda_0 \text{ on } \R^d.
\end{equation}

We now define
$$
\mathcal{U}  = \bigcap\limits_{i=1}^n  \{ x\in \R^d: \ u_i(x) \geq 0   \} \setminus \Omega.
$$
Observe that $\mathcal U$ defines a bounded set in $\R^d$. For $x\in \mathcal{U}$ consider the lower envelope
\begin{equation}\label{u-min-def}
u(x) = \min\limits_{1\leq i \leq n} u_i(x).
\end{equation}
We thus have that $u$ is a piecewise linear function.
Moreover, it follows by (\ref{grad-u-i}) that 
$|\nabla u(x) | = \lambda_0 $ on $\partial \{x\in \R^d: u(x)>0  \} \setminus \overline{\Omega}$ away from a set of zero $\mathcal{H}^{d-1}  $-measure
(namely, away from the non-smooth boundary of the polyhedron $\{x\in \R^d:  \ u(x) =0 \} )$.
We also have $u= h_0 $ on $\partial \Omega$ by construction. 
Thus it is only left to prove that $u$ is a solution to the equation.
To this end define $\mathcal{K}$ to be the body bounded by the graph of $u$ and hyperplanes $x_{d+1}=0$ and $\Sigma$.
More precisely,
$$
\mathcal{K}= \{ (x,t) \in \R^d\times \R: \ x\in \overline{\mathcal{U}},  \ 0\leq t \leq u(x)  \} \cup \{ (x,t) \in \R^d\times \R: x\in \overline{\Omega}, \ 0\leq t \leq h_0  \}.
$$
By construction $\mathcal{K}$ is a convex polyhedron in $\R^{d+1}$.
Since $u$ is non-smooth, to show $\det D^2(-u)=0$ in $\{u>0\} \setminus \overline{\Omega}$,
we need to prove that the corresponding Monge-Amp\`ere measure is vanishing (see subsection \ref{subsec-weakSol}),
or equivalently  that the gradient image of $u$ on its positivity set has measure 0.
To see this, observe that at points where $u$ is smooth, its gradient mapping assumes only finitely many values,
which are precisely the gradients of $u_i$-s considered above.
We are thus left to treat the points where $u$ is not differentiable.
It is clear that $u$ is non-smooth at some $x \in \mathcal{U}$ iff $X=(x,u(x))$ lies on the
non-smooth boundary of the polyhedron $\mathcal{K}$. But the non-smooth boundary of
the polyhedron is the union of its $k$-dimensional edges, where $k=0,...,d-1$.
Along an edge $\mathcal{E}$ of dimension $k\geq 1$, the gradient mapping of $u$ is the same,
namely for any $x, y$ in the interior of  $\mathcal{E}$, we have $\omega_x(u) = \omega_y(u)$.
Hence, any $\alpha\in \omega_\mathcal{E}(u)$ lies in the gradient image of at least two different points,
and therefore by the celebrated result of Aleksandrov (see subsection \ref{subsec-weakSol})
we get that $|\omega_\mathcal{E}(u)|=0$. As a result we see that in order to complete the proof,
we need to show that $\mathcal{K}$ has no edges of dimension 0 (i.e. vertices)
in the strip $0<x_{d+1}<h_0$. In what follows we prove this statement and hence the proposition.
For the sake of clarity we split the proof into few steps.

\vspace{0.1cm}
\textbf{Step 1.} \emph{Vertex of $\mathcal{K}$ and a ray through it.}
\vspace{0.1cm}

Assume for contradiction, that $X^{(0)}=(x^{(0)},u(x^{(0)}))\in \R^{d+1}$ is a point on the graph of $u$
satisfying $0<X^{(0)} \cdot e_{d+1} < h_0$ and such that there is no line on the graph of $u$ passing through $X^{(0)}$. Then
there must be at least $d+1$ hyperplanes $H_i$ having a unique intersection point at $X^{(0)}$.
Consequently, the system of linear equations
\begin{equation}\label{unique-X0}
(X-X^{(0)})\cdot \nu^{(i)} =0 \ \ \text{ for all } \ i=1,...,d+1,
\end{equation} has a unique solution,
hence the collection $\{\nu^{(1)},...,\nu^{(d+1)}\}$ is linearly independent in $\R^{d+1}$.
This implies, that any $d$ out of these $d+1$ hyperplanes intersect in a line (e.g. by rank-nullity theorem).
We claim that at least one of these $d+1$ lines must intersect $\Sigma$.
Indeed, if not the case, then all $d+1 $ lines must be parallel to $\Sigma$,
and hence if $l_i$ is the direction vector of the $i$-th line,
then $l_i \cdot e_{d+1} =0$ for all $1\leq i \leq d+1$, where $e_{d+1}$ is the normal vector of $\Sigma$.
We have that $\{\nu^{(i)}\}_{i=1}^{d+1}$ is linearly independent,
and $e_{d+1}$ is a non-zero vector. Hence, we may replace one of $\nu^{(i)}$-s by $e_{d+1}$
and still get a collection of $(d+1)$-linearly independent vectors.
With this in mind, assume that $\{ \nu^{(1)},...,\nu^{(d)}, e_{d+1} \}$
is linearly independent, and let the line $L_1$ be the intersection of $H_1$,...,$H_{d}$.
By assumption, for $l_1$, the direction vector of $L_1$, we have $l_1 \cdot \nu^{(i)} =0$
for all $1\leq i \leq d$ since $L_1 \subset H_i$.
As $L_1$ does not intersect $\Sigma$, it follows that $l_1 \cdot e_{d+1} =0$.
We get that $l_1$ is orthogonal to $d+1$ linearly independent vectors, which is a contradiction
since $l_1$ is non-zero.
Hence, we conclude that at least one of the lines through $X^{(0)}$
formed as an intersection of $d$ planes must intersect $\Sigma$.
Let $L_0$ be the ray on this line intersecting $\Sigma$.

\vspace{0.1cm}
\textbf{Step 2.} \emph{Projection of $L_0$.}
\vspace{0.1cm}

Recall that each $H_i$ passes through the facet $G_i$ by construction.
At this stage we identify $G_i$ with its bounding $(d-1)$-dimensional hyperplane in $\Sigma$;
clearly $G_i = H_i \cap \Sigma$. Let $\widehat{L}_0$ be the orthogonal projection of $L_0$ onto $\Sigma$.
We claim that each point on $\widehat{L}_0$ is equidistant from all $G_1,..., G_d$.
To see this, we first observe that the equation of $H_i$ gives
\begin{equation}\label{G-i-plane}
G_i = \{ (x, h_0)\in \R^d \times \R : \ x \cdot \overline{\nu^{(i)}} - 
\overline{X^{(0)}} \cdot \overline{\nu^{(i)}}	 + (h_0-X^{(0)}_{d+1} )  \nu^{(i)}_{d+1} =0   \},
\end{equation}
where for $X=(x_1,...,x_{d+1})\in \R^{d+1} $ we have set $\overline{X} = (x_1,...,x_d) $. 
Now if $Z = (z,z_{d+1}) \in L_0$ is any, then for its projection $\widehat{Z} = (z, h_0) \in \widehat{L}_0$
and for each $1\leq i \leq d$ we have
\begin{equation}\label{dist-from-G-i}
 \mathrm{dist}( \widehat{Z}, G_i) = \frac{ | z \cdot \overline{\nu^{(i)}} - \overline{X^{(0)}} \cdot \overline{\nu}^{(i)} + (h_0-X_{d+1}^{(0)}) \nu_{d+1}^{(0)} |  }{ |\overline{\nu^{(i)}} | } = 
 \frac{|h_0 - z_{d+1}| |\nu_{d+1}^{(i)}| }{|\overline{\nu^{(i)}} |},
\end{equation}
where the second equality is due to the fact that $Z \in L_0 \subset H_i$ for any $1\leq i \leq d$, and condition (\ref{nu-d+1}). 
Applying (\ref{nu-d+1}) to (\ref{dist-from-G-i}) we get that $\mathrm{dist}( \widehat{Z}, G_i)$ is independent of $1\leq i \leq d$
for each given $Z\in L_0$.

\vspace{0.1cm}
\textbf{Step 3.} \emph{Round cones touching the facets.}
\vspace{0.1cm}

Let $Y_0 $ be the intersection of $L_0$ and $\Sigma$. By independence of $\nu^{(1)},...,\nu^{(d)}$,
the point $Y_0$ forms a vertex of the polyhedron $\widehat{\Omega}$. In particular we get that $\widehat{L_0} \cap int(\widehat{\Omega}) \neq \emptyset$.
Take $\widehat{Z} \in  \widehat{L_0} \cap int(\widehat{\Omega}) $ close to $Y_0$. Then by Step 2
we have that the ball $B_Z: = B( \widehat{Z}, \mathrm{dist}( \widehat{Z}, G_1) )  $ touches facets $G_1,...,G_d$ tangentially. 
We next consider a round cone $\mathcal{C}_Z$ which has the ball $B_Z$
as its base, and has vertex at $Z\in L_0$ which has $\widehat{Z}$ as its projection.
In view of the construction we have that the hyperplanes $H_1,...,H_d$ are support planes for $\mathcal{C}_Z$
all intersecting the boundary of the cone in line segments.
Moving the point $Z$ along the ray $L_0$,  we will get eventually, for some $Z $, that the ball $B_Z$ touches $G_{d+1}$ as well, 
thanks to the convexity of the region bounded by facets $G_1,...,G_{d+1}$.
Hence, the cone $\mathcal{C}_Z$ at that position will have $H_{d+1}$ as its support plane too,
as being a round cone of slope $\lambda_0$, it equals the lower envelope of its all support hyperplanes of slope $\lambda_0$.
But then, the ray $L_0$ has to intersect $H_{d+1}$ above $\Sigma$, producing a new intersection point of $H_1,...,H_{d+1}$.
The latter contradicts to the uniqueness of $X^{(0)}$, and hence completes the proof 
of the proposition.
\end{proof}

\begin{remark}
It is worthwhile to observe, 
that in the case when the all $d+1$ rays intersect $\Sigma$ (Step 1 in the proof), a different argument handles the proof above.
Indeed, in such a case we get that the facets $G_1$,...,$G_{d+1}$ have the property that any $d$ of them intersect in
a single point. Moreover all these $d + 1$ points are different, as if any two were the same,
that point would lie in the intersection of the hyperplanes $H_i$ but which we know contains
a single point, which is $X^{(0)}$. These properties imply that $\widehat{\Omega}$ is a $d$-simplex.
But a $d$-dimensional simplex has exactly $d + 1$ number of different facets of
dimension $d - 1$; we thus get that the collection $\{G_1, ...,G_{d+1}\}$ exhausts the all facets of $\widehat{\Omega}$.
Now the contradiction follows easily since there must be at least one $H_i$ whose graph has
value larger than $h_0$ at $X^{(0)}$. But by assumption all $H_i$-s are strictly less than $h_0$ at $X^{(0)}$ and
we get a contradiction, by so completing the argument.
\end{remark}

%-------------------------------------
%    sub SECTION
%-------------------------------------
\subsection{Approximation argument}

Here we study the case of general convex domain. Let $\Omega$ be a bounded convex domain with $C^1$ boundary.
Thanks to the smoothness of $\partial \Omega$
for each $X_0\in \partial \widehat{\Omega} $ there exists a unique
support hyperplane to $\widehat{\Omega}$ at $X_0$ having slope $\lambda_0$;
call this plane $H_{X_0}$. Following the discussion in subsection \ref{subsec-FB}, each $H_{X_0}$
can be identified with a linear function over $\R^d$. 
With this in mind, consider the lower envelope of these support hyperplanes
\begin{equation}\label{h-star-def}
h_*(x) : = \inf\limits_{X_0 \in \partial \widehat{\Omega} } H_{X_0}(x), \qquad x\in \R^d.
\end{equation}
The infimum here does not collapse, thanks to the condition on the slopes of support planes,
and hence \eqref{h-star-def} defines a function locally bounded below on $\R^d$.
Indeed, let us see that the zero set of any $H_{X_0}$ stays on a uniform distance from $\Omega$.
Similar to the computations in the proof of Proposition \ref{Prop-polyhedron}, we can identify each $H_{X_0}$ having unit normal $\nu\in \R^{d+1}$
with a linear function $H(x) = - \frac{1}{\nu_{d+1}} [ x\cdot \overline{\nu} + X_0 \cdot \nu ]  $, where $x\in \R^d$,
and $\nu=(\overline{\nu}, \nu_{d+1})$ with $\nu_{d+1} = - (1+\lambda_0^2)^{1/2}$.
Then, if $x\in \overline{\Omega}$ we have $H(x) \geq h_0$ and for any $y\in \R^d$ where $H(y) =0 $ we get
$$
h_0 (1+\lambda_0^2)^{1/2} \leq (x- y) \cdot \overline{\nu} \leq |x-y| \hspace{0.02cm} |\overline{\nu}|,
$$
and hence, $h_*$ gives a locally bounded function which is concave as a lower envelope of concave functions.
We have the following result.

\begin{prop}\label{prop-exist}{\normalfont(Existence of a solution)}
Let $\Omega$ be a bounded convex domain in $\R^d$ with $C^1$ boundary.
Then $h_*$  is a weak solution to \eqref{prob-main}.
\end{prop}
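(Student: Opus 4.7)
The plan is to reduce verification of conditions (a)--(c) of Definition~\ref{defn-Aleks} to a single explicit formula for $h_*$. First I would rewrite each support plane $H_{X_0}$ concretely. Given $X_0\in\p\widehat\Om$, write $X_0=(x_0,h_0)$ with $x_0\in\p\Om$; by the $C^1$-regularity of $\p\Om$ there is a continuous inner unit normal field $\hat e(x_0)\in\R^d$ to $\Om$. Combined with the slope-length constraint $\lambda_0$ discussed in subsection~\ref{subsec-FB}, this pins down the affine function
\[
H_{X_0}(x)=h_0+\lambda_0\,\hat e(x_0)\cdot(x-x_0).
\]
Substituting into \eqref{h-star-def} and applying the identity
\begin{equation}\label{dist-id-plan}
\inf_{x_0\in\p\Om}\hat e(x_0)\cdot(y-x_0)=-\mathrm{dist}(y,\Om),\qquad y\in\R^d\setminus\Om,
\end{equation}
yields the closed form
\[
h_*(y)=h_0-\lambda_0\,\mathrm{dist}(y,\Om),\qquad y\in\R^d\setminus\Om.
\]

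From this formula most of the verification is immediate. The function $\mathrm{dist}(\cdot,\Om)$ is convex, so $h_*$ is concave; and $h_*=h_0$ on $\p\Om$, giving (b). The positivity set $\po{h_*}\setminus\overline\Om$ coincides with the shell $\{0<\mathrm{dist}(y,\Om)<h_0/\lambda_0\}$, and $\fb{h_*}$ is the outer parallel hypersurface of $\p\Om$ at distance $h_0/\lambda_0$. Since $\Om$ is convex and $\p\Om$ is $C^1$, the nearest-point projection $y\mapsto x_0^*(y)$ onto $\p\Om$ is single-valued and continuous on $\R^d\setminus\Om$, so $h_*$ is $C^1$ on the shell (up to and including $\fb{h_*}$) with
\[
\nabla h_*(y)=\lambda_0\,\hat e(x_0^*(y)),\qquad|\nabla h_*(y)|\equiv\lambda_0.
\]
Condition (a) then follows at once: for any Borel $E\subset\po{h_*}\setminus\overline\Om$ the gradient image $\omega_E(-h_*)$ sits inside the sphere $\lambda_0\mathbb{S}^{d-1}$, which has Lebesgue measure zero in $\R^d$, so $|\omega_E(-h_*)|=0=K_0|E|$.

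For condition (c), fix $y_0\in\fb{h_*}$ at which the free boundary has a well-defined inner normal $\nu_*$. Differentiating the defining equation $\mathrm{dist}(y,\Om)=h_0/\lambda_0$ of $\fb{h_*}$ identifies $\nu_*=\hat e(x_0^*(y_0))$, whence $\nabla h_*(y_0)=\lambda_0\nu_*$. Since $h_*$ is concave and $C^1$ at $y_0$, the tangent-plane inequality gives $h_*(x)\leq\lambda_0\nu_*\cdot(x-y_0)$ everywhere, so the slope $\lambda_0$ is admissible. Conversely any plane $\lambda\nu_*\cdot(x-y_0)$ with $\lambda<\lambda_0$ already fails along $x=y_0+t\nu_*$ for small $t>0$, since $h_*(y_0+t\nu_*)=\lambda_0 t+o(t)>\lambda t$. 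Hence the smallest admissible slope equals exactly $\lambda_0$.

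The most delicate step will be the identity \eqref{dist-id-plan}. I would establish it by a two-case argument at each $x_0\in\p\Om$. If $y$ lies in the supporting half-space $\{\hat e(x_0)\cdot(x-x_0)\geq 0\}$ (which contains $\Om$), then the left-hand side is non-negative and the inequality is trivial. Otherwise $|\hat e(x_0)\cdot(y-x_0)|$ equals the perpendicular distance from $y$ to the support hyperplane at $x_0$, and since $\Om$ lies entirely on the far side of that hyperplane, this distance is bounded above by $\mathrm{dist}(y,\Om)$; rearranging gives $\hat e(x_0)\cdot(y-x_0)\geq-\mathrm{dist}(y,\Om)$. Equality is attained at the unique nearest-point projection $x_0^*(y)$, where $y-x_0^*$ is antiparallel to $\hat e(x_0^*)$ by convexity of $\Om$ and $C^1$-regularity of $\p\Om$; this proves \eqref{dist-id-plan} and produces the explicit formula for $h_*$ that drives the rest of the argument.
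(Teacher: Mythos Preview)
Your proof is correct and takes a genuinely different route from the paper's. The paper argues by polyhedral approximation: it fixes a dense sequence on $\partial\Omega$, forms the circumscribed polytopes $\Omega_n$, invokes Proposition~\ref{Prop-polyhedron} to obtain polyhedral solutions $u_n$, and passes to the limit using monotonicity, uniform Lipschitz bounds, and the weak continuity of the Monge--Amp\`ere measure; only at the end does it identify the limit $u_0$ with $h_*$ via a density argument. You instead recognise at the outset that, for $C^1$ convex $\Omega$, the envelope \eqref{h-star-def} collapses to the closed form $h_*=h_0-\lambda_0\,\mathrm{dist}(\cdot,\Omega)$ on $\R^d\setminus\Omega$, and then read off all three conditions of Definition~\ref{defn-Aleks} from elementary properties of the distance function to a convex body: the metric projection is single-valued and Lipschitz, so $h_*$ is $C^1$ with $|\nabla h_*|\equiv\lambda_0$, which forces the gradient image into the sphere $\lambda_0\mathbb S^{d-1}$ (hence the Monge--Amp\`ere measure vanishes) and pins down the free-boundary slope.

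Your approach is shorter and more transparent in this constant-height, $K_0=0$ setting, and it makes the later $C^{1,1}$ claims of Lemma~\ref{Lem-C11} essentially immediate through the known regularity of the distance to a $C^{1,1}$ convex body. The paper's approximation route is less direct here, but it supplies the template---monotone envelopes plus weak continuity of the Monge--Amp\`ere measure---that is reused for the paraboloid super-solutions in Section~\ref{sec-nonZero}, and it does not rely on the boundary values being identically $h_0$ (cf.\ the remark following Theorem~A on tilted hyperplanes~$\Sigma$), where your distance-function identity would no longer be available.
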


\begin{proof}
We will approximate $h_*$ by polyhedral solutions, and will use the weak continuity of Monge-Amp\`ere measure.
To this end, fix  a sequence  $\{x^{(n)} \}_{n=1}^\infty \subset \partial \Omega $
forming a set of dense distinct points in $\partial \Omega$.
For each integer $n \geq n_0$, where $n_0\in \mathbb{N}$ is large enough, we let $\Omega_n$ be the convex polyhedron
bounded by tangent hyperplanes of $\Omega$ at points $\{x^{(1)},..., x^{(n)} \}$;
obviously $\Omega \subset \Omega_n$. Define $g_n:\partial \Omega_n \to \R$ by $g_n(x) = \pi_{d+1}^{-1}(x)\cap \Sigma$
where $x\in \partial \Omega_n$.
We now let $u_n$ be the solution of (\ref{prob-main}) constructed in Proposition \ref{Prop-polyhedron} for $\Omega_n$
and $g_n$. For notational convenience
we extend each $u_n$ into $\Omega_n$ having the same graph as the hyperplane $\Sigma$,
in this case as identically equal to constant $h_0$; this extension is also denoted by $u_n$. 
Observe as well, that by (\ref{u-min-def}) each $u_n$ is defined everywhere on $\R^d$.
Finally, after relabelling we assume that $n_0=1$.

By (\ref{u-min-def}) we have
\begin{equation}\label{u-n-decrease}
 u_1(x)  \geq ... \geq u_n(x) \geq ... \geq h_*(x) \qquad x\in \R^d.
\end{equation}
Since $u_n$ is decreasing and bounded below, it converges to some function $u_0$ defined on $\R^d$,
which is clearly concave.
We claim that the convergence is uniform in the set $\mathcal{U}_1:=  \{u_1\geq -1\}$;
this will follow from a well-known fact that convex functions are locally Lipschitz,
with Lipschitz constant admitting a bound by $L^\infty$ norm of the function.

Since the sequence $\{u_n\}$ is uniformly bounded on $\mathcal{U}_1$,
by \cite[Lemma 3.1]{Bak}
the set of gradient images of $\{u_n\}$ will be uniformly bounded on the set $\{ u_1 \geq 0 \}$,
which contains the sets $\{u_n \geq 0\}$ by (\ref{u-n-decrease}).
This in its turn implies uniform Lipschitz bounds on the sequence $\{u_n\}$ on the set $\{u_1\geq 0\}$ by \cite[Proposition 2.4]{Ra-Ta}.
Hence, Arzel\`{a}-Ascoli implies existence of a subsequence $\{n_k\}$ along which $u_{n_k}$ converges to $u_0$ uniformly
on the set $\{u_0 \geq 0\}$. But then, the monotonicity of $\{u_n\}$ implies uniform convergence of the entire sequence on $\{u_0 \geq 0 \}$.

From weak convergence of the Monge-Amp\`ere measure it follows that 
$|\omega_E(-u_0)|=0$ for any Borel $E\subset \{u_0>0\} \setminus \overline{\Omega}$, hence $u_0$ is a generalized solution
to $\det D^2 u=0$ in $\{u>0\} \setminus \overline{\Omega}$. We also get that $u_0= h_0$ on $\partial \Omega$
as a result of the construction.

We next verify the free boundary condition for $u_0$.
To accomplish this, fix any $z_0\in \fb{u_0}$
where the $(d-1)$-dimensional convex set $\fb{u_0}$ has a tangent hyperplane.
We only need to check the condition for such points on the boundary.
There is a sequence $z_n \in \fb {u_n}$
converging to $z_0$. In view of concavity of $u_n$ we may assume without loss of generality that
each  $z_n$ is a regular point for $\fb{u_n}$,
and hence the graph of $u_n$ will have a unique support hyperplane with slope $\lambda_0 $; let $H_n$
be this hyperplane. Up to passing to a subsequence, we may assume that unit normals of $H_n$ converge to some unit vector $\nu_0 \in \mathbb{S}^d$.
Let $H_0$ be the hyperplane of $\R^{d+1}$ through $z_0$ and with unit normal $\nu_0$. By construction we get that $H_0$ is the unique support hyperplane
to the graph of $u_0$ at $z_0$. 
Since all hyperplanes $H_n$ are tangent to the graph of $u_n$ and have slope $\lambda_0$,
we obtain that $H_0$ is tangent to the graph of $u_0$ at $z_0$ and has slope $\lambda_0$,
and hence the normal derivative of $u_0$ at $z_0$ equals $\lambda_0$,
which is the free boundary condition at $z_0$.

Finally, to complete the proof, it is left to show that $u_0 = h_*$
on the set $\omega:= \{h_*>0\}\setminus \Omega$.
To see this, let $\{X_n\}_{n=1}^\infty \subset \partial \widehat{\Omega}$ be the dense set of points
fixed above, for which we have $u_0 = \inf_n H_{X_n}$.
Then, clearly we get $h_*(x) \leq u_0(x) $ for all $x\in \omega$.
Now if for some $y_0 \in \R^d\setminus \Omega$ we get $h_*(y_0) = 0 $,
then there is a sequence of support hyperplanes $H_{Y_n}$ such that $H_{Y_n}(y_0) \to c_0 \leq 0$.
Due to the density of $X_n$, up to passing to a subsequence, we may assume that $|X_n - Y_n| \to 0$
hence $|H_{X_n}(y_0) - H_{Y_n}(y_0)| \to 0$, which follows from the fact that at each point of $\partial \widehat{\Omega}$
the support hyperplane with slope $\lambda_0$ is unique by $C^1$-smoothness of $\partial \Omega$.
We thus get $u_0(y_0)=0$ and conclude that $h_* = u_0$ on $\omega$.
The proof of the proposition is complete.
\end{proof}

\begin{remark}
For $\Omega$ without $C^1$ smoothness assumption, the proof of the previous proposition gives the
existence of a solution to \eqref{prob-main}, however we cannot claim that the solution obtained from polyhedral approximation must coincide with $h_*$.
\end{remark}

Our next result will be used to establish uniqueness of solutions to \eqref{prob-main}.

\begin{lem}\label{Lem-segment}{\normalfont{(Segments in the graph)}}
Let $u$ be any weak solution to \eqref{prob-main} which is concave.
Then, for any point $X^{(0)}=(x^0, u(x^0) ) \in \R^{d+1}$ on the graph of $u_0$,
there exists a line segment through $X^{(0)}$ lying entirely on the graph of $u$ 
and having endpoints at the free boundary and on $\partial \widehat{\Omega}$.
\end{lem}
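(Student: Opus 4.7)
The plan is to represent $X^{(0)}$ as a two-point convex combination, one point on the top ring $\partial\widehat\Omega$ and one on the bottom ring $\Gamma_u\times\{0\}$, extracted from a suitable face of the convex body spanned by the graph of $u$. I would first form the convex body
$\mathcal K = \{(x,t):x\in\overline\Omega,\ 0\le t\le h_0\}\cup\{(x,t):x\in\overline{\{u>0\}},\ 0\le t\le u(x)\}$,
exactly as in the proof of Proposition \ref{Prop-polyhedron}. Combining the concavity of $u$ with the bound $u\le h_0$ (the maximum principle for the concave function $u$ with $u=h_0$ on $\partial\Omega$ and $u=0$ on $\Gamma_u$), $\mathcal K$ is a convex body in $\R^{d+1}$ whose boundary decomposes into a top face $\widehat\Omega$, a bottom face at height $0$, and a lateral face coinciding with the graph of $u$.

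Next I would pick a supporting hyperplane $H=\{t=L(x)\}$ of $\mathcal K$ at $X^{(0)}$ with $L$ affine and non-constant; for $X^{(0)}$ in the relative interior of the lateral face this is automatic, while on a ring one uses the non-horizontal tangent of the lateral face along that ring. Set $F = H\cap\mathcal K$ and $P_F = \{L=\widetilde u\}\subset\R^d$, where $\widetilde u$ is the natural extension of $u$ by $h_0$ on $\overline\Omega$; both are convex and $L\ge\widetilde u$. Since $L$ is affine with $L\ge h_0$ on $\overline\Omega$ and $L\ge 0$ on $\overline{\{u>0\}}$, and since the minimum of a non-constant affine function on a convex body is attained only on its boundary, one obtains $F\cap\{t=h_0\}\subset\partial\widehat\Omega$ and $F\cap\{t=0\}\subset\Gamma_u\times\{0\}$.

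The crucial step is to show that every extreme point of $F$ (equivalently every extreme point of $\mathcal K$ lying in $F$) sits on one of the two rings. Interior points of the top and bottom faces are trivially non-extreme. For a point $(y,u(y))$ in the relative interior of the lateral face, I would invoke the ruling theorem for convex functions with vanishing Monge--Amp\`ere measure (cf.\ \cite{Gut-b}) on any small convex ball $B\subset\{u>0\}\setminus\overline\Omega$ around $y$: since $\det D^2(-u)=0$ on $B$, the graph of $-u$ contains a line segment of affinity through $(y,-u(y))$, which exhibits $(y,u(y))$ as the midpoint of two distinct points of $\mathcal K$ and hence as non-extreme. Then Carath\'eodory gives $X^{(0)}=\sum_i\lambda_i Y_i$ for finitely many extreme points $Y_i$ of $F$, each at height $0$ or $h_0$; equating vertical coordinates forces $\lambda_\top = u(x^0)/h_0\in(0,1)$ (for interior $X^{(0)}$), so both partial barycenters $P_\top = \lambda_\top^{-1}\sum_{\mathrm{top}}\lambda_i Y_i$ and $P_\bot = \lambda_\bot^{-1}\sum_{\mathrm{bot}}\lambda_i Y_i$ are well-defined and by the previous step lie in $\partial\widehat\Omega$ and $\Gamma_u\times\{0\}$ respectively. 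The segment $[P_\bot,P_\top]\subset F$ passes through $X^{(0)}$, has intermediate heights in $(0,h_0)$ and therefore sits on the lateral face (the graph of $u$), and has the required endpoints. Boundary points $X^{(0)}\in\partial\widehat\Omega$ or $\Gamma_u\times\{0\}$ reduce to the same picture with $X^{(0)}$ itself serving as one endpoint.

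The main obstacle is the ruling step: the vanishing of the Monge--Amp\`ere measure holds only on the annular (hence non-convex) domain $\{u>0\}\setminus\overline\Omega$, so the classical ruling theorem cannot be applied globally. Localizing to a small convex neighbourhood of each positivity-set point is what salvages the argument, and it is precisely enough to exclude interior extreme points on the lateral face, which is all the PDE is used for in this proof.
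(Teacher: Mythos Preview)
Your approach is correct and genuinely different from the paper's. The paper argues directly with the contact set $\mathcal X=\mathrm{Hull}(\Pi\cap\mathcal M)$ of a single supporting hyperplane: assuming $\mathcal X$ misses the $0$-level (say), it encloses $\mathcal K$ in a polytope with $\Pi$ as a facet, then invokes Ghomi's optimal smoothing theorem \cite{Ghomi04} to replace the polytope by a smooth strictly convex body $D_\e$ touching the graph exactly along $\mathcal X$; comparing gradient images yields positive Monge--Amp\`ere measure, a contradiction. Your route instead structures the face $F$ via Krein--Milman/Carath\'eodory and pushes all extreme points onto the two rings by a \emph{local} ruling argument. Two remarks on the comparison. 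First, your approach is more elementary: it avoids the external smoothing theorem entirely, and the Carath\'eodory barycenter trick (splitting into $P_{\mathrm{top}}$ and $P_{\mathrm{bot}}$) handles cleanly the passage from ``$F$ meets both levels'' to ``a segment through $X^{(0)}$ with endpoints on both levels'', a step the paper sweeps under ``by concavity''. Second, your ruling step deserves a sharper justification than a bare citation: what you actually need is that through each interior point $y$ the contact set of some subgradient contains $y$ in its \emph{relative interior} (otherwise $(y,u(y))$ could still be extreme). This follows, for instance, from the convex--envelope characterisation of solutions to $\mu_{-u}=0$ on a ball, or more directly via Straszewicz's theorem: an interior extreme point of $\mathcal K$ would be a limit of interior exposed points, and an exposed point has singleton contact set, which forces the subdifferential image of a small ball to contain an open set, contradicting $\mu_{-u}=0$. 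Finally, your treatment of the boundary case (``$X^{(0)}$ itself serving as one endpoint'') is too brief as written---when $u(x^0)\in\{0,h_0\}$ the barycenter split degenerates---but this is easily repaired by the same limiting argument the paper uses.
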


\begin{proof}
Let $\mathcal{M}$ be the graph of $u$. We will first prove the lemma when $X^{(0)}$
is in the interior of $\mathcal{M}$, i.e. $0<u(x^0)<h_0$. 
In this case, let $\Pi$ be any support hyperplane of $\mathcal{M}$ at $X^{(0)}$,
and consider the convex hull  $\mathcal{X} = \mathrm{Hull} (\Pi \cap \mathcal{M})$.
Since $\Pi$ is a support plane and $u$ is concave, we get that $\mathcal{X}$
is a compact and convex set lying on  the graph of $u$.
To prove the lemma it is enough to see that $\mathcal{X}$ intersects the $h_0$- and $0$-level surfaces of $u$,
since then the statement of this lemma would follow by concavity of $u$.
Now assume for contradiction that $\mathcal{X}$ stays on a positive distance from 0-level surface of $u$, 
or equivalently from $\R^d\times\{0\}$. 
We enclose the convex body bounded by $\mathcal{M}$ and hyperplanes $x_{d+1}=0$ and $x_{d+1}=h_0$
by a convex polytope  having one of its facets lying on the support plane $\Pi$.
Moreover, we choose this polytope so that its boundary intersects $\mathcal{M}$ by $\mathcal{X}$ only.
For $\e>0$ small, applying \cite[Theorem 1.1]{Ghomi04} due to M. Ghomi
gives a convex body $D_\e$ with $C^\infty$ boundary such that
$\partial D_\e \cap \mathcal{M} = \mathcal{X} $,  $\partial D_\e \setminus \mathcal{X}  $ has positive curvature,
and the Hausdorff distance of $D$ and the fixed polytope does not exceed $\e$.
By choosing $\e>0$ small enough and relying on the fact that $\Pi$ is a support plane 
we can ensure that $\partial D_\e$ stays in between $\Pi$ and $\mathcal{M}$.
Consequently, in the neighbourhood of $X^{(0)}$ the gradient image of the hypersurface $\partial D_\e$ will be contained
in the gradient image of $\mathcal{M}$. But this is a contradiction,
since the latter has measure zero due to the equation, while the former has positive measure,
in view of the strict convexity of $\partial D_\e$ outside $\mathcal{X}$.
This contradiction proves the lemma when $X^{(0)}$ is in the interior of the graph.

To complete the proof we are left to cover the case when $X^{(0)}$ is on the free boundary or on $\partial \widehat{\Omega}$.
Assume the former, and fix a sequence of points $X^{(n)}$, $n\in \mathbb{N}$, in the interior of $\mathcal{M}$
converging to $X^{(0)}$. As we have proved already, for each $n$ there is a support plane $\Pi_n$ 
through $X^{(n)}$ intersecting the 0-level surface at some $A_n$ and the 1-level surface at some $B_n$.
Extracting a convergent subsequence, we may assume that the normal vectors $\nu_n$ of $\Pi_n$ converge to some $\nu_0$, and also that  $A_n \to A_0$, $B_n \to B_0$
where $u(A_0)=0$ and $u(B_0)=h_0$.
By compactness we get that the $A_ 0 =X_0$ and the hyperplane through $X^{(0)}$ and having $\nu_0$ as its normal
is a support plane of $\mathcal{M}$. Again, due to compactness we get that $B_0\in \Pi_0$,
hence by concavity of $u$ we have that the line segment $[X^{(0)}, B_0]$ lies on the graph of $u$.

The proof of the lemma is complete.
\end{proof}

We next establish a comparison principle with solutions having $C^1$ free boundaries or $C^1$ graphs.

\begin{lem}\label{Lem-comparison}\normalfont{(Comparison principle)}
Let $\Omega_1 \subset \Omega_2$ be two convex domains,
and let the concave function $u_i$ be a weak solution to (\ref{prob-main}) corresponding to $\Omega_i$, $i=1,2$.
Define $\omega_i$ to be the closure of the convex hull of $\Gamma_i: = \fb {u_i} $, for $i=1,2$.
We have the following:

\begin{itemize}
 \item[{\normalfont{(a)}}] if either of $\Gamma_i$ is $C^1$, then $\omega_1 \subset \omega_2$,
 \smallskip
 \item[{\normalfont{(b)}}] if either of $u_i$ is $C^1$ regular in $ \{ u_i>0\} \setminus \overline{\Omega}$, then one has
 $U_1\leq U_2$ on $\R^d$ where $U_i$ stands for the extension of $u_i$ into $\Omega_i$ as identically $h_0$.
\end{itemize}

\end{lem}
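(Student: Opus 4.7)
The plan is to reduce both parts to a one-dimensional convexity argument along the segments furnished by Lemma \ref{Lem-segment}. The guiding principle is that on the horizontal projection of any such graph segment, the weak solution $u_i$ is affine by construction, so its difference with any concave competitor is convex on that projection, and its sign is fully controlled by the two endpoint values.

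For part (b), argue by contradiction: suppose $U_1(z_0) > U_2(z_0)$ at some $z_0 \in \R^d$. Since $\overline{\Omega_1} \subset \overline{\Omega_2}$ and $U_i \equiv h_0$ on $\overline{\Omega_i}$, we have $U_1 = U_2 = h_0$ on $\overline{\Omega_1}$; consequently $z_0 \notin \overline{\Omega_1}$ and $u_1(z_0) = U_1(z_0) > 0$. Apply Lemma \ref{Lem-segment} to $u_1$ at the graph point $(z_0, u_1(z_0))$ to obtain a segment lying on the graph of $u_1$ whose endpoints project to $p \in \partial \Omega_1$ and $q \in \Gamma_1$, respectively. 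On the projected segment $\sigma = [p,q] \subset \R^d$, $U_1$ is affine (its graph over $\sigma$ is the line segment just produced), while $U_2$ is concave as the restriction of the globally concave extension $U_2$ to a line. Hence $U_1 - U_2$ is convex on $\sigma$. At $p \in \partial\Omega_1 \subset \overline{\Omega_2}$ one has $U_1 = h_0 = U_2$; at $q \in \Gamma_1$ one has $U_1 = 0 \leq U_2$. A convex function on a compact interval attains its maximum at the endpoints, so $U_1 - U_2 \leq 0$ on $\sigma$, contradicting $U_1(z_0) > U_2(z_0)$.

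Part (a) is handled by a parallel argument. If $\omega_1 \not\subset \omega_2$, then since $\omega_1$ is the closure of the convex hull of $\Gamma_1$ and contains $\overline{\Omega_1} \subset \overline{\Omega_2} \subset \omega_2$, one must have $\Gamma_1 \not\subset \overline{\omega_2}$. The $C^1$ regularity of $\Gamma_1$ lets us select a point $z_0 \in \Gamma_1 \setminus \overline{\omega_2}$ at which $\Gamma_1$ is smooth, and there $U_2$ vanishes on an open neighbourhood. Applying Lemma \ref{Lem-segment} at $(z_0, 0)$ yields a segment on the graph of $u_1$ joining $(z_0,0)$ to some $(p, h_0)$ with $p \in \partial\Omega_1 \subset \overline{\Omega_2}$. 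On the projected segment $\sigma = [z_0, p]$, $u_1$ grows affinely from $0$ to $h_0$, so $U_1 > 0 = U_2$ on a sub-interval adjacent to $z_0$. Yet the same affine-versus-concave convexity argument gives that $U_1 - U_2$ is convex on $\sigma$ with both endpoint values equal to $0$, forcing $U_1 - U_2 \leq 0$ throughout; contradiction.

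The main technical obstacle is justifying that the extension $U_2$ is concave on (a neighbourhood of) the relevant projected segments, which reduces to proving $u_i \leq h_0$ on $\R^d \setminus \Omega_i$. This bound follows by taking a ray from a putative maximum point of $u_i$ into $\Omega_i$ and applying one-dimensional concavity together with the boundary value $u_i = h_0$ on $\partial \Omega_i$. The $C^1$ hypotheses in (a) and (b) enter precisely in smoothing out the endpoint selection and the identification of the unique support hyperplane that Lemma \ref{Lem-segment} pairs with the chosen graph point.
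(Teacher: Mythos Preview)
Your argument has a genuine gap: it never invokes the free boundary condition $|\nabla u_i| = \lambda_0$ on $\Gamma_i$, and without it the conclusion is simply false. Take $\Omega_1 = B(0,1) \subset \Omega_2 = B(0,2)$, $h_0 = 1$, and conical solutions $u_1(x) = 1.1 - 0.1|x|$, $u_2(x) = 3 - |x|$ (slopes $0.1$ and $1$). Both solve $\det D^2(-u) = 0$ with the right boundary values, yet $\omega_1 = \overline{B(0,11)} \not\subset \omega_2 = \overline{B(0,3)}$ and $U_1 \not\le U_2$. Your convexity-on-a-segment argument applies verbatim to this pair, so it cannot be correct.

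The specific failures are the endpoint claims. In part (b) you assert $U_1(q) = 0 \le U_2(q)$ at $q \in \Gamma_1$; but nothing prevents $q$ from lying outside $\omega_2$, where $U_2(q) < 0$. In part (a) you say ``$U_2$ vanishes on an open neighbourhood'' of $z_0 \in \Gamma_1 \setminus \overline{\omega_2}$ and that ``both endpoint values equal $0$''; in fact $U_2(z_0) = u_2(z_0) < 0$ there, so $U_1(z_0) - U_2(z_0) > 0$, and the convexity bound yields nothing. Your final paragraph gestures at the $C^1$ hypothesis fixing things, but the $C^1$ assumption is about regularity, not about the slope value $\lambda_0$, so it cannot supply the missing ingredient.

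The paper's proof is genuinely different: it works with support hyperplanes of slope exactly $\lambda_0$. One takes a regular point $X_2 \in \Gamma_2$ in the interior of $\omega_1$, lets $H_2$ be the support plane of slope $\lambda_0$ to the graph of $u_2$ at $(X_2,0)$ (this is where the free boundary condition enters), and translates $H_2$ outward until it first touches $\Gamma_1$ at some $X_1$. By $C^1$ regularity of $\Gamma_1$ this translated plane $H_1$ is the unique slope-$\lambda_0$ support plane at $X_1$; since $H_2$ already lies on the far side of $\widehat\Omega_2 \supset \widehat\Omega_1$, so does $H_1$. But Lemma~\ref{Lem-segment} forces a segment on the graph of $u_1$ from $X_1$ to $\partial\widehat\Omega_1$ lying in $H_1$, a contradiction. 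The slope constraint is what makes the translated plane comparable across the two solutions; this is precisely what your affine-versus-concave argument lacks.
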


\begin{proof}
We start with part (a). It is enough to prove the claim
assuming that $\Gamma_1$ is $C^1$, since the other case follows a similar analysis.
Assume for contradiction that $\omega_1$ is not inside $\omega_2$.
Then, there is a point $X_2\in \Gamma_2$ such that $u_1(X_2)>0$,
i.e. $X_2$ is in the interior of $\omega_1$.
Since $ \Gamma_2$ is convex its normal exists almost everywhere,
and without loss of generality we will assume that at $X_2$ the set 
$ \Gamma_2$ has a unit outward normal, call it $\nu_2$.
Thanks to its smoothness at $X_2$ the set $\Gamma_2$  has a unique support
hyperplane at $X_2$, call it $G_2\subset \R^d\times \{0\}$
and hence, the graph of $u_2$ has a unique support plane with slope $\lambda_0$ at $X_2$;
which we denote by $H_2\subset \R^{d+1}$. 

Since $X_2$ is in the interior of $\omega_1$ translating $G_2$ and $H_2$
in the direction of $\nu_2$ we will get a point $X_1\in \Gamma_1$ 
where the shifted copies of $G_2$ and $H_2$, denoted respectively by $G_1$ and $H_1$,
form a support hyperplanes to $\Gamma_1$ and the graph of $u_1$.
Due to $C^1$ smoothness of $\Gamma_1$ we get that $G_1$ and $H_1$ are the only hyperplanes with those properties at $X_1$.
Now recall that $H_2$ supports the graph of $u_2$, hence it has $\widehat{\Omega}_1$ on one side of it.
But then, since $H_1$ is parallel to $H_2$ and is on the halfspace determined by $H_1 $ and not containing $\Omega_1$,
we get that $H_1$ cannot intersect $\widehat{\Omega}_1$. The latter contradicts the statement of Lemma \ref{Lem-segment}
as there should be a segment in the intersection of $H_1$ with the graph
of $u_1$ joining the free boundary with the boundary of $\widehat{\Omega}_1$.
The contradiction completes the proof of part (a).

The proof of part (b) follows from a similar idea, where we use support hyperplanes of the graph, and rely on the $C^1$-smoothness of one of the solutions.
Thus, the proof of the lemma is complete.
\end{proof}

\begin{figure}[htb]
\centering \def\svgwidth{300pt}
\input{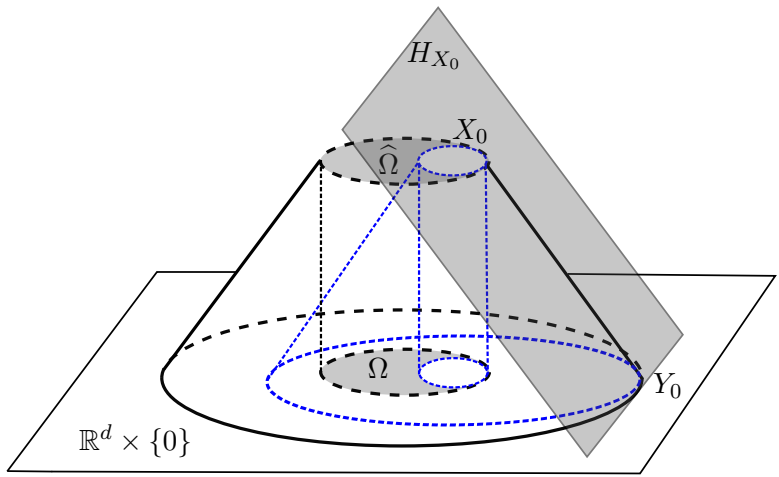}
\caption{\footnotesize{A schematic view of the construction of the conical barrier
from below. We take a ball $B\subset \Omega$ touching $\partial \Omega$
and following Example \ref{ex-cones} construct the conical solution $u_{X_0}$ having slope $\lambda_0$,
depicted in blue. The construction enforces $u_{X_0}$ and the solution $u$
to share the same supporting hyperplane at $X_0$ having slope $\lambda_0$.
This hyperplane, which is coloured in gray, intersects a line segment from the 
graphs of both $u$ and $u_{X_0}$. At the endpoint of this shared segment, denoted by $Y_0$,
the free boundary of the cone $u_{X_0}$ touches the free boundary of $u$.}}
\label{Fig-cone}
\end{figure}

\subsection{Regularity and geometry of the solution and the free boundary}\label{subsec-reg-uniq}

In this section we study uniqueness and qualitative properties of weak solutions
 to \eqref{prob-main} for $K_0=0$.

\begin{lem}\label{Lem-C11}
Let $\Omega$ be a bounded convex domain with $C^{1,1}$-smooth boundary,
and let $h_*$ be defined as in \eqref{h-star-def}. Then,
$\fb{h_*} $ is $C^{1,1}$-regular, and $h^* \in C^{1,1} (\R^d\setminus \overline{\Omega})$.
\end{lem}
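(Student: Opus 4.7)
My plan is to derive an explicit formula for $h_*$ and then appeal to standard regularity facts for the distance function to a convex $C^{1,1}$ body. First, using that $\partial\Omega$ is $C^1$, at every $x_0 \in \partial\Omega$ the outward unit normal $n(x_0)$ is uniquely defined, and the unique support hyperplane of slope $\lambda_0$ to $\widehat{\Omega}$ at $X_0 = (x_0, h_0)$ corresponds to the affine function
\begin{equation*}
H_{X_0}(x) = h_0 - \lambda_0\, n(x_0) \cdot (x - x_0), \qquad x \in \R^d.
\end{equation*}
Substituting into \eqref{h-star-def} gives $h_*(x) = h_0 - \lambda_0 \sup_{x_0 \in \partial\Omega} n(x_0) \cdot (x - x_0)$.

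The next step is to identify the supremum with $\mathrm{dist}(x, \Omega)$. For $x \notin \Omega$, let $\pi(x)$ denote the unique nearest point of $\overline\Omega$ to $x$; then $x - \pi(x) = \mathrm{dist}(x, \Omega)\, n(\pi(x))$, so the value $\mathrm{dist}(x, \Omega)$ is attained at $x_0 = \pi(x)$. For any other $x_0 \in \partial\Omega$, the supporting hyperplane property of convex $\Omega$ gives $(\pi(x) - x_0) \cdot n(x_0) \leq 0$, and by Cauchy--Schwarz
\[
(x - x_0) \cdot n(x_0) \leq (x - \pi(x)) \cdot n(x_0) \leq \mathrm{dist}(x, \Omega).
\]
Hence $h_*(x) = h_0 - \lambda_0\, \mathrm{dist}(x, \Omega)$ on $\R^d \setminus \Omega$, and in particular
\[
\Gamma_{h_*} = \{\, x \in \R^d : \mathrm{dist}(x, \Omega) = h_0/\lambda_0 \,\}
\]
is the outer parallel surface of $\partial\Omega$ at distance $h_0/\lambda_0$.

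The regularity statements now follow from classical properties of convex $C^{1,1}$ bodies. The projection $\pi:\R^d \to \overline\Omega$ is $1$-Lipschitz by convexity of $\overline\Omega$, and the Gauss map $n:\partial\Omega \to \mathbb{S}^{d-1}$ is Lipschitz by the $C^{1,1}$ hypothesis. Therefore $\nabla \mathrm{dist}(\cdot, \Omega)(x) = n(\pi(x))$ is Lipschitz on $\R^d \setminus \overline\Omega$, so $\mathrm{dist}(\cdot, \Omega) \in C^{1,1}(\R^d \setminus \overline\Omega)$, whence $h_* \in C^{1,1}(\R^d \setminus \overline\Omega)$. Since $|\nabla \mathrm{dist}(\cdot, \Omega)| \equiv 1$ outside $\overline\Omega$, the implicit function theorem applied to the $C^{1,1}$ function $\mathrm{dist}(\cdot, \Omega)$ yields that the level set $\Gamma_{h_*}$ is a $C^{1,1}$ hypersurface.

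The main obstacle is really confined to the first step, the identification $h_* = h_0 - \lambda_0\, \mathrm{dist}(\cdot, \Omega)$, which requires exploiting both the $C^1$-regularity of $\partial\Omega$ (so that the support-hyperplane envelope \eqref{h-star-def} is parametrized by a smoothly varying normal) and the convexity of $\Omega$ (so that the unique nearest-point projection realizes the supremum). Once this identification is made, everything else is a routine consequence of the Lipschitz regularity of the projection and of the Gauss map for convex $C^{1,1}$ bodies.
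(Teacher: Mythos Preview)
Your proof is correct and takes a genuinely different, more direct route than the paper. You identify $h_*$ explicitly as $h_0 - \lambda_0\,\mathrm{dist}(\cdot,\Omega)$ and then invoke standard regularity of the distance function to a convex $C^{1,1}$ body; the paper instead argues geometrically, using the interior ball condition of $\partial\Omega$ to build conical barriers (Example~\ref{ex-cones}) which, via the comparison principle of Lemma~\ref{Lem-comparison}, share a line segment with the graph of $h_*$ and provide an inner touching ball of uniform radius at every free boundary point. Your approach is shorter and yields the explicit formula for $h_*$ (and hence also the outer rolling ball radius in Lemma~\ref{Lem-reg-FB-0case}) as an immediate byproduct; the paper's barrier argument, on the other hand, is self-contained within the framework already developed (Lemmas~\ref{Lem-segment} and~\ref{Lem-comparison}) and does not appeal to exterior facts about the distance function, so it transports more naturally to the later sections where no such explicit formula is available.
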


\begin{proof}
We start with the regularity of the free boundary. 
To this end, relying on $C^{1,1}$ regularity of $\Omega$ we fix $r_0>0$ such that
for any $X\in \partial \Omega$ there is a ball $B= B(Z, r_0)$ where $Z\in \Omega$
with the property that $B$ touches $\partial \Omega$ at $X_0$ and stays completely inside $\Omega$.
Now let $u_{X_0}$ be the conical solution corresponding to $B$
constructed in Example \ref{ex-cones}, with the same parameters $h_0$ and $\lambda_0$.
The free boundary of $u_{X_0}$ is a ball by construction, and $u \in C^{1,1} (\R^d \setminus \overline{B}) $.
For $H_{X_0}$ - the support hyperplane of $\widehat{ \Omega}$ at $X_0$ having slope $\lambda_0$,
we know that it intersects the graph of $u_{X_0}$ by a line segment, call it $I=[X_0, Y_0]$
with $Y_0\in \partial \{u_{X_0} >0 \}\setminus \overline{B}$ (see Figure \ref{Fig-cone}).
Relying on the smoothness of $u_{X_0}$ and applying the comparison principle of Lemma \ref{Lem-comparison}
we see
that
$$
H_{X_0} = u_{X_0} \leq h_* \leq H_{X_0} \qquad \text{on the interval } I,
$$
hence the segment $I$ lies on the graph of $h_*$. Moreover, at the point $Y_0$ of the free boundary of $h_*$
we get a touching ball by construction. Moving $X_0$ on $\partial \widehat{\Omega}$
we conclude that the graph of $h_*$ consists of line segments cut out from the cones,
in particular we will cover all free boundary points of $h_*$.
This shows that the free boundary $\fb {h_*} $ is $C^{1,1}$ regular.

\smallskip

A similar argument shows that $h_*$ is $C^{1,1}$ in the interior.
Indeed, the argument above shows that all level surfaces of $h_*$ are $C^{1,1}$ uniformly.
Fix any $x_0$ where $h_*(x_0)>0$, and consider the line segment $I$ lying on the graph of $h_*$ and passing through $X_0:=(x_0, h_* (x_0))$.
At each point of the segment $I$ there is a ball $B$ with radius independent of the point,
such that $B$ lies in $ \{  x \in \R^d: \ h_*(x) =  h_*(x_0)  \}  $. Since the radii of these balls admit a uniform lower bound,
sliding the balls across $I$, we get a cylinder lying completely inside $\{  h_* >0\}$.
In particular there is a $(d+1)$-dimensional ball touching the graph of $h_*$ at $X_0$ and staying completely under the graph of $h_*$.
This gives that $h_* \in  C^{1,1} ( \R^d \setminus \overline{\Omega}) $,
and completes the proof of this lemma.
\end{proof}

With the regularity of $h_*$ at hand, we are in a position to address the uniqueness of solutions to \eqref{prob-main}.

\begin{lem}\label{Lem-uniq-0case}{\normalfont{(Uniqueness)}}
Assume $\Omega$ is a bounded convex domain with $C^{1,1}$ boundary.
Then \eqref{prob-main} has a unique weak solution given by \eqref{h-star-def}.
\end{lem}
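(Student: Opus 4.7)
The plan is to deduce uniqueness directly from the comparison principle (Lemma \ref{Lem-comparison}) together with the regularity of the candidate solution $h_*$ established in Lemma \ref{Lem-C11}. We have already shown in Proposition \ref{prop-exist} that $h_*$ is a weak solution to \eqref{prob-main}, and in Lemma \ref{Lem-C11} that $h_*$ is $C^{1,1}$ in $\R^d \setminus \overline{\Omega}$, with $C^{1,1}$ free boundary; in particular both the graph of $h_*$ and $\Gamma_{h_*}$ are $C^1$.

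Let $u$ be any weak solution to \eqref{prob-main} corresponding to the same data $\Omega, h_0, \lambda_0$, and let $U$ denote the extension of $u$ to $\R^d$ by $h_0$ on $\Omega$; write $H_*$ for the analogous extension of $h_*$. I will apply Lemma \ref{Lem-comparison} with the degenerate but admissible inclusion $\Omega_1 = \Omega_2 = \Omega$. First I would take $(u_1, u_2) = (u, h_*)$: since $u_2 = h_*$ is $C^1$ in its positivity set by Lemma \ref{Lem-C11}, part \textnormal{(b)} of the comparison principle yields $U \leq H_*$ on $\R^d$. Then I would swap the roles and take $(u_1, u_2) = (h_*, u)$: now $u_1 = h_*$ is again $C^1$, so part \textnormal{(b)} gives $H_* \leq U$ on $\R^d$. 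Combining the two inequalities forces $U \equiv H_*$, i.e. $u = h_*$, which is the desired uniqueness.

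The only point requiring a brief verification is that Lemma \ref{Lem-comparison} remains valid in the limiting case $\Omega_1 = \Omega_2$; I would note that its proof relies purely on the local geometric argument using support hyperplanes of slope $\lambda_0$ and Lemma \ref{Lem-segment}, and the hypothesis $\Omega_1 \subset \Omega_2$ is used only in the (trivially satisfied) sense that the containment is not strict. The main potential obstacle is therefore not the comparison step itself but ensuring that the $C^1$ regularity of $h_*$ is strong enough to invoke part \textnormal{(b)} with both orientations; this is guaranteed by Lemma \ref{Lem-C11}, which provides both the $C^1$ smoothness of the graph and of the free boundary simultaneously. Once the comparison applies in both directions, uniqueness is immediate, and the explicit formula \eqref{h-star-def} for the unique weak solution follows.
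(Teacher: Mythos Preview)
Your proposal is correct and follows essentially the same approach as the paper: invoke Proposition~\ref{prop-exist} for existence, Lemma~\ref{Lem-C11} for the $C^{1,1}$ regularity of $h_*$, and then apply the comparison principle of Lemma~\ref{Lem-comparison} twice (with $\Omega_1=\Omega_2=\Omega$) to sandwich any weak solution $u$ between $h_*$ and $h_*$. The paper's proof is slightly terser but identical in substance; your remark that the case $\Omega_1=\Omega_2$ is admissible in Lemma~\ref{Lem-comparison} is a reasonable clarification.
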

\begin{proof}
Proposition \ref{prop-exist} already gives that $h_*$ is a solution, and we only need to establish uniqueness.
Let $u$ be any weak solution to \eqref{prob-main} which is concave. 
By Lemma \ref{Lem-C11} we have that $h_*$ is $C^{1,1}$ in the interior of its positivity set
and has $C^{1,1}$ regular free boundary.
Hence, the comparison principle of Lemma \ref{Lem-comparison} applied (twice) implies $u=h_*$ in $\{h_*>0\}$,
and finishes the proof of the lemma.
\end{proof}

\begin{remark}
The last lemma shows that for regular $\Omega$ the unique solution of \eqref{prob-main} is a ruled surface.
\end{remark}

We conclude this section, by 
showing that the strict convexity of $\Omega $ is inherited by the free boundary.
In order to quantify the strict convexity, we recall the definition of a \emph{rolling ball}
\`{a} la Blaschke.
\begin{defn}\label{def-rolling-ball}
We say that $\Om $ \emph{rolls freely} inside a ball of radius $R$,
if at each $X\in \partial \Om$ there exists $Z\in \R^d$ such that the ball $B=B(Z,R)$
contains $\Om$ and touches $\partial \Om$ at $X$. 
\end{defn}

For instance, if $\Om$ is $C^2$ and all principal curvatures of $\partial \Om$
are bounded below by some constant $\kappa>0$, then in view of the Blaschke's rolling ball theorem 
(which gives an inclusion principle, based on comparison of second fundamental forms)
we get that the constant $1/\kappa$ serves as a radius of the rolling ball in question.

\begin{lem}\label{Lem-reg-FB-0case}
Assume $\Omega$ is a bounded strictly convex domain with $C^{1,1}$ boundary.
Then the free boundary $ \fb{h_*} $ is strictly convex.
Moreover, if $\Om$ rolls freely inside a ball of radius $R$, then the free boundary has a rolling ball of 
radius
\begin{equation}\label{r1}
\frac{R}{\lambda_0} \left( \lambda_0 + h_0 \right),
\end{equation}
where the parameters $\lambda_0$ and $h_0$ are fixed from \eqref{prob-main}.
\end{lem}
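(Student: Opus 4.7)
The plan is to construct, at every $Y_0 \in \fb{h_*}$, an explicit spherical outer barrier whose radius is exactly the one in \eqref{r1}. Once such a barrier is in hand, strict convexity of $\fb{h_*}$ at $Y_0$ is automatic (a sphere is strictly convex), and the uniform outer rolling ball of Definition~\ref{def-rolling-ball} is realized with that same radius. Thus both conclusions of the lemma follow from a single construction.

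Fix $Y_0 \in \fb{h_*}$. By Lemma~\ref{Lem-C11} (together with the representation \eqref{h-star-def}), the graph of $h_*$ contains a line segment joining $(X_0, h_0)$ to $(Y_0, 0)$ for some $X_0 \in \partial \Om$, and this segment lies in the unique support hyperplane $H_{X_0}$ to $\widehat{\Om}$ at $X_0$ of slope $\lambda_0$. The hypothesis that $\Om$ rolls freely inside a ball of radius $R$ produces $Z \in \R^d$ with $\Om \subset B(Z,R)$ and $\partial B(Z,R) \cap \partial \Om = \{X_0\}$. Since $\partial B(Z,R)$ and $\partial \Om$ share their outward unit normal at $X_0$, the lifted convex bodies $\widehat{\Om}$ and $\widehat{B(Z,R)}$ share the same slope-$\lambda_0$ support hyperplane at $X_0$, namely $H_{X_0}$. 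Let $u_Z$ denote the conical solution of Example~\ref{ex-cones} associated with $B(Z,R)$; its free boundary is a sphere centered at $Z$ of radius \eqref{r1}, and along the outward radial ray from $Z$ through $X_0$ the graph of $u_Z$ contains a segment in $H_{X_0}$ joining $(X_0,h_0)$ to the corresponding point of its own free boundary.

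Now apply the comparison principle of Lemma~\ref{Lem-comparison}(b) with the inclusion $\Om \subset B(Z,R)$, using that $u_Z$ is $C^\infty$ in its positivity annulus: this yields $U_{h_*} \leq U_{u_Z}$ on $\R^d$, where $U_{\cdot}$ denotes the extension by $h_0$ inside the respective domain. Because both segments above lie on the same radial line from $Z$ through $X_0$ and both functions coincide there with the affine expression defining $H_{X_0}$, the inequality $h_* \leq u_Z$ forces the two segments to terminate at the same point, so $Y_0$ lies on the free-boundary sphere of $u_Z$. The same comparison gives $\{h_*>0\} \subset \{u_Z>0\}$, and the latter is the closed ball centered at $Z$ of radius \eqref{r1}. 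Consequently this closed ball contains $\{h_*>0\}$ and touches $\fb{h_*}$ at $Y_0$, which is exactly the outer rolling ball condition of Definition~\ref{def-rolling-ball} at $Y_0$ with the asserted radius.

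The main subtlety is the identification of the two free-boundary endpoints along the shared hyperplane $H_{X_0}$, which uses that the coincidence of the outward normals at $X_0$ makes both segments lie along the same ray emanating from $X_0$, so that $h_* \leq u_Z$ along this ray forces them to vanish simultaneously. Everything else is a direct assembly of Example~\ref{ex-cones}, Lemma~\ref{Lem-C11} and Lemma~\ref{Lem-comparison}.
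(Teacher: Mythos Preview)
Your argument for the second assertion --- the outer rolling ball of radius \eqref{r1} for $\Gamma_{h_*}$ under the hypothesis that $\Omega$ rolls freely inside a ball of radius $R$ --- is correct and coincides with the paper's approach: both compare $h_*$ with the conical solution over the outer ball $B(Z,R)$ tangent to $\partial\Omega$ at $X_0$, observe that the two graphs share the same segment in $H_{X_0}$, and conclude via Lemma~\ref{Lem-comparison}.

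There is, however, a genuine gap in your treatment of the first assertion. Strict convexity of $\Gamma_{h_*}$ is claimed in the lemma under the \emph{weaker} hypothesis that $\Omega$ is merely strictly convex and $C^{1,1}$; the outer rolling-ball condition is an additional assumption introduced only for the ``moreover'' clause. You invoke the rolling-ball hypothesis from the outset (``The hypothesis that $\Omega$ rolls freely inside a ball of radius $R$ produces $Z$\dots''), so your construction delivers strict convexity of $\Gamma_{h_*}$ only under that stronger assumption. A bounded strictly convex $C^{1,1}$ (indeed $C^\infty$) domain need not roll freely inside any ball: a domain whose boundary near a point is the graph of $x\mapsto x^4$ is strictly convex but admits no exterior tangent ball at that point. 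Hence your spherical-barrier construction is unavailable in the generality required for the first conclusion.

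The paper handles the first assertion by a separate contradiction argument that uses only the \emph{inner} touching cone already supplied by Lemma~\ref{Lem-C11}. If $\Gamma_{h_*}$ contained a segment $[X,Y]$, then Lemma~\ref{Lem-segment} gives segments in the common support plane $\Pi$ from $X$ and $Y$ to points $\widehat X,\widehat Y\in\partial\widehat\Omega$; strict convexity of $\Omega$ forces $\widehat X=\widehat Y$; but then the interior cone at that point would have to carry two distinct segments in $\Pi$, which a round cone cannot. To repair your proof you must either supply such an argument, or else produce (what is false in general) an exterior tangent ball at each $X_0\in\partial\Omega$ from strict convexity and $C^{1,1}$ alone.
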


\begin{proof}
In view of Lemma \ref{Lem-C11} we have that the free boundary of $h_*$
is $C^{1,1}$. Now assume for contradiction, that there is a point $X\in \fb{h_*} $ 
such that the unique support hyperplane of $\fb{h_*} $ at $X$,
call it $G$, intersects $\fb{h_*} $ in a point $Y\neq X$. By convexity, the line segment $[X, Y]$
should lie on the free boundary $\fb{h_*} $. 
Let $\Pi$ be the unique support hyperplane to the graph of $h_*$
passing through $X$. Clearly, $Y\in \Pi$, and by Lemma \ref{Lem-segment}
there exist points $\widehat{X}, \widehat{Y} \in \partial \widehat\Om$ such that
the segments $[X, \widehat{X}] $ and $[Y, \widehat{Y}]$ lie on the intersection of $\Pi$ and
the graph of $h_*$. In view of the strict convexity of $\Om$, we get $\widehat{X} = \widehat{Y}$.
But recall that the interior touching cone of Lemma \ref{Lem-C11} must have both
segments through $X$ and $Y$ on its graph, which is a contradiction,
and hence the strict convexity of $\Gamma$.

\smallskip

We now proceed to the second part of the lemma. Let $X\in \partial \Omega$ by any,
and $B=B(Z, R)$ be a ball containing $\Omega$ and touching $\partial \Omega$ at $X$.
Fix also an inner touching ball at $X$, call it $b$, as was discussed above in the proof of Lemma \ref{Lem-C11}.
Then, consider the conical solutions corresponding to $b$ and $B$, denoted by $u_b$ and $u_B$ respectively.
By construction, $h_*$, $u_b$ and $u_B$ all share a line segment on their graphs, determined by the intersection
of the support plane to $\widehat{\Omega}$ at $X$ and having slope $\lambda_0$.
It follows, by comparison given by Lemma \ref{Lem-comparison} that the free boundary
of $u_B$, which is a ball of radius \eqref{r1} as can be seen
from Example \ref{ex-cones}, contains $\fb{h_*}$ in its interior and touches it at the endpoint of the shared line segment. 
But as we saw in the proof of Lemma \ref{Lem-C11}, moving this construction along the boundary of $\Omega$
we cover all free boundary points of $h_*$, and hence the claim.
\end{proof}

\bigskip

\begin{proof}[Proof of Theorem A]
Proposition \ref{prop-exist} provides the existence of a solution and
Lemma \ref{Lem-uniq-0case} establishes the uniqueness.
Next, we get that this unique solution is a ruled surface in view of Lemma \ref{Lem-segment},
and the regularity of the solution and the free boundary follows from
Lemma \ref{Lem-C11}. The strict convexity of the free boundary is due to Lemma \ref{Lem-reg-FB-0case}.
The proof of the theorem is complete.
\end{proof}

%-------------------------------------
%    SECTION
%-------------------------------------
\section{Strictly elliptic case}\label{sec-nonZero}

Here we study the problem \eqref{prob-main} with $K_0>0$, and prove Theorem B.
The positivity of $K_0$ covers in particular the cases of positive curvature measure,
and positive Gauss curvature. One crucial difference, however,
from the homogeneous case, is that here \eqref{prob-main} may have no solutions
for arbitrary values of parameters involved in \eqref{prob-main}, 
as we show in the Appendix to this paper.

Throughout the section $\Omega$ is assumed to be $C^2$.

%%%%%%%%%%%%%%%%%%%%%%%%
%\begin{remark} Wrong! Savin's argument does not work
%Savin's estimate here works too giving an inside rolling ball condition for the 
%vertical sections parallel to the target plane. 
%\end{remark}

\subsection{Construction of a super-solution and Perron's method}\label{subsec-existence}
In this subsection we construct a solution to \eqref{prob-main} via Perron's method.
The set of all super-solutions to \eqref{prob-main} in a sense of Definition \ref{defn-Aleks}
will be denoted by $\mathbb{W}_+(K_0, \lambda_0, \Om)$.
We will prove the existence of a solution to \eqref{prob-main}
by showing that the infimum over all super-solutions solves \eqref{prob-main}.
As we show in the Appendix,  there are arrangements of the parameters involved in the formulation
of the problem \eqref{prob-main}, for which no solution exists.
We thus start by formulating a technical condition under which the existence result will be established.

Let $\kappa_0>0$ be the minimum over all $x\in \partial \Omega$ of the smallest principal
curvature of $\partial \Omega$ at $x$, and consider the inequality
\begin{equation}\label{ineq-star}
 K_0^{\frac 1d} \leq \psi^{-\frac 1d} (\lambda_0) \frac{  \kappa_0 \lambda_0^2 }{ h_0 \kappa_0 +  ( \lambda_0^2 + h_0^2 \kappa_0^2 )^{1/2} }.
\end{equation}

\begin{lem}\label{Lem-subsol-exists}
Assume the function $\psi=\psi(\xi)$ in \eqref{prob-main} is non-decreasing and  smooth.
Then, provided \eqref{ineq-star} holds, the set of super-solutions $\mathbb{W}_+$ is non-empty.
\end{lem}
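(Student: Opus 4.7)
The plan is to construct a super-solution as the lower envelope of a family of radially symmetric solutions, one associated with each outer tangent ball to $\Omega$ furnished by Blaschke's rolling theorem. This is the Monge--Amp\`ere analogue of the conical infimum used in Example \ref{ex-cones} and Proposition \ref{prop-exist}, and \eqref{ineq-star} will emerge as the precise compatibility condition on the radial profile.

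First, set $R := 1/\kappa_0$ and construct a radial profile $V : [R, r_1] \to [0, h_0]$. Writing $p(r) := -V'(r) > 0$, the radial Monge--Amp\`ere equation $\det D^2(-u) = K_0 \psi(|\nabla u|)$ reduces to $p^{d-1} p' = K_0 r^{d-1} \psi(p)$, which separates as
\[
\int_{p(R)}^{p(r)} \frac{s^{d-1}}{\psi(s)}\, ds = \frac{K_0}{d}\bigl(r^d - R^d\bigr).
\]
Combined with the boundary conditions $p(r_1) = \lambda_0$ and $\int_R^{r_1} p\, dr = h_0$, this is a two-equation system in the two unknowns $p(R)$ and $r_1$. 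The monotonicity of $\psi$ together with \eqref{ineq-star}---which is precisely the threshold for $p(R)\ge 0$ to be compatible with the integral constraint fixing $r_1 - R$ in terms of $h_0$ and $\lambda_0$---produces a solution with $0 \le p(R) \le \lambda_0$ and $R < r_1 < \infty$, so that $V$ is concave and strictly decreasing.

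Second, Blaschke's rolling theorem (cf.\ Definition \ref{def-rolling-ball}), applicable since the principal curvatures of $\partial\Omega$ are bounded below by $\kappa_0$, provides for each $x_0 \in \partial\Omega$ a ball $B_{x_0} := B(z_{x_0}, R)$ with $z_{x_0} := x_0 - R\nu_{x_0}$ (and $\nu_{x_0}$ the outer normal) satisfying $\Omega \subset B_{x_0}$ and tangent to $\partial\Omega$ at $x_0$. Define
\[
\tilde v_{x_0}(x) := \begin{cases} h_0, & |x - z_{x_0}| \le R,\\ V(|x - z_{x_0}|), & R \le |x - z_{x_0}| \le r_1,\\ 0, & |x - z_{x_0}| \ge r_1, \end{cases}
\]
which is a globally concave function solving the equation on the annulus $\{R < |x - z_{x_0}| < r_1\}$ and whose free boundary has slope exactly $\lambda_0$. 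Set
\[
w(x) := \inf_{x_0 \in \partial\Omega} \tilde v_{x_0}(x),
\]
concave as an infimum of concave functions. Conditions (b) and (c) of Definition \ref{defn-Aleks} are then immediate: any $x \in \partial\Omega$ lies in $\bar B_{x_0}$ for every $x_0$, so $w \equiv h_0$ on $\partial\Omega$; and at a regular point $y \in \Gamma_w$, the infimum is attained at some $x_0^*$ by compactness, and $w(y) = 0$ forces $y$ to lie on the free boundary of $\tilde v_{x_0^*}$, so the slope of $w$ at $y$ is bounded above by $\lambda_0$.

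The main obstacle is verifying (a), the weighted Monge--Amp\`ere inequality. For any $x \in E \subset (\R^d\setminus\Omega)\cap\{w>0\}$ one has $w(x) < h_0$, which forces the minimizer $x_0^*(x)$ to satisfy $|x - z_{x_0^*(x)}| > R$; hence $\tilde v_{x_0^*(x)}$ is smooth at $x$ and touches $w$ from above there, yielding the pointwise subgradient inclusion $\omega_x(-\tilde v_{x_0^*(x)}) \subset \omega_x(-w)$. To convert this into the integral inequality $\int_{\omega_E(-w)} d\xi/\psi(\xi) \geq K_0|E|$, I would regularize each $\tilde v_{x_0}$ by strictly concave mollification, form $w^\e := \inf_{x_0} \tilde v_{x_0}^\e$, and verify that $\det D^2(-w^\e) \geq K_0 \psi(|\nabla w^\e|)$ pointwise on the open full-measure set where the minimizer $x_0^*$ is locally constant (the equation is then inherited with equality from a single $\tilde v_{x_0^*}^\e$); on the complementary crease set where several minimizers coexist, the Monge--Amp\`ere measure acquires a non-negative singular part that only strengthens the inequality. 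Passing to the limit via the weak continuity of the Monge--Amp\`ere measure recorded in \S\ref{subsec-weakSol} then delivers (a) for $w$. The delicate step, and the one requiring the most care, is the measurable control of the crease set throughout the regularization so that no mass is lost in the limit.
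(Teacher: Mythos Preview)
Your overall architecture --- take a radially symmetric building block centred at each rolling-ball centre $z_{x_0}=x_0-R\nu_{x_0}$ with $R=1/\kappa_0$, form the lower envelope, and verify (a)--(c) --- is exactly the paper's plan. The two substantive departures are the choice of building block and the verification of the Monge--Amp\`ere inequality for the envelope, and in both places your route is harder than necessary and leaves gaps.

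\smallskip

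\textbf{Building block.} You solve the full radial free boundary problem and then assert that \eqref{ineq-star} ``is precisely the threshold for $p(R)\ge 0$ to be compatible with the integral constraint''. This is not proved, and for general non-constant $\psi$ there is no reason the threshold for your two-equation ODE system should coincide with \eqref{ineq-star}. The paper sidesteps this entirely by taking the building block to be the paraboloid
\[
P(x)=h_0+\alpha r_0^2-\alpha|x-z_0|^2,\qquad r_0=1/\kappa_0,
\]
with $\alpha$ chosen so that $|\nabla P|=\lambda_0$ on $\{P=0\}$ (a quadratic in $\alpha$). One then has $\det D^2(-P)=(2\alpha)^d$, and since $|\nabla P|\le\lambda_0$ on $\{P>0\}$ and $\psi$ is non-decreasing, the super-solution inequality $(2\alpha)^d\ge K_0\psi(\lambda_0)$ is \emph{exactly} \eqref{ineq-star}. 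So \eqref{ineq-star} is not the threshold for some ODE solvability question; it is derived in one line from the paraboloid. Your exact radial solution, when it exists, would only sharpen this, but you have not shown existence under \eqref{ineq-star} for general $\psi$. (Incidentally, your $\tilde v_{x_0}$ extended by $0$ outside $\{|x-z_{x_0}|\le r_1\}$ is \emph{not} globally concave --- the kink at $r_1$ goes the wrong way --- though it is concave on $\overline{B(z_{x_0},r_1)}$, which is all you need.)

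\smallskip

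\textbf{The inequality (a).} You take an uncountable infimum and propose to handle (a) by mollification plus a crease-set analysis that you yourself flag as ``delicate'' and do not carry out. The paper avoids this completely: fix a dense sequence $\{x_n\}\subset\partial\Omega$, set $u_n=\min_{1\le k\le n}P_k$, and for each finite $n$ verify (a) by partitioning any Borel $E$ into the sets $\{P_k<P_j\ \forall j\ne k\}$ and using Aleksandrov's theorem that the subgradient images over disjoint sets overlap in null sets. This gives $\int_{\omega_E(-u_n)}d\xi/\psi(|\xi|)\ge K_0|E|$ by an elementary induction; then pass to the monotone limit $u_0=\lim u_n$ using the weak continuity of the Monge--Amp\`ere measure already recorded in \S\ref{subsec-weakSol}. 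No regularisation, no crease bookkeeping.
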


\begin{proof}
We will construct a super-solution as a lower envelope of a certain
family of paraboloids. Set $r_0= 1/\kappa_0$, then
for a given $x_0\in \partial \Omega$, there is $z_0\in \R^d$ such that the ball $B= B(z_0, r_0)$
is internally tangent to $\Omega$ at $x_0$ (i.e. $B$ and $\Omega$ are tangent at $x_0$ and share the same outward normal at $x_0$).
Moreover, for any $x\in \partial \Omega$ and any $x'\in \partial B$
satisfying $n_\Omega(x) = n_B(x')$ for the interior normal vectors, we have
$II_{x, \Omega} \geq II_{x', B}$ for the second fundamental forms in view of the choice of $r_0$.
Applying the inclusion principle for ovaloids due to Rauch \cite{Rauch},
which is a generalization of Blaschke's rolling ball theorem, we conclude that $\Omega \subset B$.

Consider a paraboloid $P(x) = h_0+ \alpha r_0^2 - \alpha |x-z_0|^2$ where $\alpha>0$ is a constant to be determined below, and $r_0,z_0 $ are as above.
It is clear that $P(x_0)= h_0 $ and $P\geq h_0$ everywhere on $ \overline{\Omega} $
due to the inclusion $\Omega \subset  B$.
We now choose $\alpha$ so that $P$ will satisfy the first and the third conditions of 
Definition \ref{defn-Aleks}.
A straightforward computation shows that the free boundary condition for $P$ is equivalent to
\begin{equation}\label{lambda-0}
2 \alpha \left(  \frac{h_0 + \alpha r_0^2}{\alpha} \right)^{\frac 12} =\lambda_0 \quad \Rightarrow \quad (4r_0^2)\alpha^2+4h_0\alpha-\lambda_0^2=0.
\end{equation}
Solving the quadratic equation for positive $\alpha$, and replacing $r_0 = 1/\kappa_0$  we get
\begin{equation}\label{alpha-for-P}
 \alpha = \frac{\lambda_0^2  \kappa_0 }{ 2 \left( h_0 \kappa_0 + ( \lambda_0^2 + h_0^2 \kappa_0^2 )^{1/2}    \right)  }.
\end{equation}
Using the fact that $\psi$ is non-decreasing, we observe that $P$ satisfies the inequality in Definition \ref{defn-Aleks} (a) if 
$$
(2\alpha)^d \geq K_0 \psi \left( 2\alpha \left( \frac{h_0 + \alpha r_0^2}{\alpha} \right)^{1/2} \right) = 
K_0 \psi(\lambda_0),
$$
where we have also used \eqref{lambda-0}.
The obtained condition on $\alpha$ is precisely the inequality \eqref{ineq-star}.
The conclusion is that a single paraboloid satisfies a definition of a super-solution 
except for the boundary data. We next fix the boundary data, by taking the minimum over all
paraboloids. To this end fix a dense sequence of points $\{x_n\}_{n=1}^\infty \subset \partial \Omega$ 
and for each $n\in \mathbb N$ let $P_n$ be the paraboloid constructed above for $x_n$.
For $n=1,2,...$ define
$$
u_n(x) = \min_{1\leq k \leq n} P_k(x), \qquad x\in \R^d .
$$
Clearly, each $u_n$ is concave as a minimum of concave functions.
It is also clear, due to the construction of paraboloids, that $u_n(x_i) = h_0$ for each $1 \leq i \leq n$,
and that $u_n(x) \geq h_0$ for all $x\in \overline{\Omega}$.
We claim that $\det D^2 (-u_n) \geq K_0 \psi(|\nabla u_n|) $ on $\mathcal{U}_n:= \{u_n>0 \} \setminus \overline{\Omega} $.
Indeed, assume $n=2$, and fix any Borel set $E\subset \mathcal{U}_2$.
We partition $E$ into the following subsets: $E_1= \{x\in E: \ P_1(x)< P_2(x)\}$, $E_2= \{x\in E: \ P_2(x)< P_1(x)\}$, and $E_{12}= \{x\in E: \ P_1(x) = P_2(x)\}$.
It is clear, in view of continuity of $P_1$, $P_2$, that all sets $E_1, E_2, E_{12}$ are Borel.
For a given concave function $f$ and Borel set $B$, let $\omega_B(f)$  be the gradient image of $f$ on $E$.
Then, we get 
\begin{equation}\label{omega-for-u2}
\omega_E(-u_2) = \omega_{E_1}(-u_2) \cup \omega_{E_2}(-u_2)\cup \omega_{E_{12} }( -u_2). 
\end{equation}
Now observe, that thanks to the theorem of A.D. Aleksandrov (see subsection \ref{subsec-weakSol}), coupled with the fact that the sets $E_1$, $E_2$, $E_{12}$ are pairwise disjoint, we obtain
that the sets in the right-hand side of \eqref{omega-for-u2} intersect in null-sets.
Also note that $\omega_{E_1} (-P_i) \subset \omega_{E_1} (-u_2)$ and $\omega_{E_{12}} (-P_i) \subset  \omega_{E_{12}} (-u_2)$ for $i=1,2$.
Hence, we get
\begin{multline}\label{omega-est}
\int_{\omega_E(-u_2)  } \frac{d \xi}{\psi(|\xi|) }  =  
\int_{\omega_{E_1}(-u_2)} + \int_{\omega_{E_2}(-u_2)} +\int_{\omega_{E_{12}}(-u_2)} \geq \\
\int_{\omega_{E_1}(-P_1)} + \int_{\omega_{E_2}(-P_2)} +\int_{\omega_{E_{12}}(-P_2)} \geq \\
 K_0 |E_1| + K_0|E_2| + K_0 |E_{12}| = K_0 |E|.
\end{multline}
This shows that $u_2$ satisfies Definition \ref{defn-Aleks} on $ \mathcal{U}_2$;
the case for general $n\geq 2$ follows from the same argument by induction.
We conclude that
\begin{equation}\label{u-n-K0}
\det D^2 (-u_n) \geq K_0 \psi(|\nabla u_n|) \text{ on } \{u_n>0\} \setminus \overline{\Omega}, \qquad \text{ for each } n\in \mathbb N.  
\end{equation}
Due to the construction, for each $n\in \mathbb{N}$ we also have
\begin{equation}\label{u-n-FB}
 | \nabla u_n(x) | = \lambda_0, \qquad x\in \partial\{u_n>0\},
\end{equation}
in a weak sense, see subsection \ref{subsec-FB}.

Consider the set $\mathcal{U}_0: = \bigcap\limits_{n=1}^\infty \{u_n \geq -1\}$. In view of the construction
of $u_n$, we have that $\mathcal{U}_0$ is a compact convex set containing $\Omega$.
The sequence $\{u_n\}$ is a decreasing sequence of concave functions bounded below on $\mathcal{U}_0$,
and hence the limit $u_0(x): = \lim\limits_{n\to \infty} u_n(x)$, $x\in \mathcal{U}_0$ exists and defines a concave
function on $\mathcal{U}_0$. It is clear from the density of the points $\{x_n\}$ in $\partial \Omega$
that $u_0\equiv h_0$ on $\partial \Omega$, and also that $u_0\geq h_0$ on $\Omega$.
A similar argument as in Proposition \ref{prop-exist} shows that the convergence of $u_n$ to $u_0$
is uniform on the set $\overline{ \{u_0 >0 \} }$. In particular, 
from the weak continuity of the generalized Monge-Amp\`ere measure (see \cite[Theorem 9.1]{Bak})  and \eqref{u-n-K0} we get
that $\det D^2 (- u_0) \geq K_0 \psi(|\nabla u_0|)$ on $\{u_0>0\} \setminus \overline{\Om}$.
Next, the free boundary condition for $u_0$ as claimed in Definition \ref{defn-Aleks} (c)
follows from the same line of argument as we had in Proposition \ref{prop-exist} coupled with \eqref{u-n-FB}.
Finally, redefining $u_0$ as identically equal to the constant $h_0$ on $\Omega$,
provides an element of $\mathbb{W}_+$, and completes the proof of this lemma.
\end{proof}

\begin{remark}
Instead of paraboloids, one can construct a super-solution
as an envelope of spherical caps, via a similar procedure. While this seems to be a natural choice
for the equation of prescribed Gauss curvature, spherical caps
need more restrictive assumptions, as compared to \eqref{ineq-star},
for adjusting the free boundary condition.
\end{remark}

The next lemma is necessary for establishing a non-degeneracy result for super-solutions.

\begin{lem}\label{Lem-non-degeneracy}{\normalfont(Comparison with vanishing curvature case)}
Let $u$  be any weak super-solution to \eqref{prob-main}, and let $u_0$ be the unique
weak solution to \eqref{prob-main} with identically 0 r.h.s., and with the same $\Omega$, $h_0$ and $\lambda_0$. 
Then $u \geq u_0$ on $\{u_0 \geq 0\} \setminus \Omega$.
\end{lem}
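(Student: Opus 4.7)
\smallskip

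The plan is to prove this by a direct application of the Aleksandrov comparison principle for the weighted Monge--Amp\`ere measure $\mu_\psi(v)(E) := \int_{\omega_E(v)} d\xi/\psi(\xi)$ appearing in Definition~\ref{defn-Aleks}(a). By Theorem~A, $u_0$ is concave, continuous, non-negative on $\R^d\setminus\Om$, equals $h_0$ on $\p\Om$, vanishes on $\fb{u_0}$, and $\mu_\psi(-u_0)\equiv 0$ as a measure on $D := \po{u_0}\setminus\overline{\Om}$. The super-solution $u$ is concave and continuous on $\R^d\setminus\overline\Om$, equals $h_0$ on $\p\Om$, and will be assumed non-negative (extending by $0$ outside $\po u$ if necessary, as is the case for the super-solutions produced by Lemma~\ref{Lem-subsol-exists}). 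The boundary of $D$ splits as $\p\Om\cup\fb{u_0}$; on the former $u=u_0=h_0$, on the latter $u_0=0\le u$, so $-u\le -u_0$ on $\p D$.

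\smallskip

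For the interior estimate, the key observation is that $-u$ and $-u_0$ are both convex on $D$, so their weighted Monge--Amp\`ere measures are non-negative Borel measures on $D$. By Definition~\ref{defn-Aleks}(a), $\mu_\psi(-u)(E)\ge K_0|E|\ge 0$ for every Borel $E\subset\po u\cap D$, while $\mu_\psi(-u_0)\equiv 0$. In particular, $\mu_\psi(-u)\ge \mu_\psi(-u_0)$ as measures on $D$. I would then invoke the Aleksandrov comparison principle, valid for the $\psi$-weighted measure by virtue of its weak continuity proved in \cite{Pog-MA} (the standard argument for the unweighted case being in \cite{Gut-b}): for convex $v,w\in C(\overline D)$ with $\mu_\psi(v)\ge\mu_\psi(w)$ on $D$ and $v\le w$ on $\p D$, one has $v\le w$ on $\overline D$. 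Applied with $v=-u$ and $w=-u_0$, this yields $u\ge u_0$ on $\overline D$. On the remainder of $\R^d\setminus\Om$ the inequality $u\ge 0 = u_0$ is automatic.

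\smallskip

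The one delicate point, which I expect to be the main obstacle, is that the density lower bound $\mu_\psi(-u)\ge K_0$ is only asserted on $\po u$, which a priori could be a proper subset of $D$. This turns out not to be a real issue: the comparison principle requires only the measure inequality $\mu_\psi(-u)\ge\mu_\psi(-u_0)$, and this is trivially guaranteed by non-negativity of $\mu_\psi(-u)$ together with the vanishing of $\mu_\psi(-u_0)$. A posteriori, the conclusion $u\ge u_0>0$ on $D$ forces $D\subset\po u$, so the positive density bound in fact held throughout, and no further care is needed at any candidate interior zero set of $u$. This approach avoids the Blaschke-extension machinery developed later in the paper.
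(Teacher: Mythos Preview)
Your approach via the Aleksandrov comparison principle is genuinely different from the paper's, which instead invokes the $C^{1,1}$ regularity of $u_0$ furnished by Theorem~A and then repeats verbatim the geometric sliding-hyperplane argument of Lemma~\ref{Lem-comparison}. But your argument contains a real gap, located precisely at the boundary inequality on $\Gamma_{u_0}$.

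You assert that $u_0=0\le u$ on $\Gamma_{u_0}$ and propose to enforce this by ``extending by $0$ outside $\po u$ if necessary''. Neither step is justified. Nothing in conditions (a) and (b) of Definition~\ref{defn-Aleks} prevents $\po u$ from being a proper subset of $\po{u_0}$, in which case $u<0$ on part of $\Gamma_{u_0}$ and the required boundary inequality $-u\le -u_0$ fails there. Replacing $u$ by $\max(u,0)$ then destroys concavity (equivalently, $-\max(u,0)$ is not convex), so the Monge--Amp\`ere comparison is no longer available on $D$. For a concrete obstruction: with $\Omega=B(0,R_0)$ and $\psi\equiv 1$, the paraboloid $u_A(x)=h_0-A(|x|^2-R_0^2)$ with $A$ large satisfies (a) and (b), yet has its free boundary as close to $\partial\Omega$ as one likes, hence $u_A<0$ on $\Gamma_{u_0}$. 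Only the slope bound (c) rules out such steep competitors. Since your argument never invokes (c), it cannot separate a genuine super-solution from $u_A$, and therefore cannot yield the conclusion.

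The paper's route closes exactly this gap: the argument of Lemma~\ref{Lem-comparison} combines the segment structure of $u_0$ (Lemma~\ref{Lem-segment}) and its $C^{1,1}$ regularity with the slope condition (c) for $u$ at points of $\Gamma_u$, to exclude the scenario that $\Gamma_u$ intrudes into $\po{u_0}$. Once that inclusion $\po{u_0}\subset\po u$ is secured, your measure-comparison argument would also go through; but the geometric step is not optional.
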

\begin{proof}
Since $\Omega $ is assumed to be $C^2$ we have from Section \ref{sec-homogen} that $u_0$ is $C^{1,1}$
in its positivity set, and the free boundary $\partial\{u_0>0\} \setminus \overline{\Omega}$ is $C^{1,1}$ also.
Now, the proof of the lemma follows from precisely the same argument as we had for Lemma \ref{Lem-comparison}.
\end{proof}

In view of Lemma \ref{Lem-subsol-exists}, the set of super-solutions $\mathbb{W}_+$ is non-empty;
and thanks to Lemma \ref{Lem-non-degeneracy} all super-solutions are bounded below by the solution
of the homogeneous equation with the same input data; in particular, super-solutions do not degenerate (collapse).
Due to these observations, the lower envelope of $\mathbb{W}_+$
$$
u_*(x): = \inf\limits_{w\in \mathbb{W}_+} w(x), \qquad x\in \R^d,
$$
defines a concave function bounded below by $u_0$-the solution to the homogeneous equation,
and bounded above by the constant $h_0$. The aim is to show that $u_*$
is a solution to \eqref{prob-main}. This we will do in two steps, first showing that
$u_*$ solves the equation in \eqref{prob-main}, and second, which is the hardest part,
that $u_*$ satisfies the free boundary condition.

\begin{lem}\label{Lem-smallest-solves-PDE}
Let $u_*$ be as above. Then, 

\begin{itemize}
\item[$\bf 1^\circ$]
$u_* \in \mathbb{W}_+$ and
$$
\mathrm{det} D^2 (-u_*)= K_0 \psi( | \na u_* |) \ \ \ \text{ in } \ \ \ \{u_*>0\} \setminus \overline{\Omega}, 
$$
\item[$\bf 2^\circ$] the graph of $u_*$ is strictly concave and consequently $u_*\in C^\infty$ in $\po u_*\setminus \overline \Om$.
\end{itemize}
\end{lem}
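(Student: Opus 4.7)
My plan is to prove both statements by combining a standard Perron argument with Caffarelli-type strict convexity for Monge-Amp\`ere equations with positive right-hand side.

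For $\mathbf{1^\circ}$, I would first verify $u_* \in \W_+$ and then upgrade the super-solution inequality to an equality. Membership follows by extracting a decreasing sequence $w_n \in \W_+$ with $w_n \searrow u_*$ pointwise; by the local equi-Lipschitz bound for concave functions with a fixed upper bound (used in the proof of Proposition \ref{prop-exist}) the convergence is locally uniform on $D := \{u_*>0\} \setminus \overline\Omega$. Weak continuity of the generalized Monge-Amp\`ere measure $\mu_\psi(E) = \int_{\omega_E(-\cdot)} d\xi/\psi(\xi)$ from subsection \ref{subsec-weakSol} preserves $\mu_\psi(E) \geq K_0|E|$ under such convergence, while the Dirichlet condition $u_* = h_0$ on $\partial\Omega$ and the free boundary condition at regular points of $\Gamma_{u_*}$ pass to the limit via the supporting-hyperplane-compactness argument from the last paragraph of Proposition \ref{prop-exist}.

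To promote $\ge$ to $=$, suppose for contradiction that $\mu_\psi(-u_*)(E) > K_0|E|$ for some Borel $E \subset B \Subset D$, where $B$ is a small ball. I would solve the Aleksandrov Dirichlet problem $\det D^2(-v) = K_0 \psi(|\nabla v|)$ on $B$ with $v|_{\partial B} = u_*|_{\partial B}$ (existence and uniqueness from classical MA theory, cf.\ \cite{Gut-b}), and deduce $v \leq u_*$ on $B$, strictly at some interior point, by Monge-Amp\`ere comparison. The glued function $\tilde u := v$ on $B$, $u_*$ elsewhere, is concave and lies in $\W_+$ (the MA inequality is an equality on $B$, inherited on $D\setminus B$; the Dirichlet and free boundary conditions are unchanged as $B \Subset D$), yet $\tilde u < u_*$ somewhere, contradicting $u_* = \inf \W_+$.

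For $\mathbf{2^\circ}$, I would apply Caffarelli's interior strict convexity theorem to the convex Aleksandrov solution $-u_*$, whose right-hand side is bounded below by a positive constant on compact subsets of $D$. If $-u_*$ fails to be strictly convex at some $x_0 \in D$, the contact set of $-u_*$ with a supporting hyperplane at $x_0$ contains a segment $[y_1, y_2]$ extending to $\partial D$ on both sides. There are three endpoint scenarios: (A) both in $\partial\Omega$, excluded by strict convexity of $\Omega$; (B) both in $\Gamma_{u_*}$, excluded because $u_*$ affine with zero values at both endpoints forces $u_* \equiv 0$ on the segment, contradicting $u_* > 0$ in its interior; (C) the mixed case, $y_1 \in \partial\Omega$ and $y_2 \in \Gamma_{u_*}$. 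To rule out (C), I invoke $u_* \geq u_0$ from Lemma \ref{Lem-non-degeneracy} ($u_0$ the homogeneous solution from Theorem A): concavity of $u_0$ on $[y_1,y_2]$ together with $u_0(y_1)=h_0$ and $u_0(y_2)=0$ (the latter forced by $u_*(y_2)=0$ and $u_* \geq u_0$) gives $u_0 \geq u_*$ on $[y_1,y_2]$, hence $u_* = u_0$ along the segment; a strong-maximum-principle-type argument for the Monge-Amp\`ere operator, exploiting the local gap $\det D^2(-u_*) \geq c > 0 = \det D^2(-u_0)$, upgrades this to $u_* \equiv u_0$ in a full neighborhood of an interior point of the segment, contradicting $K_0 > 0$. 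Once strict concavity is in hand, the equation is locally uniformly elliptic on compacts of $D$; the standard regularity chain---Caffarelli's $C^{1,\alpha}$ from strict convexity, Caffarelli-Jian-Wang $C^{2,\alpha}$ (using $\psi \in C^\infty$, $\psi > 0$), and Schauder bootstrap---yields $u_* \in C^\infty(D)$.

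The main obstacle is Case (C). A ruling from $\partial \widehat\Omega$ to $\Gamma_{u_*}$ is compatible both with Caffarelli's theorem (it reaches $\partial D$) and with the free boundary condition (slope $\geq \lambda_0$); indeed, it is precisely the ruled-surface structure of the $K_0 = 0$ solution of Theorem A. The essential new ingredient is the comparison with $u_0$ together with the local Monge-Amp\`ere strong maximum principle, which converts the one-dimensional touching along the ruling into a full-dimensional equality incompatible with strict positivity of the right-hand side.
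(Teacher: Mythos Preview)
Your $\mathbf{1^\circ}$ is correct and matches the paper's argument in substance; the only cosmetic difference is that the paper builds the minimizing sequence explicitly via a $\delta$-net and finite minima (which also verifies that $\W_+$ is closed under $\min$), while you assert the existence of a decreasing sequence directly. Either works.

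The problem is in $\mathbf{2^\circ}$, Case (C). Your claim that $u_0(y_2)=0$ does not follow from $u_*(y_2)=0$ and $u_*\ge u_0$: this only gives $u_0(y_2)\le 0$, and in fact one expects strict inequality, since Lemma~\ref{Lem-non-degeneracy} forces $\{u_0>0\}\subset\{u_*>0\}$, so $\Gamma_{u_*}$ lies outside $\Gamma_{u_0}$. Once $u_0(y_2)<0$, the concavity argument collapses: $u_*$ is no longer the chord of $u_0$ on $[y_1,y_2]$, and you cannot conclude $u_0\ge u_*$ there, let alone $u_*=u_0$. Even if you could force equality along the segment, the ``strong-maximum-principle-type'' upgrade to a full neighbourhood is not available at this stage: $u_0$ is only $C^{1,1}$, $u_*$ has no a~priori second-order regularity, and the Monge--Amp\`ere operator is genuinely degenerate on $u_0$, so none of the standard Hopf/strong-maximum machinery applies without additional work that you have not supplied.

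The paper handles Case (C) by a completely different mechanism, borrowed from Trudinger--Wang \cite{TW}. After a rigid motion placing the endpoint $z_1\in\partial\widehat\Omega$ at the origin and the segment along the $x_d$-axis, the $C^2$ regularity of $\partial\Omega$ forces the convex representative $\widetilde u$ of the surface to satisfy $0\le\widetilde u\le C\sum_{i=1}^{d-1}x_i^2$ near the origin. One then solves the Dirichlet problem for the equation in a small ball $B(\varepsilon e_d,\varepsilon)$ to produce a $C^2$ subsolution $w$ with $\widetilde u\ge w\ge 0$; strict positivity of the right-hand side gives $\partial^2_{dd}w(0)>0$, which is incompatible with the quadratic upper bound. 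This barrier argument uses only the boundary regularity of $\Omega$ and the two-sided bound $0<\mu_1\le\det D^2(-u_*)\le\mu_2$, and avoids any comparison with $u_0$.
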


\begin{proof}
${\bf 1^\circ}$ \ We start by showing the existence of a minimizing sequence from $\mathbb{W}_+$
converging to $u_*$. Consider the set $U:= \{u_*\geq -1\}$ and $U_0: = \{u_*\geq 0\}\Subset U$, and fix any $\e>0$.
All functions $w\in \mathbb{W}_+$ have uniform modulus of continuity in the set $U_0$
in view of \cite[Lemma 3.1]{Bak}. Hence, we can fix $\delta=\delta(\e)>0$ such that
for all $w\in \W_+ \cup \{u_*\}$ we get 
\begin{equation}\label{z1}
|w(x)-w(y)|\leq \e,  \text{ if } x,y \in U_0 \text{ and } |x-y|\leq \delta.
\end{equation}
Next, we fix a set of points $\{x_i: \ 1\leq i \leq n\} \subset U_0$ forming a $\delta$-net,
i.e. for any $x\in U_0$ there is $x_i$ with $|x-x_i|\leq \delta$.
Now for each $1\leq i \leq n $ we fix $w_i\in \mathbb{W}_+$ such that
$0\leq w_i(x_i) - u_*(x_i) \leq \e$. Since $w_\e : = \min_{1\leq i \leq n}w_i(x) \in \mathbb{W}_+$
we then get $0\leq w_\e(x_i) - u_*(x_i) \leq \e$ for all $1\leq i \leq n$.
This, together with \eqref{z1} easily implies that $0\leq w_\e(x) - u_*(x) \leq C \e$
for all $x\in U_0$ where $C>0$ is an absolute constant;
and hence the existence of a minimizing sequence.

Using the existence of a minimizing sequence and weak continuity of Monge-Amp\`ere measure,
as we did in the proof of Lemma \ref{Lem-subsol-exists}, we get $u_*\in \mathbb{W}_+$,
and to complete the proof of the current lemma, it is left to show that $u_*$
solves the PDE.

We have $\det D^2 (-u_*) \geq K_0 \psi(|\nabla u_*|) $ on $\{u_*>0\} \setminus \overline{\Omega}$, 
and assume, for contradiction, that there is a ball $B\subset \{u_*>0\}$
such that $\int_{\omega_B(-u_*)} \frac{d \xi}{\psi(|\xi|)} > K_0 |B|$. 
Let $u$ be the concave solution to $\det D^2 (-u)  = K_0 \psi(|\nabla u|)$ in $B$ and $u=u_*$ on $\partial B$.
The existence of the solution $u$ to Dirichlet's  problem in strictly convex domain for continuous boundary data 
is well known (see \cite[Theorem 10.1]{Bak}).
Now consider the function $\widetilde{u}$ equal to $u$ on $B$ and $u_*$ otherwise.
By construction $\widetilde{u}$ is concave, and following the same lines as in \eqref{omega-est}
we have that $\widetilde{u} \in \mathbb{W}_+$. Due to the comparison principle, we get $u< u_*$ in $B$, hence $\widetilde{u}$
is smaller than the infimum $u_*$ in $B$ which gives a contradiction
and completes the proof of part $\bf 1^\circ$.

\smallskip

$\bf 2^\circ$
Since $u_*$ solves the PDE by $\bf 1^\circ$,
to show its smoothness it is enough to prove that $u_*$ is strictly
concave (see \cite{Pog-MA}, and \cite{Figalli}).
To this end, we use an argument from \cite[section 5.6]{TW}. If $\graph{u_*}$ is not strictly concave then there is a line segment $\ell\subset \graph{u_*}$. 
Since $u_*$ solves the PDE in view of part $\bf 1^\circ$, there are two positive constants $\mu_1, \mu_2$ such that 
$\mu_1\le \det D^2(-u_*)\le \mu_2$. 
Then, it follows from \cite{Caff-loc} that $\ell$ cannot have extremal points in the relative 
interior of $\graph{u_*}$. Hence there is $z_1\in \p \widehat{\Om}$ 
such that $z_1\in \p \widehat{\Om} \cap \ell$.
Choosing a new coordinate system we can assume that $z_1=0$ and $\ell$ is the $x_d$ axis. 
Observe that $\ell$ is transversal to $\p \widehat{\Om}$. From the regularity of $ \Om$ it follows that for the 
convex function $\widetilde u$ representing the surface in the new coordinate system 
\begin{equation}\label{blya-ff}
0\le \widetilde u(x_1, \dots, x_{d-1}, x_d)\le C\sum_{i=1}^{d-1} x_i^2,
\end{equation}
near the origin for $x_d>0$ sufficiently small.
However, using the positivity of the right hand side of the PDE, one can construct 
a sub-solution $w$ near the origin, in a small ball $B:=B(\e e_d, \e)$, $\e>0$, say,
with $w\in C^2(\overline{B})$ and $\widetilde{u} \geq w \geq 0 $ in view of the comparison principle.
But the latter violates \eqref{blya-ff}, as 
$\p^2_{dd} w(0)>0$ thanks to the smoothness of $w$ and condition $\det D^2 w >0$ in $B$.
This completes the proof of part $\bf 2^\circ$, and hence of the lemma.
\end{proof}

\subsection{Blaschke extension of the minimal super-solution}
The aim of this subsection is to show that the smallest super-solution $u_*$
verifies the free boundary gradient condition. 
To this end, we will extend $u_*$ in a neighbourhood of each free boundary point
below the halfspace $\R^d\times \{0\}$ as a super-solution to our problem.
This extension will reduce the matters to a similar scenario as we had in the interior case.
The construction of the extension of $u_*$ is inspired by Blaschke's rolling ball theorem, and hence
the name of the extension. 

\smallskip

In the construction below all 
support hyperplanes of $u_*$ at the free boundary points are {\bf extreme},
that is the hyperplanes cannot be further tilted towards $\graph{u_*}$ (see subsection \ref{subsec-FB}).
Take any point $x \in \fb{u_*}$, if it is a regular point of the free boundary,
then there is a unique support hyperplane in $\R^{d+1}$ to the graph of $u_*$
passing through $X = (x, 0) \in \R^{d+1}$ and having slope $\lambda_0$.
We call this plane $H_x$. Otherwise, if $x$ is not a regular point, 
we do the construction that follows for any support hyperplane to $\graph{u_*}$
through $X$ having slope\footnote{Taking a sequence $x_d \in \fb{u_*}$ of regular points converging to $x$
and defining $H_x$ as the limit of the corresponding hyperplanes $H_{x_n}$, produces
a support hyperplane with slope $\lambda_0$ at a non-regular point $x$.} $\lambda_0$.
Next, let $H_{x}^\bot$ be the hyperplane passing through $H_{x} \cap (\R^d\times \{0\})$
and the normal to $H_{x}$. Clearly, $H_{x}^\bot$ is orthogonal to $H_{x}$.
We now define the convex body 
$\mathcal  S^+_*$ bounded by $\graph{u_*}$ if $0<x_{d+1}<h_0$, 
$\Om\times \{h_0\}$ if $x_{d+1}=h_0$, and when $x_{d+1}<0$
we define $\mathcal  S^+_*$ as the intersection of all half spaces corresponding to the 
extreme support functions at $\fb {u_*}$, which contain $\Omega$.

For the given $x\in \Gamma_{u_*}$ consider $\mathcal S_{x}^-$ as the mirror reflection of $\mathcal S^+_*$ with respect to $H_x^\bot$ (see Figure \ref{Fig-Blaschke}).
Let $x_1, \dots x_k$ be  distinct points on $\Gamma_{u_*}$ near fixed $x_0\in \Gamma_{u_*}$ and let 
$$
\mathcal S^m= \mathcal{S^+_*} \cap \bigcap\limits_{j=1}^m \mathcal S_{x_j}^-.
$$
Note that $x_j\in \mathcal S^m$ for every $j=1, \dots, m$ and $\mathcal S^m$ is a bounded convex body.

\begin{figure}[htb]
\centering \def\svgwidth{300pt}
\input{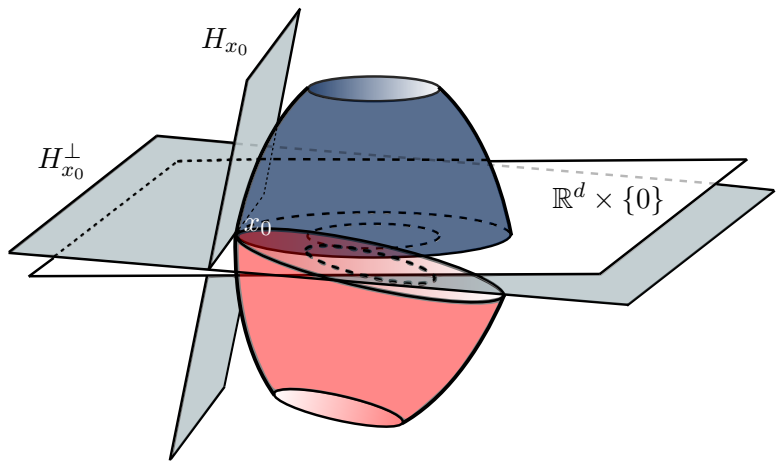}
\caption{\footnotesize{The light red coloured surface
is the reflection of the original surface (coloured in blue) with respect to the hyperplane $H_{x_0}^\bot$,
where $x_0$ is a fixed point on the free boundary. To construct the Blaschke extension,
we first extend $u_*$ for $x_{d+1}<0$ by the lower envelope of its extreme supporting planes at the free boundary points,
and then do the reflection described in the image, for the extended $u_*$.
}}
\label{Fig-Blaschke}
\end{figure}

The projections of the free boundaries of the reflections onto $T_0=\R^d\times\{0\}$ always contain $\fb u$
near the contact point. Thus we can choose 
a dense sequence of points near a fixed point $x_0$ on the free boundary and let $m\to \infty$ to obtain a 
convex body $\mathcal{S}_B$  as the limit of the nested sequence of $\mathcal{S}^m$.
We get that $\Gamma_{u_*} \subset \partial \mathcal{S}_B $ in a neighbourhood of $x_0$. 
We call $\mathcal{S}_B$  the \textbf{Blaschke reflection body}.
Clearly, $\mathcal{S}_B$ defined in this way, depends on $x_0$ and the fixed neighbourhood of it.
However, we will always work locally in a neighbourhood of a given point,
and for this reason there is no need to keep track of the dependence of $\mathcal{S}_B$
on a point $x_0$.

\medskip 

Using $\mathcal S_B$ as a barrier we wish to show that $u_*$ satisfies the free boundary condition
in generalized sense (subsection \ref{subsec-FB}). To begin with, we first look at the regular free boundary points  
where the free boundary condition may fail.
\begin{lem}\label{blya}
Let $x_0\in \fb{u_*}$ be a regular point of the free boundary, and  such  that  the  slope  at $x_0$
satisfies
$$|\nabla u_*(x_0)|<\lambda_0 .$$
Then the boundary of the convex body  $\mathcal{S}_B$ near $x_0$ is a graph in the direction of $x_{d+1}$ axis and has  slope    
strictly less than $\lambda_0$.  Here $\mathcal{S}_B$ is the Blaschke reflection body which contains 
$\mathrm{Hull}(\graph{u_*})=\mathcal S^+_*\cap \{x_{d+1}\ge 0\}$.
\end{lem}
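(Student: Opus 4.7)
The plan is to prove the lemma in two stages: first, obtain a uniform bound $\lambda_0-\delta$ on the slopes of the extreme support hyperplanes $H_x$ at all free boundary points $x$ near $x_0$, using the strict inequality $|\nabla u_*(x_0)|<\lambda_0$ together with continuity; second, use this bound and the structure of the reflection construction to conclude that every support hyperplane of $\partial \mathcal S_B$ near $X_0=(x_0,0)$ has slope strictly less than $\lambda_0$, which yields both the graph property and the slope estimate at once.

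For the first stage, $u_*$ is $C^\infty$ in $\{u_*>0\}\setminus\overline\Omega$ by Lemma \ref{Lem-smallest-solves-PDE}, and at a regular free boundary point the extreme slope is the one-sided directional derivative from the interior along the inward normal, i.e.\ the limit of $|\nabla u_*|$ as one approaches the point. Setting $\delta:=(\lambda_0-|\nabla u_*(x_0)|)/2>0$ and using continuity of this limit, I would find a neighborhood $U$ of $x_0$ in $\R^d$ such that every regular $x\in U\cap\fb{u_*}$ satisfies $|\nabla u_*(x)|\le \lambda_0-\delta$. At non-regular points in $U$ the same bound on the extreme slope follows from a standard upper semicontinuity argument for the superdifferential (any element obtained as a limit of supergradients at regular approaching points is a valid supergradient at the limit point). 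Since $\mathcal S_B$ near $X_0$ is built from reflections across $H_{x_j}^\perp$ for $x_j$ in an arbitrarily small neighborhood of $x_0$, one may assume all $x_j$ used lie in $U$.

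For the second stage, let $P\in\partial\mathcal S_B$ be a smooth boundary point close to $X_0$; the outer unit normal at $P$ corresponds to a support hyperplane that is either a support hyperplane of $\mathcal S^+_*$ or the reflection of such a hyperplane across some $H_{x_j}^\perp$. Support hyperplanes of $\mathcal S^+_*$ near $X_0$ are either tangent planes to $\graph{u_*}$ (whose slope is close to $|\nabla u_*(x_0)|<\lambda_0-\delta$ by smoothness) or extreme support planes $H_{x_k}$ extended below $\R^d\times\{0\}$ (slope $\le\lambda_0-\delta$ by Stage 1). For a reflected support hyperplane across $H_{x_j}^\perp$, the key point is that $H_{x_j}^\perp$ is orthogonal to $H_{x_j}$ and fixes $X_j$, so $H_{x_j}$ reflects to itself; nearby support hyperplanes of $\mathcal S^+_*$ are small perturbations of $H_{x_j}$, hence close to being perpendicular to the reflection axis, and an explicit computation in the two-dimensional plane spanned by the inner normal to $\fb{u_*}$ at $x_j$ and $e_{d+1}$ shows that such hyperplanes reflect to hyperplanes of essentially the same slope, with an error controllable by the size of the local neighborhood of $X_0$. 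Choosing this neighborhood small enough keeps all reflected slopes bounded by $\lambda_0-\delta/2$.

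Combining the two stages, every support hyperplane of $\mathcal S_B$ in some neighborhood of $X_0$ has slope strictly below $\lambda_0$. In particular, none of them is vertical, the $(d+1)$-component of the outer unit normal to $\partial\mathcal S_B$ stays uniformly bounded away from zero near $X_0$, and by a standard convex-body argument $\partial\mathcal S_B$ is locally the graph of a function in the $x_{d+1}$ direction with slope strictly less than $\lambda_0$, as required. I expect the main obstacle to lie in the slope bookkeeping for reflections in Stage 2: tangent planes at points distinct from the fixed point $X_j$ are not exactly perpendicular to $H_{x_j}^\perp$, so one must coordinate the choice of $U$ from Stage 1 with the size of the working neighborhood of $X_0$ in order to ensure the slope perturbation introduced by each reflection stays smaller than $\delta/2$.
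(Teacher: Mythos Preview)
Your approach departs from the paper's: the paper argues by contradiction, pulling a hypothetical bad boundary point $p_k$ of $\mathcal S_B$ back through the relevant reflection to a point $M_k\in\graph{u_*}$ and deriving a contradiction from the behaviour of support planes as $M_k\to x_0$. You instead try to bound all slopes of $\partial\mathcal S_B$ directly.

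There is a genuine gap in Stage~2, and it undermines the whole argument. You write that ``nearby support hyperplanes of $\mathcal S^+_*$ are small perturbations of $H_{x_j}$, hence close to being perpendicular to the reflection axis.'' But $H_{x_j}$ is, by construction, the support hyperplane of slope \emph{exactly} $\lambda_0$, whereas your own Stage~1 establishes that the extreme support hyperplanes (and likewise the tangent planes to $\graph{u_*}$) near $x_0$ have slope at most $\lambda_0-\delta$. These are \emph{not} small perturbations of $H_{x_j}$; they are bounded away from it. Consequently the reflection $R_{x_j}$ across $H_{x_j}^\perp$ does not nearly fix them. In the two--dimensional section spanned by $\nu(x_j)$ and $e_{d+1}$, a line making angle $\phi$ with the horizontal is sent by $R_{x_j}$ to angle $2\theta-\phi$, where $\theta=\arctan\lambda_0$; thus a plane of slope $\tan\phi<\lambda_0$ is sent to one of slope $\tan(2\theta-\phi)>\lambda_0$. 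So the reflection takes the very planes you have bounded in Stage~1 to planes with slope \emph{exceeding} $\lambda_0$, the opposite of what you need. The error term you allude to is not small: it is of order $\delta$, not $o(1)$, and no shrinking of the neighbourhood of $X_0$ cures it.

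A secondary issue: in Stage~1 you appeal to continuity of the extreme slope along $\Gamma_{u_*}$. As a supremum of continuous quotients $t\mapsto u_*(x+t\nu)/t$ this quantity is a priori only lower semicontinuous in $x$, which gives the inequality in the wrong direction. One can get around this (for instance via the interior $C^\infty$ regularity of $u_*$ from Lemma~\ref{Lem-smallest-solves-PDE} and a barrier), but it is not automatic and you should say how. This point is minor compared with the failure of Stage~2.
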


\begin{proof}  Suppose  the claim fails, then  there  are   
points $p_k\in \mathcal S_B\cap \{x_{d+1}<0\}$   with   
support functions at $p_k$ such that slopes $> \lambda_0$.

Recall that $\mathcal S_B=\mathcal S^+_*\cap \bigcap_{x\in \fb {u_*}} S^-_x$ therefore there must be $x_k\in \fb{u_*}$ such that $p_k\in \mathcal S^-_{x_k}$. 
Denote $M_k\in \graph {u_*} $ the pull back of $p_k$. 
By construction $M_k\not \in \fb {u_*}$ because there the slopes are $\le \lambda_0$. Thus at each $M_k\in \graph{u_*}$ 
there is a unique support hyperplane $H_k$ with slope $> \lambda_0$.
Since $M_k\to x_0$ and $x_0$ is a differentiability point of $\Gamma_{ u_*}$
then we get the limit of $H_k$ is an extreme support hyperplane of $\graph{u_*}$ at $x_0$
with slope $\lambda_0$ but this  is impossible due to our assumption.
Now the fact that $\mathcal{S}_B$ is a graph near $x_0$ easily follows from the 
first claim that the slope at $x_0$ is $<\lambda_0 $.
\end{proof}

\begin{cor}\label{pin}
For each point $p=(p_1, \dots, p_{d+1})$ on $\mathcal S_B$ with $p_{d+1}<0$ we have that
near $x_0$ the boundary of $\mathcal{S}_B$ is a graph of some concave function $U_*$ over $\R^d\times \{0\}$ and  
$\det D^2(-U_*)\ge K_0\psi(|\na U_*|)$. 
\end{cor}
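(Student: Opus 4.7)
The plan is to realize $\partial \mathcal{S}_B$, in a neighbourhood of $x_0$ below the hyperplane $\{x_{d+1}=0\}$, as the pointwise minimum of reflected graphs of $u_*$, and then to run the same lower-envelope argument that produced \eqref{omega-est} in Lemma \ref{Lem-subsol-exists}. The graph structure itself is essentially delivered by Lemma \ref{blya}: $\mathcal{S}_B$ is convex and its boundary at $x_0$ has slope strictly less than $\lambda_0$, so in a small neighbourhood of $x_0$ its boundary is the graph of a concave function $U_*$ over $T_0=\R^d\times\{0\}$. On $\{x_{d+1}\geq 0\}$ this function agrees with $u_*$, hence only the Monge-Amp\`ere inequality on the extended portion $\{U_*<0\}$ remains to be verified.

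Next I would exploit the explicit construction of $\mathcal{S}_B$ as an intersection. Any point $p\in\partial \mathcal{S}_B$ with $p_{d+1}<0$ sufficiently close to $(x_0,0)$ lies on the boundary of some reflected body $\mathcal{S}^-_{x_p}$ while staying in the interior of the remaining factors $\mathcal{S}^-_{x}$ and of $\mathcal{S}^+_*$; the interiority with respect to $\mathcal{S}^+_*$ is ensured because its extreme half-spaces have slope $\lambda_0$, whereas $\partial \mathcal{S}_B$ has slope $<\lambda_0$ near $x_0$ by Lemma \ref{blya}. Locally near $p$ the boundary of $\mathcal{S}^-_{x_p}$ is obtained as the Euclidean reflection of a patch of $\graph{u_*}$ across the hyperplane $H_{x_p}^\bot$; denote by $v_{x_p}$ the concave function whose graph this patch is. Since any rigid motion preserves both the Monge-Amp\`ere measure and the gradient norm, $v_{x_p}$ satisfies
\[
\det D^2(-v_{x_p}) = K_0\,\psi(|\nabla v_{x_p}|)
\]
wherever $u_*$ did.

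Finally I would approximate $U_*$ near $x_0$ by finite lower envelopes $U_*^m=\min_{1\leq j\leq m} v_{x_j}$ taken over a dense countable collection $\{x_j\}\subset\fb{u_*}$ of reflection centres. For each fixed $m$, partitioning an arbitrary Borel set $E\subset\{U_*^m<0\}$ into the cells where each $v_{x_j}$ achieves the minimum and discarding the overlaps via Aleksandrov's theorem yields the super-solution estimate $\int_{\omega_E(-U_*^m)} d\xi/\psi(|\xi|)\geq K_0|E|$ exactly as in \eqref{omega-est}. Letting $m\to\infty$ and invoking the weak continuity of the generalized Monge-Amp\`ere measure from \cite{Pog-MA} transfers this estimate to $U_*$, which is the desired inequality in the Aleksandrov sense.

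The step I expect to be the main obstacle is the identification in the second paragraph---certifying that the reflection of a patch of $\graph{u_*}$ across $H_{x_p}^\bot$ really is the graph of a concave function $v_{x_p}$ over $\R^d$ in a full neighbourhood of $p$. The strict inequality $|\nabla U_*(x_0)|<\lambda_0$ from Lemma \ref{blya} is precisely what prevents the reflection (across a hyperplane orthogonal to an extreme support plane of slope $\lambda_0$) from overturning the surface into a non-functional, vertical position. Once this is secured, the approximation via weak continuity of Monge-Amp\`ere measures is a routine adaptation of the argument used in Lemma \ref{Lem-subsol-exists}.
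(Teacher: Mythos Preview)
Your outline is the paper's argument, only spelled out in detail: the paper's entire proof is ``by Lemma~\ref{blya} the boundary of $\mathcal S_B$ is a graph near $x_0$; then apply the argument in the proof of \eqref{u-n-K0}'', and you have correctly unpacked what that means---realize $U_*$ locally as an infimum of reflected copies of $u_*$, run the partition/envelope estimate \eqref{omega-est} for finite minima, and pass to the limit by weak continuity of $\mu_\psi$.

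One step in your write-up is not right as stated. The sentence ``any rigid motion preserves both the Monge--Amp\`ere measure and the gradient norm'' is false for a general radial $\psi$. The reflection across $H_{x_p}^\bot$ mixes $x_{d+1}$ with a horizontal direction, so after reparametrising the reflected patch as a graph over $T_0$ the tangent planes have new slopes: both $|\nabla v_{x_p}|$ and $\det D^2(-v_{x_p})$ change at corresponding points (take $u\equiv 0$ and rotate by any nonzero angle to see the gradient norm change). The equality $\det D^2(-v_{x_p})=K_0\,\psi(|\nabla v_{x_p}|)$ is automatic only when the equation encodes a quantity intrinsic to the hypersurface, i.e.\ for the Gauss curvature weight $\psi(t)=(1+t^2)^{(d+2)/2}$; for other choices such as $\psi\equiv 1$ one must argue differently. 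The paper is just as terse here---in the next lemma it simply asserts that the reflected pieces ``solve the PDE with equality'' without further comment---so you are not missing an argument the paper supplies, but you should not rest this step on a blanket invariance principle that does not hold.
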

\begin{proof}
By Lemma \ref{blya}, the boundary of $\mathcal S_B$ is a graph near $x_0$ over $\R^d\times \{0\}$. Then apply 
the argument in the proof of \eqref{u-n-K0} to complete the proof of this corollary.
\end{proof}

From Corollary \ref{pin} and Lemma \ref{blya} it follows that 
there must be a neighbourhood  $\mathcal N$ of $x_0$  in $\R^{d+1}$  such that  on $(\mathcal N\cap \mathcal S_B)\cap \{x_{d+1}<0\}$ the slopes are $<\lambda_0$. If not then we will get a ruled piece of $\mathcal S_B$ in $\{x_{d+1}<0\}$ where the 
curvature must vanish. 

%The hypersurface  $\mathcal S_*$ is the Blaschke extension of $\graph{u_*}$, which is defined in a neighbourhood 
%of the fixed point $x_0 \in \fb{u_*}$.

\smallskip

With these preparations, we are now ready to show that $u_*$ satisfies the free boundary condition.

\begin{lem}\label{Lem-FB-cond-elliptic}
For all regular points $x_0 \in \Gamma_{u_*}$ we have $|\nabla u_*(x_0)|=\lambda_0$. 
\end{lem}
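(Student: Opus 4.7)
Since $u_* \in \mathbb{W}_+$ by Lemma \ref{Lem-smallest-solves-PDE}, Definition \ref{defn-Aleks}(c) already gives $|\nabla u_*(x_0)| \leq \lambda_0$, so the plan is to rule out strict inequality by contradiction. Assume some regular $x_0 \in \Gamma_{u_*}$ satisfies $|\nabla u_*(x_0)| < \lambda_0$. Lemma \ref{blya} and Corollary \ref{pin} then produce an $\R^{d+1}$-neighborhood $\mathcal{N}$ of $X_0 = (x_0, 0)$ and an $\varepsilon > 0$ such that $\partial \mathcal{S}_B \cap \mathcal{N}$ is the graph of a concave function $U_*$ with $\det D^2(-U_*) \geq K_0 \psi(|\nabla U_*|)$ and with all slopes bounded by $\lambda_0 - \varepsilon$ (the strict gap comes from continuity of slopes and compactness). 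Above $T_0$ one has $U_* = u_*$; below $T_0$, $U_*$ realises the Blaschke reflection surface as a graph of a function which is still a Monge-Amp\`ere super-solution.

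The slope deficit $\varepsilon$ is exactly the room needed to exhibit a $\widetilde u \in \mathbb{W}_+$ with $\widetilde u(y_0) < u_*(y_0)$ at some point $y_0$, which would contradict $u_* = \inf \mathbb{W}_+$. I would construct $\widetilde u$ as follows: for a small $\delta > 0$, set $V_\delta := U_* - \delta$ on $\mathcal{N}$, which is concave, satisfies $\det D^2(-V_\delta) = \det D^2(-U_*) \geq K_0 \psi(|\nabla V_\delta|)$ (the vertical shift leaves $D^2$ and $|\nabla|$ unchanged), and still has slope at most $\lambda_0 - \varepsilon$ on its new free boundary $\{V_\delta = 0\} = \{U_* = \delta\}$, which lies strictly inside $\{u_* > 0\}$ near $x_0$. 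Then glue geometrically: replace the part of $\mathrm{Hull}(\graph{u_*})$ lying over a small ball $B' \ni x_0$, chosen with $\overline{B'} \cap \overline{\Omega} = \emptyset$, by the convex body whose upper boundary agrees with $\graph{V_\delta}$ inside $B'$ and with $\graph{u_*}$ outside, and let $\widetilde u$ denote its upper boundary. Since $V_\delta = u_* - \delta$ wherever both are positive, $\widetilde u < u_*$ on a nontrivial subset of $B'$.

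The main obstacle is verifying that $\widetilde u \in \mathbb{W}_+$ in the sense of Definition \ref{defn-Aleks}. Concavity is automatic from the convex-body construction; the Dirichlet condition $\widetilde u = h_0$ on $\partial \Omega$ holds because $B'$ is disjoint from $\overline{\Omega}$; the slope at regular free-boundary points of $\widetilde u$ coming from $V_\delta$ is at most $\lambda_0 - \varepsilon < \lambda_0$, while on the unmodified portion it is inherited from $u_*$ and so bounded by $\lambda_0$. The delicate step is the Monge-Amp\`ere lower bound across the interface cylinder $\partial B' \times \R$, where the convex-hull operation may introduce transitional pieces. Here the same partition-and-Aleksandrov argument as in \eqref{omega-est} applies: any Borel set in the positivity region of $\widetilde u$ is split according to whether $\widetilde u = u_*$ or $\widetilde u = V_\delta$ on it, reducing to the super-solution inequalities already established for the two pieces, while transitional contributions are supported on sets where the gradient image is covered by more than one branch and hence are null by Aleksandrov's theorem. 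The resulting $\widetilde u \in \mathbb{W}_+$, strictly smaller than $u_*$ at points of $B' \cap \{u_* > 0\}$, contradicts the minimality $u_* = \inf \mathbb{W}_+$ and completes the proof.
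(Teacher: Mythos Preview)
Your opening reduction is right: it suffices to rule out $|\nabla u_*(x_0)|<\lambda_0$, and Lemma~\ref{blya} and Corollary~\ref{pin} give you the local concave super-solution graph $U_*$ with slopes strictly below $\lambda_0$ near $x_0$. The gap is in the competitor construction.

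Your competitor is built from the \emph{vertical} shift $V_\delta=U_*-\delta$. On the set $\{u_*>0\}$ near $x_0$ one has $U_*=u_*$, hence $V_\delta=u_*-\delta<u_*$ \emph{everywhere} there; in particular $V_\delta<u_*$ on $\partial B'$. Consequently there is no convex body whose upper boundary ``agrees with $\graph{V_\delta}$ inside $B'$ and with $\graph{u_*}$ outside'': that prescription has a jump of size $\delta$ along $\partial B'$, and a concave function is continuous in the interior of its domain. If you repair this by taking a genuine convex hull, the upper envelope near $\partial B'$ is neither $u_*$ nor $V_\delta$ but an affine bridge, and on that bridge the Monge--Amp\`ere measure vanishes while $K_0\psi>0$; so the inequality in Definition~\ref{defn-Aleks}(a) fails on a set of positive Lebesgue measure. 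Your appeal to the argument around \eqref{omega-est} does not save this: that argument works for the pointwise \emph{minimum} of two super-solutions, where every domain point lies in one of the two coincidence sets, whereas your transitional region lies in neither. Aleksandrov's theorem controls the measure of the \emph{gradient image} of multiply-covered slopes, not the domain-measure of a flat piece; it cannot manufacture positive curvature on an affine patch. So $\widetilde u\notin\mathbb W_+$, and no contradiction is reached.

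The paper's proof avoids exactly this obstruction by replacing the vertical shift with a \emph{tilt}: one subtracts the linear function $\lambda_0(x_1-\delta)$ (the extreme support plane shifted towards $\Omega$), which cuts a bounded convex cap $\mathscr D_\delta$ from the Blaschke body, and then solves the Dirichlet problem \eqref{tiltup-u} for $\widehat u_\delta$ with \emph{zero} data on $\partial\mathscr D_\delta$. After undoing the tilt, the resulting $u_\delta$ matches $u_*$ continuously along $\partial\mathscr D_\delta\cap\{u_*>0\}$, so $\min(u_*,u_\delta)$ is concave and the \eqref{omega-est} argument applies legitimately. The remaining work is to check that $u_\delta$ meets $T_0$ with slope $\le\lambda_0$ for $\delta$ small, which follows by letting $\delta\downarrow 0$ and using that the Blaschke extension has slope $<\lambda_0$ at $x_0$. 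The essential idea you are missing is that the competitor must agree with $u_*$ on the boundary of the modification region; a tilt by an extreme support plane arranges this, a vertical shift cannot.
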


\begin{proof}
Assume for contradiction, that $x_0\in \Gamma_{u_*}$ is a regular point
where $|\nabla u_*(x_0)|<\lambda_0$.
Let $H_0$ be the hyperplane in $\R^{d+1}$ supporting the graph of $u_*$
at $(x_0, 0)$ and having slope $\lambda_0$.
Notice that $H_0$ is the tilted copy of the support hyperplane $H_{x_0}$.
By construction of the extension $\mathcal S_B$, we have that $H_0$ cuts a 
non-trivial cap from $\mathcal S_B$.

We next take $\delta>0$ small and let $H_\delta$ be the parallel translation of $H_0$
by $\delta>0$ towards $\Omega$. For $\delta>0$ small enough we get that
$\mathcal{S}_B$ is seen as a concave function $\widehat U_*$ from  $H_\delta$.
Since the equation is invariant with respect to  rotations and translations in $T_0$, without loss of generality we assume that $x_0=0$ and the inner normal at $x_0$ is in the direction of $x_1$ axis. Then $H_\delta$ can be represented as $x_{d+1}=\lambda_0(x_1-\delta)$. For sufficiently small $\delta$ 
the set $\mathscr D_\delta=\{x\in T_0: U_*>\lambda_0(x_1-\delta)\}$ is a convex domain that can be projected on 
$T_0$ in one-to-one fashion thanks to Corollary \ref{pin}.
%
%Since the PDE in \eqref{prob-main} is invariant\footnote{It is not in general. The pde for 
%$\widehat u_\delta$ is $\det(-D^2 \widehat u_\delta (x'))=K_0
%\left[\frac{\nu\cdot e_{d+1}}{\nu\cdot e'_{d+1}}\right]^{d+2} \psi(|\na u_\delta(x)|)$} 
%
%under rotation of the space,
%we may assume that $H_\delta = \{x'_{d+1}=0\}$ after rotation in a new coordinate system $x_1', x_2', \dots , x_{d+1}'$ 
Let $\widehat U_*=u_*-\lambda_0(x_1-\delta)$ and $\widehat  u_\delta$ be the unique solution to 
\begin{equation}\label{tiltup-u}
\left\{
\begin{array}{lll}
\det D^2(-\widehat u_\delta) = K_0 \psi( |\nabla \widehat u_\delta+\lambda_0e_1| ) &\text{ in } \mathscr D_\delta, \\%\{\widehat U_*>0\}, \\
\widehat u_\delta = 0 \ \ \  &\text{ on }  \p\mathscr D_\delta.%\graph{\widehat U_*}\cap H_\delta.
\end{array}
\right.
\end{equation}

We claim that $\widehat  u_\delta< \widehat U_*$. Indeed, to see this it is enough to show that
\begin{equation}\label{a1}
\det D^2 (-\widehat U_*) > K_0 \psi( |\nabla \widehat U_*+\lambda_0e_1| )  \qquad \text{ in } \{\widehat U_*>0\} ,
\end{equation}
as then the desired inequality will follow from the comparison principle.
Observe, that we have a non-strict inequality in \eqref{a1} by construction
(see e.g. the proof of Lemma \ref{Lem-subsol-exists}).
For the strict inequality, we notice that for each $X$ on the graph of $ U_{*}$
there exists (by compactness) a reflected function $U_{*,x}$ (representing $\mathcal S_x^-$) touching the graph of 
$U_{*}$ at $X$  having its pull back on $\mathcal S_B\cap \graph {u_*}$ as interior point (which is possible by construction) 
and staying above $\graph{  U_{*}}$.
Since $\widehat  U_{*, x} :=U_{*, x}-\lambda_0(x_1-\delta)$ solves the PDE in \eqref{a1} with equality, then Hopf's lemma (see Lemma \ref{Lem-smallest-solves-PDE})
implies that the two functions $\widehat  U_{*,x}$ and $\widehat  U_{*}$ must coincide
unless we have a strict inequality in \eqref{a1}.

\smallskip

We thus obtain that $\widehat u_\delta< \widehat U_*$, and to complete the argument it is left to
show that $u_\delta=\widehat u_\delta+\lambda_0(x_1-\delta)$
%, defined by $\graph{u_\delta}=\graph {\widehat u_\delta}\cap \{x_{d+1}>0\}$ 
intersects $\R^d\times \{0\}=\{x_{d+1}=0\}$ by a slope bounded above by $\lambda_0$.
This will then produce a smaller super-solution to \eqref{prob-main}, contradicting the
definition of $u_*$. 
Assume that for any $\delta>0$ small there exists a point $x_\delta \in \Gamma_{u_\delta}$
such that the slope of $u_\delta$ at $x_\delta$ is larger than $\lambda_0$.
Extracting a sequence $\delta_j \to 0$ we get that $x_{\delta_j} \to x_0$,
and $u_{\delta_j} \to U_*$ in the set $\{U_* \leq 0\}$, where $U_*$ is defined (near $x_0$) so that 
$\graph{U_*}=\{-\e<x_{d+1}\}\cap \mathcal S_B$ for  some small 
$\e>0$ depending on $x_0$, see Lemma \ref{pin}.
But then we should have that the support plane of $u_{\delta_j}$ converges to
the support plane of $U_{*}$ at $x_0$ as $j\to \infty$.
However, the slope of (any) limit of these planes has magnitude at least $\lambda_0$,
which by construction cannot be a support plane of the Blaschke extension at $x_0$.
This contradiction completes the proof of the gradient condition on the free boundary.
\end{proof}

\subsection{$C^{\infty}$ regularity of the free boundary}
In this section we complete the proof of Theorem B.
We recapitulate the statement as follows 
\begin{prop}\label{prop-C11}
%Fix $r>0$ and $R>0$ such that a ball with radius $r$ rolls freely inside $\Om$,
%and $\Om$ rolls freely inside a ball of radius $R$.
%Then, 
There is $K_0>0$ small enough depending only on $\lambda_0$, $\Om$, and $\psi$ such that 
for any $0<K<K_0$ the minimal solution $u_*$ to \eqref{prob-main}
with curvature measure $K$ has $C^\infty$ smooth free boundary.
% admitting a rolling ball from inside
%of radius $r/2$ and a rolling ball from outside of radius $2R$.
\end{prop}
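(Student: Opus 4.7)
The strategy is to first upgrade $\Gamma_{u_*}$ to $C^{1,1}$ using two-sided uniform touching balls, and then bootstrap to $C^\infty$ via a Legendre transform together with the classical boundary regularity theory for Monge-Amp\`ere equations. The interior smoothness of $u_*$ in $\{u_*>0\}\setminus\overline\Om$ is already granted by Lemma~\ref{Lem-smallest-solves-PDE}, and the identity $|\na u_*|=\lambda_0$ at every regular free boundary point by Lemma~\ref{Lem-FB-cond-elliptic}, so the only remaining issue is the regularity of $\Gamma_{u_*}$ as a set.

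To install the touching balls I would combine the paraboloid super-solutions with the Blaschke extension. An exterior ball of radius $R_1=R_1(K_0,\lambda_0,\Om,\psi)>0$ is obtained by rolling the paraboloids $P_{x_0}$ of Lemma~\ref{Lem-subsol-exists} along $\partial\Om$: since $u_*\le P_{x_0}$ and the zero set of each $P_{x_0}$ is a sphere, the argument already used in Lemma~\ref{Lem-C11} delivers a sphere tangent to $\Gamma_{u_*}$ at the corresponding point. An interior ball of radius $R_2>0$ (lying inside $\{u_*=0\}$) comes from the Blaschke body $\mathcal{S}_B$: by Lemma~\ref{blya} and Corollary~\ref{pin}, near any regular $x\in\Gamma_{u_*}$ the set $\partial\mathcal{S}_B$ is the graph of a concave super-solution whose extreme supporting plane at $x$ has slope exactly $\lambda_0$, so an inscribed ball rolled against $\partial\mathcal{S}_B\cap\{x_{d+1}<0\}$ projects onto a Euclidean ball in $\{u_*=0\}$ tangent to $\Gamma_{u_*}$ at $x$. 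The smallness of $K_0$ is what keeps $R_1$ and $R_2$ bounded below; quantitatively, as $K_0\to 0$ both radii approach the conical radius \eqref{r1} of the homogeneous problem of Theorem~A.

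Once $\Gamma_{u_*}\in C^{1,1}$ and $u_*$ extends as $C^{1,1}$ up to the free boundary, I would apply the Legendre transform $u_*^\#(\xi):=\sup_{x}[x\cdot\xi+u_*(x)]$. By strict concavity of $u_*$ (Lemma~\ref{Lem-smallest-solves-PDE}), the map $x\mapsto -\na u_*(x)$ is a bi-Lipschitz homeomorphism from $\{u_*>0\}\setminus\overline\Om$ onto an annular region $A\subset B(0,\lambda_0)$ whose outer boundary is the sphere $\{|\xi|=\lambda_0\}$ and whose inner boundary is the smooth, uniformly convex image of $\partial\Om$. The dual $u_*^\#$ then satisfies
\[
\det D^2 u_*^\#(\xi) = \frac{1}{K_0\,\psi(|\xi|)}\qquad\text{in }A,
\]
with smooth data prescribed on both components of $\partial A$. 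Caffarelli's boundary regularity theory for Monge-Amp\`ere on uniformly convex $C^\infty$ domains then yields $u_*^\#\in C^\infty(\overline A)$, and undoing the Legendre transform produces $u_*\in C^\infty$ up to $\Gamma_{u_*}$, whence $\Gamma_{u_*}\in C^\infty$.

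The main obstacle is the uniform interior ball in the first step. The ruled structure that made Lemma~\ref{Lem-C11} transparent is unavailable for $K_0>0$, which is precisely why the Blaschke extension was introduced; verifying that $\mathcal{S}_B$ genuinely supports an interior rolling ball, uniformly in $x\in\Gamma_{u_*}$, requires $K_0$ small enough to rule out degenerate configurations of reflected supporting planes and to keep the reflected paraboloidal barriers trapped inside $\mathcal{S}_B$. This is the delicate calibration underlying the smallness threshold in the statement, and once accomplished, the Legendre duality step is essentially soft.
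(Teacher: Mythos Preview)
Your second step contains a genuine circularity. On the outer sphere $\{|\xi|=\lambda_0\}$ the Dirichlet value of the Legendre dual is
\[
u_*^\#(\xi)=x_\xi\cdot\xi,\qquad x_\xi\in\Gamma_{u_*},\ -\na u_*(x_\xi)=\xi,
\]
which is precisely the support function of $\Om_u:=\mathrm{Hull}(\Gamma_{u_*})$. Its smoothness is \emph{equivalent} to the smoothness of $\Gamma_{u_*}$, so invoking Caffarelli's Dirichlet boundary regularity with ``smooth data prescribed on both components of $\partial A$'' assumes the conclusion. The way out, and this is what the paper does, is to observe that the free boundary condition translates into an \emph{oblique} condition on the sphere, namely $\lambda_0\,\partial_\nu v+v=0$ on $|\xi|=\lambda_0$ for $v=u_*^\#$; this condition is intrinsically smooth and does not reference $\Gamma_{u_*}$. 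Urbas's $C^{1,1}$ estimates for oblique Monge--Amp\`ere problems then give $|D^2 v|\le C$ on the sphere, which pulls back to $D^2(-u_*)\ge c_0\,\mathrm{Id}$ near $\Gamma_{u_*}$, i.e.\ an \emph{exterior} rolling ball for $\Om_u$.

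The paper then takes a route different from yours for the $C^\infty$ upgrade: rather than bootstrapping the dual problem, it uses the exterior ball to run the Perron construction a second time with $\Om_u$ playing the role of the fixed domain (the rolling ball supplies the curvature bound needed in Lemma~\ref{Lem-subsol-exists}), producing a solution $U$ on the outside of $\Om_u$ with $U=0$ and $|\na U|=\lambda_0$ on $\partial\Om_u$. Gluing $u_*$ and $U$ across $\Gamma_{u_*}$ yields a weak solution in a full neighbourhood of the free boundary, and Pogorelov's \emph{interior} estimates finish the job. This avoids any boundary regularity theory beyond the $C^{1,1}$ oblique estimate.

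Your first step also has a gap. The paraboloids $P_{x_0}$ satisfy $u_*\le P_{x_0}$, so their zero spheres contain $\Om_u$, but there is no mechanism forcing them to \emph{touch} $\Gamma_{u_*}$; the shared line segment that made this work in Lemma~\ref{Lem-C11} is a feature of the ruled ($K=0$) solution and disappears for $K>0$. The paper handles the preliminary $C^1$ regularity of $\Gamma_{u_*}$ instead by a compactness argument: if uniform $C^1$ failed along $K_j\downarrow 0$, the limit would violate Theorem~A. The interior ball you attribute to the Blaschke body is not used in the paper at all, and you yourself flag it as the main obstacle; in the paper's scheme it is unnecessary.
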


The idea is to reduce the regularity of the free boundary to interior regularity
by extending a solution from the free boundary below $\R^d\times \{0\}$.
For this we first show that the free boundary is $C^1 $ regular for small
enough $K$, by reducing matters to the case of $K=0$. Relying on $C^1$ regularity
of the free boundary and interior smoothness of solutions we show that
the free boundary rolls freely inside a ball of a uniform radius.
This enables us to apply the same construction, as we used for establishing
the existence of a solution, starting from the convex hull
of the free boundary as the initial domain $\Omega$.

\begin{sublem}\label{sublem-C1}
Assume that $\psi\in C^\infty$ such that $\psi>0$ and 
$\sup_{B(0, \lambda_0)}\psi(\xi)<\infty$. Let $u_K$ be the minimal solution of 
\begin{equation}
\begin{cases}  
\det D^2 (-u_K)=K\psi( | \na u_K |), &\text{$ \{ u_K>0  \} \setminus \overline{\Om}$}, \\ 
 u_K=h_0 ,&\text{$ \partial \Omega    $}, \\
 |\nabla u_K| =\lambda_0, &\text{$\Gamma_{u_K}$}.
 \end{cases}
\end{equation}
corresponding to the given constant $K>0$.
\begin{itemize}
\item [$\bf 1^\circ$]
There is a $K_0$ small such that for any $0<K<K_0$ the normal of the free boundary $\fb{u_K}$ 
is uniformly continuous (consequently $\Gamma_{u_K}$ are uniformly $C^1$).
In other words, 
there is a critical $K_0$ small such that the following holds: 
for every $\e>0$ there is $\delta>0$ such that if 
\[\sup_{\begin{subarray}{c}x_K,  y_K\in \fb {u_K}\\ 
K\in (0, K_0)\end{subarray}}
|x_K-y_K|<\delta
\]
then 
\[\sup_{\begin{subarray}{c}
x_K,  y_K\in \fb {u_K}\\ 
K\in (0, K_0)
\end{subarray}}|\nu(x_K)-\nu(y_K)|<\e.\]
\item [$\bf 2^\circ$]
Moreover $\na u_K|_{\fb{u_k}}=\lambda_0 \nu$ where 
$\nu$ is the unit inner normal of $\fb{u_K}$. 
\end{itemize}
\end{sublem}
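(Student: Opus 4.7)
The plan for part $\mathbf 1^\circ$ is to reduce uniform $C^1$ regularity of the free boundary to the case $K=0$, for which $C^{1,1}$ regularity is established in Theorem A; part $\mathbf 2^\circ$ will then follow from $\mathbf 1^\circ$ combined with Lemma \ref{Lem-FB-cond-elliptic}.

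First I would show that as $K\to 0^+$, the minimal solutions $u_K$ converge uniformly on $\R^d$ to $u_0$, the unique solution of \eqref{prob-main} with $K_0=0$ (Theorem A, Lemma \ref{Lem-uniq-0case}). The lower bound $u_K\geq u_0$ is given by Lemma \ref{Lem-non-degeneracy}, while a uniform-in-$K$ upper bound is provided by the paraboloid super-solution from Lemma \ref{Lem-subsol-exists}, whose construction is valid for all $K$ satisfying \eqref{ineq-star}. Together with the interior Lipschitz estimate \cite[Lemma 3.1]{Bak} applied to the uniform $L^\infty$-bound $0\le u_K\le h_0$, the family $\{u_K\}$ is equi-Lipschitz on compact subsets of its positivity region, so by Arzel\`a--Ascoli it is precompact. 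By weak continuity of the generalized Monge--Amp\`ere measure $\mu_\psi$ (\cite[Theorem 9.1]{Bak}), any accumulation point $u_\infty$ has zero Monge--Amp\`ere mass in its positivity set, retains $u_\infty=h_0$ on $\partial \Omega$, and satisfies the free boundary slope $\lambda_0$ via the limiting argument used in Proposition \ref{prop-exist}. Uniqueness Lemma \ref{Lem-uniq-0case} then gives $u_\infty=u_0$, so the full family converges.

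Next, uniform convergence $u_K\to u_0$ combined with Lemma \ref{Lem-non-degeneracy} yields Hausdorff convergence $\Gamma_{u_K}\to \Gamma_{u_0}$ as $K\to 0^+$; I would obtain the uniform modulus of continuity of $\nu$ by contradiction. Suppose there exist $\varepsilon_0>0$ and sequences $K_n\in (0, K_0)$, $x_n, y_n\in \Gamma_{u_{K_n}}$ with $|x_n-y_n|\to 0$ yet $|\nu(x_n)-\nu(y_n)|\geq \varepsilon_0$; after shrinking $K_0$ if necessary, one may pass to a subsequence with $K_n\to 0$. Extracting further, $x_n, y_n\to z\in \Gamma_{u_0}$ and $\nu(x_n)\to \nu_1$, $\nu(y_n)\to \nu_2$ with $|\nu_1-\nu_2|\geq \varepsilon_0$, both $\nu_i$ being support normals of $\Gamma_{u_0}$ at $z$. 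Since $\Gamma_{u_0}$ is $C^1$ (Theorem A), the support normal at $z$ is unique, contradicting $|\nu_1-\nu_2|\geq \varepsilon_0$; this yields the uniform modulus required in $\mathbf 1^\circ$. For part $\mathbf 2^\circ$, $\mathbf 1^\circ$ ensures every boundary point is regular, so Lemma \ref{Lem-FB-cond-elliptic} gives $|\nabla u_K|=\lambda_0$ on $\Gamma_{u_K}$; interior smoothness (Lemma \ref{Lem-smallest-solves-PDE} $\mathbf 2^\circ$) together with the $C^1$ character of $\Gamma_{u_K}$ ensures $\nabla u_K$ extends continuously to $\Gamma_{u_K}$, and since $u_K>0$ inside and vanishes on $\Gamma_{u_K}$ the gradient must point along $\nu$, giving $\nabla u_K=\lambda_0\nu$.

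The main obstacle is the first paragraph: establishing uniform convergence $u_K\to u_0$ as $K\to 0^+$. The delicate point is preserving the free boundary slope condition in the limit, since each $\Gamma_{u_K}$ is not a priori known to be smooth at non-regular points and the right-hand side $K\psi(|\nabla u_K|)$ depends on the gradient. This is handled via the same passage to the limit used in Proposition \ref{prop-exist}, using weak continuity of $\mu_\psi$ and compactness of extreme support hyperplanes attached to convex sets, which together transfer the slope identity from the approximants to the limit.
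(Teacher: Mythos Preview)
Your overall strategy matches the paper's: for $\mathbf 1^\circ$ both arguments pass to a subsequence $K_j\downarrow 0$, identify the limit with the unique $K=0$ solution $u_0$ of Theorem~A, and derive a contradiction from the $C^{1}$ regularity of $\Gamma_{u_0}$; for $\mathbf 2^\circ$ both use $\mathbf 1^\circ$ to make every free boundary point regular and then read off $\nabla u_K=\lambda_0\nu$.

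There is, however, a genuine gap in the step where you claim the limit $u_\infty$ inherits the free boundary slope condition ``via the limiting argument used in Proposition~\ref{prop-exist}''. That argument only produces \emph{some} support hyperplane of slope $\lambda_0$ at each regular point $z_0\in\Gamma_{u_\infty}$; it does not show that this is the \emph{extreme} (smallest-slope) support plane. In Proposition~\ref{prop-exist} this was not an issue because the polyhedral approximants were lower envelopes of slope-$\lambda_0$ hyperplanes defined on all of $\R^d$, so the limit extends concavely past its free boundary and has a unique tangent plane at regular points of $\Gamma_{u_0}$. Here the $u_K$ are defined only up to $\Gamma_{u_K}$, so $u_\infty$ has no canonical extension beyond $\Gamma_{u_\infty}$, and a support plane of slope $\lambda'<\lambda_0$ at $z_0$ is not excluded by your compactness argument (such a plane need not support any $u_K$, since $u_K\ge u_\infty$). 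Consequently you cannot yet invoke uniqueness (Lemma~\ref{Lem-uniq-0case}) to conclude $u_\infty=u_0$.

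The paper closes this gap by a different mechanism that exploits the \emph{minimality} of each $u_{K_j}$. Assuming the extreme slope at some $z_0\in\Gamma_{u_\infty}$ is $\lambda<\lambda_0$, one observes (via Lemma~\ref{Lem-segment}, since $u_\infty$ has zero Monge--Amp\`ere mass) that the extreme support plane $H$ touches $\partial\widehat\Omega$ at some $\xi_0$. Tilting $H$ at $\xi_0$ to a plane $H_\delta$ of slope $\lambda+\delta<\lambda_0$ and solving the Dirichlet problem \eqref{tiltup-u} over the cap cut by $H_\delta$, the Aleksandrov maximum principle gives $\sup\widehat u_\delta\le CK_j$; for $K_j$ small this yields a super-solution strictly below $u_{K_j}$, contradicting minimality. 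This is the missing idea you need to replace the appeal to Proposition~\ref{prop-exist}.

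For $\mathbf 2^\circ$ your outline is correct but the sentence ``interior smoothness together with the $C^1$ character of $\Gamma_{u_K}$ ensures $\nabla u_K$ extends continuously'' is not self-evident: interior $C^\infty$ regularity alone does not force boundary continuity of the gradient. The paper supplies the short argument you need: for any sequence $x_i\to x_0\in\Gamma_{u_K}$ from the interior, pass to a subsequence with $\nabla u_K(x_{i_m})\to\xi_0$; the limiting tangent plane must be an extreme support plane at $x_0$, hence $\xi_0$ is collinear with $\nu$ and $|\xi_0|=\lambda_0$ by Lemma~\ref{Lem-FB-cond-elliptic}, so every subsequential limit equals $\lambda_0\nu$.
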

\begin{proof}
%We want to show that for any given $\e$ there is a $\delta$ such that $|\nu(x)-\nu(y)|<\e$ whenever 
%$|x-y|<\delta$. Here $\nu(x)$ is a normal of supporting planes at the point $x\in \fb u$.
 $\bf 1^\circ$ \ If the claim fails for some $\e_0>0$ then there are sequences $K_j\downarrow 0$, solutions  $u_j:=u_{K_j}$ to \eqref{prob-main},
 and points $x_j,y_j \in \fb{u_j}$ and such that 
\[|x_j-y_j|<\frac1j,\quad   \hbox{but}\quad |\nu(x_j)-\nu(y_j)|\ge \e_0>0.\]
%
%%% compactness
Recall that $u_j$ is the minimal solution defined by 
$u_j=\inf_{\W_j}v$, where
$$
\W_j:=\W_-(K_j, \lambda,\Om)=
\left\{v \ \hbox{concave} : \  \det( D^2 (-v)) \ge K_j \psi(|\na v|), \ \  \left|\na v({\Gamma_{u_K}})\right|\le\lambda_0\right\}.
$$
Note that  \eqref{ineq-star} holds. 
Clearly for $u_{1}$ we have $\dfrac{\det (-D^2 u_1)}{\psi^{}(| \na u_1 | )}=K_1 \ge K_j$ therefore
\[u_1\in \W_j.\]
Consequently $u_j\le u_1$ and 
\[\po {u_j}\subset \po{u_1},\quad  \forall j\ge 1.\]
This provides uniform bound for the convex bodies of $\mathscr K_j=\hbox{Hull}(\graph {u_j})$
and applying Blaschke's selection principle it follows that for some subsequence 
$j_m$ there  is a concave function 
$u_0=\lim_{j_m\to \infty}u_{j_m}$ such that $\det(-D^2 u_0)=0$ in $\{u_0>0\}\setminus\overline{\Om}$.

We claim that $\big| \na {u_0}\vert_{\Gamma_{u_0}} \big|=\lambda_0$. 
If not then there is a point $z_0\in\R^d\setminus \hbox{Hull}(\Gamma_{u_0})$ such that 
$u_0$ at $z_0$ has an extreme  support hyperplane $H$ with slope $\lambda<\lambda_0$. Due to uniform convergence near $\p \Om$ we see  that $H$ must touch $\p\Om$ at some $\xi_0$. 
Consequently we can tilt 
$H$ at $\xi_0$ and obtain another plane $H_\delta$ with larger slope $\lambda+\delta<\lambda_0$, 
again touching $\Om$ at $\xi_0$. Choosing $\delta$ small enough and 
using the construction of \eqref{tiltup-u} with respect to the hyperplane $H_\delta$
we can construct    a solution $\widehat u_\delta$ such that $\sup\widehat u_\delta\le C K_j$
for $K_j$ sufficiently small (this follows from the Aleksandrov maximum principle). Choosing $\delta$ small enough we can construct a super-solution from $\widehat u_\delta$ 
which is below $u_{K_j}$ and consequently we will reach   a contradiction since $u_{K_j}$ is minimal.

%%% conclusion
Passing to the limit we have that at $x_0=\lim_{j_m}x_{j_m}$ the support 
hyperplanes of codimension 1 at the free boundary of $u_j$ will converge to 
two distinct support hyperplanes with normals $\nu_1(x_0), \nu_2(x_0)$ at $x_0$. We get 
\[|\nu_1(x_0)-\nu_2(x_0)|\ge\e_0\]
which is a contradiction in view of Theorem A. 

\medskip 
$\bf 2^\circ$ Let ${x_i}\subset \po{u}$ such that $x_i\to x_0\in \fb u$. Let 
$\nu_0$ be the outer unit normal at $x_0$, which is unique by the first part $\bf 1^\circ$. 
We can extract a subsequence $i_m$ such that $\{\na u(x_{i_m})\}$ is convergent to some vector $\xi_0$, and the 
gradient estimate forces $|\xi_0|\le \lambda$. Since the 
tangent planes at $x_i$ must converge to an extreme  support plane of $\graph u$ at $x_0$, it follows that 
$\xi_0$ and $\nu_0$ are collinear. Suppose that $|\xi_0|<\lambda_0$ then  
$x_{d+1}=\xi_0\cdot (x-x_0)$ is an extreme  support hyperplane at $x_0$ which is in contradiction with the fact that 
at $x_0$  the free boundary condition is satisfied in the classical sense. Hence $|\xi_0|=\lambda_0$
and the desired result follows.
\end{proof}

\begin{lem}\label{lem-ext-ball}
Let $u$ be as in Proposition \ref{prop-C11}. Then there is a constant $R_0$ depending only on $\lambda_0, d, \psi$ such that 
for every $x_0\in \fb u$ there is $z_0$ such that $x_0\in \p B(z_0, R_0)$ and $\fb u\subset B(z_0, R_0)$. $R_0$ does not depend on $K_0$.
\end{lem}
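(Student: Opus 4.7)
The plan is to construct, at each $x_0 \in \fb u$, an external spherical barrier of radius $R_0$ independent of $K_0$, by comparing $u = u_*$ with the paraboloid super-solutions from Lemma \ref{Lem-subsol-exists}.

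Given $x_0 \in \fb u$, Sublemma \ref{sublem-C1} provides a unique outer unit normal $\nu_0$ to $\fb u$ at $x_0$. By strict convexity of $\Omega$, there is a unique $\xi_0 \in \partial \Omega$ with outer normal $\nu_0$. Rauch's inclusion principle (as invoked in Lemma \ref{Lem-subsol-exists}) produces the exterior tangent ball $B^* = B(z_*, r_*)$ at $\xi_0$, with $z_* = \xi_0 - r_* \nu_0$ and $r_* = 1/\kappa_0$. The radial paraboloid $P(x) = h_0 + \alpha(r_*^2 - |x - z_*|^2)$, with $\alpha$ chosen by \eqref{alpha-for-P} so that $|\nabla P| = \lambda_0$ on $\{P = 0\}$, has positivity set $\{P > 0\} = B(z_*, R)$ of radius $R = \sqrt{r_*^2 + h_0/\alpha}$; both $\alpha$ and $R$ depend only on $\lambda_0, h_0, \kappa_0$ and are therefore uniform in $K_0$. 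Under \eqref{ineq-star}, $P$ is a super-solution of \eqref{prob-main}, and by minimality of $u$ we get $u \leq P$ pointwise, so $\fb u \subset \overline{B(z_*, R)}$.

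To upgrade this enclosure into a tangent ball at $x_0$, consider $y^* = z_* + R \nu_0$, the unique point of $\partial B(z_*, R)$ with outer normal $\nu_0$. In the homogeneous case $K_0 = 0$, Lemma \ref{Lem-reg-FB-0case} (via the ruled structure of $u_0$) gives $x_0 = y^*$, so the enclosing ball is itself the rolling ball at $x_0$. For $K_0 > 0$ small, I translate the paraboloid by $\eta = x_0 - y^*$, obtaining $\tilde P(x) = P(x - \eta)$; the zero set of $\tilde P$ is the sphere $\partial B(x_0 - R\nu_0, R)$, tangent to $\fb u$ at $x_0$ with outer normal $\nu_0$. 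The resulting $R_0$ is then essentially $R$, enlarged by a uniform additive factor to absorb the displacement $|\eta|$, which in turn is controlled by comparison with the $K = 0$ limit via Lemma \ref{Lem-non-degeneracy} and the convergence of free boundaries as $K \to 0$ that is implicit in the proof of Sublemma \ref{sublem-C1}.

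The main obstacle is that the translated $\tilde P$ need not satisfy $\tilde P \geq h_0$ on $\partial \Omega$, so it fails to be a super-solution of the full problem \eqref{prob-main}, and we cannot invoke minimality directly to enforce $\fb u \subset \overline{\{\tilde P > 0\}}$. One resolution is to work with the envelope $v_* = \inf_{\xi \in \partial \Omega} P_\xi$ of the whole family of paraboloids (one for each $\xi \in \partial \Omega$) produced in the proof of Lemma \ref{Lem-subsol-exists}; this envelope is a genuine super-solution, its positivity set has boundary of uniformly bounded principal curvature determined by the radii $R(\xi)$, and the uniform $C^1$-continuity of the outer normal on $\fb u$ from Sublemma \ref{sublem-C1} together with the surjectivity of the Gauss map of $\partial \Omega$ onto $\mathbb{S}^{d-1}$ then forces $\fb u$ to meet $\partial \{v_* > 0\}$ at a point with any prescribed normal, producing the required tangent ball at $x_0$ with radius $R_0$ independent of $K_0$.
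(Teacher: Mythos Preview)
Your approach is fundamentally different from the paper's, and it contains a real gap at the decisive step.

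The paper does not use barriers at all. It passes to the Legendre transform $v$ of $-u$, which satisfies $\det D^2 v = 1/(K_0\psi(|z|))$ on the fixed ball $B(0,\lambda_0)$ with the oblique boundary condition $\lambda_0 v_\nu + v = 0$ on $|z|=\lambda_0$ (coming from Sublemma \ref{sublem-C1}). Taking $\log$ of the equation kills the $K_0$-dependence in all derivatives of the right-hand side, so Urbas's boundary $C^{1,1}$ estimates give $|D^2 v|\le C$ on $\partial B(0,\lambda_0)$ with $C$ independent of $K_0$. Pulling back yields $\liminf_{x\to x_0} D^2(-u)(x)\ge c_0\,\mathrm{Id}$ at every $x_0\in\fb u$, which is exactly the uniform exterior rolling-ball condition. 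The whole point is that the Legendre dual problem lives on a \emph{fixed} domain with \emph{fixed} boundary condition, so one can quote standard a~priori estimates.

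Your barrier route gives only an \emph{enclosing} ball, not a \emph{tangent} one. The paraboloid $P_{\xi_0}$ yields $\fb u\subset \overline{B(z_*,R)}$, but the point $y^*\in\partial B(z_*,R)$ with outer normal $\nu_0$ has no reason to equal $x_0$; for $K_0>0$ the solution $u$ is strictly below $P_{\xi_0}$ by the strong maximum principle, and the two zero sets are genuinely separated. You correctly flag that translating $P$ destroys the boundary condition, but your proposed fix via the envelope $v_*=\inf_\xi P_\xi$ does not work either: again $u<v_*$ strictly, so $\fb u$ lies strictly inside $\{v_*>0\}$ and never meets $\partial\{v_*>0\}$. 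Surjectivity of the Gauss maps of $\partial\Omega$, $\fb u$ and $\partial\{v_*>0\}$ onto $\mathbb{S}^{d-1}$ says nothing about the free boundaries touching. Finally, the appeal to ``convergence of free boundaries as $K\to 0$'' to bound $|\eta|$ is a limit statement, not a quantitative one; it does not produce a bound valid for all $K\in(0,K_0)$ simultaneously, which is what you need for $R_0$ to be independent of $K_0$. In short, barrier comparison gives containment but no curvature control; the paper gets curvature control directly via second-derivative estimates on the Legendre side.
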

\begin{proof}
Let the origin  $0\in \Om$ and $v$ be the Legendre transform of $-u$. Then we have that 
$\det D^2v=\frac1{K_0\psi(|z|)}$ in $0<|z|\le \lambda_0$ (cf. Lemma \ref{Lem-smallest-solves-PDE}). 
%In particular we can fix $\lambda$, such that $0<\lambda_0-\lambda$ small and $v$ 
On the boundary $|z|=\lambda_0$, in view of Sublemma \ref{sublem-C1}, we have that $v(z)=x\cdot  z=\lambda_0\nu\cdot \na v$, $\nu$ is the unit inner normal.
Thus $v$ verifies  the oblique boundary condition $\lambda_0 v_\nu+v=0$ on $|z|=\lambda_0$. Note that $v$ is $C^\infty$ smooth near the sphere $|z|=\lambda_0-\delta$, for sufficiently small $\delta>0$. 
Taking the logarithm $\log \det D^2 v=-\log K_0 -\log \psi(|z|)$ and applying the   $C^{1, 1}$
boundary estimates \cite[pp. 23-29]{Urbas-CVPDE}  we get 
\[\sup_{\p B(0, \lambda_0)}|D^2 v|\le C\]
where $C=C(d, \lambda_0, \|\log\psi\|_{C^{1,1}})$ does not depend on $K_0$.
Pulling back to $-u$ we infer that 
$$
\liminf\limits_{x\to x_0} D^2 (-u(x))\ge c_0 \hbox{Id}, \ \ \forall x\in \po u, \ \ \forall x_0\in \fb u.
$$  for some positive constant $c_0$ and the result follows.
\end{proof}

%Next we approximate $\fb u$ with $C^{\infty}$ surfaces $\{u=\delta\}, \delta>0$, as $\delta\downarrow 0$,  and construct a family of  auxiliary functions $u_\delta$
%which will allow us to extend $u$ across $T_0=\{x_{d+1}=0\}$.
%\begin{sublem}\label{lem-delta-ext}
%Let $S_\delta$ be a $C^{1,1}$ approximation of $\fb u$  for some $\delta>0$ small. Then there exists a
%convex set $S^\ast$ independent of $\delta$ and a  concave function $U_\delta$ such that $-U_\delta$ solves the following problem
%\begin{equation}
%\begin{cases}  
%\det D^2 (-U_\delta)=K_0, &\text{  $\left( S^\ast\setminus \overline{S_\delta} \right)$}, \\ 
 %U_\delta=h_0 ,&\text{  $ \partial S_\delta    $}, \\
 %|\nabla U_\delta| =\lambda_0, &\text{ $\partial S_\delta$}, 
 %\end{cases}
%\end{equation}
%\end{sublem}

Consider the problem \eqref{prob-main} with $K_0$ sufficiently small,
such that there exists a solution $u$ with free boundary
rolling freely inside a ball of radius $R_0$.
By Lemma \ref{lem-ext-ball} we have that smallness of $K_0$
is sufficient for this purpose.
Set $\Om_u = \mathrm{Hull}(\fb{u})$, we claim that in a neighbourhood of $\Om_u$ contained in $\R^d\setminus \overline{\Om_u}$ 
the minimal solution $u$ can be extended as a solution.

More precisely, there exists $U:\R^d\setminus \Om_u \to \R$ concave
solving the following problem:
\begin{equation}\label{U-extension}
\begin{cases}  
\det D^2 (-U)=K_0\psi(|\na U|), &\text{  $\left( \R^d \setminus \overline{\Om_u} \right)$}, \\ 
  U=0 ,&\text{  $ \fb u    $}, \\
 |\nabla U| =\lambda_0, &\text{ $\fb u  $}.
 \end{cases}
\end{equation}

To construct such extension, we will treat $\Omega_u$ as the new initial domain $\Omega$.
We next modify slightly the Definition \ref{defn-Aleks}, by defining the set
$\mathbb{W}_-=\W_-(K_0, \lambda_0, \Om_u)$ as the class of all concave functions $u:\R^d \to \R $
satisfying
\begin{equation}\label{super-sol-curv-ext}
\begin{cases}  
\det D^2 (-u)\geq K_0\psi(|\na u|), &\text{$ \{u<0 \}  \setminus \overline{\Omega_u} $}, \\ 
  u\equiv 0 ,&\text{$\overline{\Omega_u}    $}, \\
 |\nabla u|\le \lambda_0, &\text{$\partial \Omega_u $}.
 \end{cases}
\end{equation}

Next, relying on Lemma \ref{lem-ext-ball}, which ensures that $\Omega_u$
rolls freely inside a ball of some radius $R_0$, we invoke the 
Perron method utilized in Section \ref{subsec-existence}
with the gradient condition on the free boundary replaced by gradient
condition on the boundary of the initial domain. 
The details are the same, and we will omit them.

Finally, defining 
\begin{equation}
\widetilde u=
 \begin{cases}  
u&\text{  $ \{u>0\}\setminus \overline{\Om} $}, \\ 
 U ,&\text{  $ \{U<0\}    $}, \\
% |\nabla u^\ast| =\lambda_0, &\text{ $\partial S_0$},
 \end{cases}
\end{equation}
and using the fact that the gradients of $u$ and $U$ agree on the free boundary $\fb u$
we easily get that $\widetilde{u}$ defines a solution across the free boundary $\fb u$.
Applying the local $C^2$ estimates of Pogorelov \cite{Pog-MA}, \cite{Figalli} to $\widetilde{u}$ the result of Proposition \ref{prop-C11}
follows.

\bigskip

\begin{proof}[Proof of Theorem B]
Putting together Lemmas \ref{Lem-smallest-solves-PDE} and Lemma \ref{Lem-FB-cond-elliptic}
we get that $u_*$ defines a solution to \eqref{prob-main}.
The regularity of the free boundary $\fb{u_*} $is given by Proposition \ref{prop-C11}.
\end{proof}

\appendix

\section{Non existence of $K$-surfaces}

The aim of this appendix, is to show, based on an example of radial solutions,
that depending on the parameters of \eqref{prob-main}, there can be no solution to
\eqref{prob-main}.

\subsection{Radially symmetric solutions}\label{subsec-radial}
In this subsection, using the moving plane method of Aleksandrov,
we show that any solution to \eqref{prob-main} with $K_0>0$ and $\Om$ a ball,
must be radially symmetric. Relying on this, we next show,
by an explicit example, that there might be no solution to \eqref{prob-main}
for general values of parameters.

\begin{lem}\label{lem-Aleksandrov}
Let $u$ be a uniformly convex solution of \eqref{prob-main} such that $\psi(\xi)=\psi(|\xi|)$, $\xi\in \R^d$, 
$\Om$ is a ball centred at the origin 
and let the free boundary $\Gamma_u$ be $ C^2$.
Then $u$ is radially symmetric and the free boundary is sphere of codimension 2.
\end{lem}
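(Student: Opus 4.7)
\noindent The plan is to apply Aleksandrov's moving plane method in the spirit of Serrin's classical result on overdetermined boundary value problems. The overdetermined nature here is precisely the coexistence of the Dirichlet condition $u=0$ and the Neumann condition $|\nabla u|=\lambda_0$ on $\Gamma_u$. Reflection invariance is the starting point: because $\Om$ is a ball centred at the origin and the right-hand side $K_0\psi(|\nabla u|)$ depends only on $|\nabla u|$, the equation together with the full overdetermined boundary data is invariant under every Euclidean reflection through the origin. In particular, if $R$ denotes such a reflection, then $u\circ R$ is a solution of \eqref{prob-main} on the reflected configuration.

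Fix $e\in \mathbb{S}^{d-1}$ and for $\lambda\in\R$ set $T_\lambda=\{x\cdot e=\lambda\}$, $x^\lambda=x-2(x\cdot e-\lambda)e$, the cap $\Sigma_\lambda=\{x\cdot e>\lambda\}\cap\{u>0\}$, and the reflected function $v_\lambda(x)=u(x^\lambda)$ on $\Sigma_\lambda$. Let $\lambda_{\max}=\sup_{\{u>0\}} x\cdot e$. For $\lambda$ just below $\lambda_{\max}$ the cap $\Sigma_\lambda$ is a thin sliver near the top of $\{u>0\}$; since $\lambda>0$ and $\Om$ is centred at the origin, the reflected cap $\Sigma_\lambda^{\,\lambda}$ stays inside $\{u>0\}\setminus\overline\Om$, so $v_\lambda$ is well defined and positive there. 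On $T_\lambda\cap\overline{\Sigma_\lambda}$ one has $v_\lambda=u$, while on $\Gamma_u\cap\overline{\Sigma_\lambda}$ one has $u=0<v_\lambda$. Both functions solve the same Monge--Amp\`ere equation with the same gradient-dependent right-hand side; linearising along the segment joining $u$ and $v_\lambda$ and using the uniform convexity of $u$ produces a uniformly elliptic linear operator $L$, without zeroth-order term, annihilating $w:=v_\lambda-u$, and the maximum principle yields $w\ge 0$ in $\Sigma_\lambda$.

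Now decrease $\lambda$ and put $\lambda^*=\inf\{\lambda\ge 0:\ v_\mu\ge u\text{ in }\Sigma_\mu\ \forall\mu\in[\lambda,\lambda_{\max})\}$. The goal is $\lambda^*=0$. By continuity $v_{\lambda^*}\ge u$ in $\Sigma_{\lambda^*}$, and if $\lambda^*>0$ the failure of monotonicity just past $\lambda^*$ must come from one of two obstructions, the obstruction $\Sigma_{\lambda^*}^{\,\lambda^*}\cap\overline\Om\neq\emptyset$ being ruled out by $\lambda^*>0$ and the centring of $\Om$: either (a) $w=v_{\lambda^*}-u$ vanishes at some interior point of $\Sigma_{\lambda^*}$, or (b) $\Gamma_u$ meets $T_{\lambda^*}$ tangentially at some $y_0\in\Gamma_u\cap T_{\lambda^*}$ (i.e.\ the outward normal of $\Gamma_u$ at $y_0$ lies in $T_{\lambda^*}$). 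In case (a) the strong maximum principle applied to $Lw=0$ forces $w\equiv 0$ on the component of $\Sigma_{\lambda^*}$ containing the touching point, so $u$ is symmetric across $T_{\lambda^*}$, contradicting the fact that $\Om$ is not symmetric across $T_{\lambda^*}$ when $\lambda^*>0$. In case (b) the overdetermined conditions $v_{\lambda^*}-u=0$ and $|\nabla v_{\lambda^*}|=|\nabla u|=\lambda_0$ on a neighbourhood of $y_0$ in $\Gamma_u$ force $w$ and $\nabla w$ both to vanish at $y_0$; Serrin's boundary-point lemma at a corner, applicable because $\Gamma_u$ is $C^2$ and $L$ is uniformly elliptic, then forces a second-order vanishing of $w$ at $y_0$ that is incompatible with $w\not\equiv 0$, producing the same contradiction as in (a).

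Hence $\lambda^*=0$ and $v_0\ge u$ on $\{x\cdot e>0\}\cap\{u>0\}$; running the argument with $-e$ in place of $e$ gives the reverse inequality, whence $u(x)=u(x^0)$ for every $x$, i.e.\ $u$ is symmetric across $\{x\cdot e=0\}$. Since $e\in\mathbb{S}^{d-1}$ was arbitrary, $u$ is radially symmetric and $\Gamma_u$, being a level set of a radial function, is a sphere. The principal obstacle is step (b): the tangential contact of the moving plane with the free boundary obliges us to use Serrin's corner variant of the Hopf lemma, whose hypotheses depend essentially on the $C^2$ regularity of $\Gamma_u$ and on the uniform ellipticity of the linearised operator, itself provided by the uniform convexity of $u$.
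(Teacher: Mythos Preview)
Your approach is the same as the paper's---Aleksandrov moving planes combined with Serrin's overdetermined-problem machinery---but your case analysis at the critical position $\lambda^*$ is incomplete. You list only two obstructions: (a) $w$ vanishes at an interior point of $\Sigma_{\lambda^*}$, and (b) $T_{\lambda^*}$ meets $\Gamma_u$ orthogonally. You omit the standard \emph{first} Serrin obstruction: the reflected piece of free boundary $(\Gamma_u\cap\{x\cdot e>\lambda^*\})^{\lambda^*}$ becomes internally tangent to $\Gamma_u$ at some point $P$ with $P\cdot e<\lambda^*$, away from $T_{\lambda^*}$. At that moment $w$ vanishes at the \emph{boundary} point $P^{\lambda^*}\in\Gamma_u\cap\partial\Sigma_{\lambda^*}$, which is neither interior to $\Sigma_{\lambda^*}$ nor on $T_{\lambda^*}$, so neither of your cases applies. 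This is exactly the situation handled by the ordinary Hopf boundary-point lemma: at $P^{\lambda^*}$ the two free-boundary pieces are tangent, so the inner normals agree, and $|\nabla u|=|\nabla v_{\lambda^*}|=\lambda_0$ forces $\partial_\nu w(P^{\lambda^*})=0$, contradicting Hopf unless $w\equiv 0$. The paper treats this as its Case~2; without it the dichotomy you assert is simply false and the argument does not close.

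A secondary issue: in your case (b) you claim that $v_{\lambda^*}-u=0$ and $|\nabla v_{\lambda^*}|=\lambda_0$ hold \emph{on a neighbourhood of $y_0$ in $\Gamma_u$}. They do not: for $x\in\Gamma_u$ near $y_0$ with $x\cdot e>\lambda^*$, the reflection $x^{\lambda^*}$ is generically off $\Gamma_u$, so $v_{\lambda^*}(x)>0$. What is true is only that $w(y_0)=0$ and $\nabla w(y_0)=0$ at the single point $y_0$ (the gradient identity because $\nabla u(y_0)=\lambda_0\nu_0$ with $\nu_0\perp e$, hence fixed by the reflection $R_e$). To invoke Serrin's corner lemma one must further verify $D^2w(y_0)=0$, which the paper does by computing the Hessian of $u$ at $y_0$ explicitly from the free boundary condition and the equation; you skip this step.
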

\begin{proof}
We will use Aleksandrov's moving plane method \cite{Serrin}. Let $e_1\in\{x_{d+1}=0\},$ be the unit direction of the $x_1$ axis and 
$T$ a hyperplane perpendicular to $e_1$. In other words, $T$ is defined as 
the set of all points $X\in \R^{d+1}$ such that $x_1=t$ where $t\in \R$.
This is a one parameter family of parallel planes moving orthogonal to $e_1$.
Let us move the plane $T$ from $t=-\infty$ until it first time hits  $\fb u$, say at $t=t_1$.
Then for each $t>t_1$ we let 
$$D_t=\bigcup_{s\in(t_1, t)}T_t\cap\{u>0\}$$ 
and $D^*_t$ be the symmetric domain  of $D_t$ with respect to $T_t$. 
Similarly, we let $S_t$ to be the surface that $T$ cuts from $\graph u$ and $S^*_t$ its symmetric reflection with respect to $T_t$.

We consider several cases:

{\bf Case 1:}
Let $t_0>t_1$ be such that $S_t^\ast$ and $\graph u$ touch first time at some interior point 
$x_0\in \{u>0\}\setminus\overline{\Om}$ then letting $u^*(x)=u(x^*)$ where $x^*$ is the symmetric point 
determined from the equality
$$x_1^\ast =2t-x_1,\quad  x_i^\ast=x_i, i=2, \dots, d.$$
If we denote $(x_1, x_2, \dots, x_d)=(x_1, y)$ then 
$u^\ast (x^\ast)= u(2t-x_1, y)$. 
Clearly, 
\begin{eqnarray*}
\det D^{2}(-u^*(x))&=&(\det D^2(-u))(2t-x_1, y) \\
&=&\psi(\na u^\ast)\\
&=&\psi((\na u)(2t- x_1, y))
\end{eqnarray*} 
and consequently we have that 
\begin{eqnarray}
\log\psi(\na u^*)-\log\psi(\na u)&=&\log \det D^{2}(-u^*(x))-\log\det D^{2}(-u(x))\\
&=&a_{ij}[u_{ij}^*(x)-u_{ij}(x)]
\end{eqnarray}
where 
\begin{eqnarray*}
a_{ij}(x)&=&\int_0^1\frac{\p\det(-D^{2}(su^*(x)+(1-s)u(x)))}{\p r_{ij}}ds\\
&=&
\int_0^1\left[{\hbox{cof}\left(-D^{2}\left(su^*(x)+(1-s)u(x)\right)\right)_{ij}}\ \right]ds.
\end{eqnarray*}
Here $\hbox{cof}$ is the cofactor matrix.
  By assumption $u$ is uniformly convex, i.e. 
  $-D^2u(x)\ge c\hbox{Id}, c>0, x\in \{u>0\} \setminus \overline{\Om}$ thus $a_{ij}$ is uniformly elliptic matrix
  with ellipticity constants depending only on $c$ and $C^{2, \alpha}$ norm of $u$ (note that if $\fb u\in C^{2, \alpha}$ then from existing theory of $K$-surfaces we can conclude that 
  $\graph u\in C^{2,\alpha}(\overline{\{u>0\}\setminus\Om})$). 
 Similar argument implies that 
  \[
  \log\psi(\na u^\ast)-\log\psi(\na u )=b\cdot(\na u^\ast-\na u), \quad b=\int_0^1\frac{\na\psi(s\na u^\ast+(1-s)\na u)}{\psi(s\na u^\ast+(1-s)\na u)}ds.
  \]
At the touching point we can apply Hopf's maximum principle to the linearised equation 
$a_{ij}w_{ij}-b_iw_i=0, w=u^\ast-u$ to conclude that $u^*=u$ in 
$D^*_t$. Hence if we have interior touch between $S_t^*$ and $\graph u$ then 
$\graph u$ must be symmetric w.r.t. $T_{t_0}$.

\medskip

{\bf Case 2:} Suppose that the first touch of $S_t^*$ and $\graph u$ happens at the boundary 
$x_0\in \fb u\cap \p D^*_t$. Since we are assuming that the free boundary 
is  $C^{2, \alpha}$ smooth then at $x_0$ we have that $|\na u(x_0)|=|\na u^*(x_0)|=\lambda_0$.
We can apply Hopf's lemma at $x_0$ to infer that $\graph u$ is symmetric w.r.t. $T_{t_0}$

\medskip

{\bf Case 3:} Finally let us consider the case when $x_0\in T_{t_0}\cap \fb u$, where 
$t_0$ is the value for which $S_t^*$ and $\graph u$ touch first time.
For this case we need to apply a well known computation of Serrin \cite{Serrin}.
We consider an orthonormal frame at $x_0$, the $x_n$ 
axis being directed along the inward normal to $\fb u$. Assuming that 
near $x_0$ the free boundary has the representation 
\[x_n=\phi(x_1, \dots, x_{n-1})\]
for a convex function $\phi\in C^{2,\alpha}$.
Using Serrin's computation \cite[pp. 307-308]{Serrin}  we have 
\[
\sbox0{$\begin{matrix}{}&{}&{}\\{}& -\lambda \phi_{ij}&{}\\ {}&{}&{}\end{matrix}$}
D^2 u(x_0)=\left[
\begin{array}{c|c}
\usebox{0}
%\makebox[\wd0]{\large$S_{ab}^{ik}(x_k-x_i)$}
&\makebox[\wd0]{\large $0$}\\
\hline
  \vphantom{\usebox{0}}\makebox[\wd0]{\large $0$}&\makebox[\wd0]{\large $u_{nn}$}
\end{array}
\right]
\]
From the equation $\det (-D^2 u)=K\psi(\na u)$ we find that 
$$-u_{nn}=\frac{\psi(\lambda)}{\lambda^{n-1}\det\phi_{ij}}.$$
Hence at $x_0$ $u$ and $u^*$ have identical gradient and Hessian. 
Applying the Lemma 2 from \cite{Serrin} we again conclude that 
 $\graph u$ is symmetric w.r.t. the plane $T_{t_0}$.

Summarizing, we see that in any case for each unit vector $e$ there is a position for the plane 
$T_t$ such that $\graph u$ is symmetric w.r.t. $T_{t_0}$. Suppose that 
$T_{t_0}$ (the one where the first touch happens for given unit vector $e$)
does not intersect $\Omega$. From above analysis it follows that 
$S_{t_0}^*$ must contain horizontal hole $\Omega\times\{1\}$.
But $S_{t_0}$ and $S_{t_0}^*$ are symmetric hence both must contain the horizontal 
holes. Thus $T_{t_0}$ must intersect $\Om=B(0, 1)$. 
Since $\Om$ is symmetric  w.r.t. 0 then it follows that $T_{t_0}$ passes through the 
origin. Since $e$ is arbitrary then it follows that $u$ is radially symmetric. 
\end{proof}

\subsection{Explicit computations}\label{subsec-comp}
Consider (\ref{prob-main}) with $\Omega= B(0,R_0)$ and $g\equiv h_0>0$ on the boundary of $\Omega$.
We search for a radial solution to (\ref{prob-main}) with given $K=K_0 >0$, $\lambda=\lambda_0$ under these conditions.
Let $u$ be radial, i.e. there exists a function $v:[R_0,\infty) \to \R_+$ such that $u(x) = v(r)$ for $|x|=r$.
Then, a direct computation reveals
\begin{equation}\label{u-partials}
u_{ii }(x)  =\frac{x_i^2}{r^2} \left( \ddot{v} - \frac{\dot{v}}{r}  \right) + \frac{\dot v}{r}
\qquad \text{and}  \qquad 
u_{ij} (x) = \frac{x_i x_j }{r^2} \left( \ddot{v} - \frac{\dot{v}}{r}  \right),
\end{equation}
where $1\leq i \neq j\leq d$.
To compute the determinant of the Hessian of $u$ we will rely on \emph{matrix-determinant lemma},
which states that for invertible $A\in M_d(\R) $ and column-vectors $u,v\in \R^d$  one has
$ \mathrm{det} (A  + u v^T)  = ( 1+v^T A^{-1} u ) \mathrm{det} A $. To apply this identity,
set $ \ddot{v} - \frac{\dot v }{ r } := \alpha $, $\frac{\dot{v} }{r} : = \beta$, and $u^T : = \frac 1r (x_1,...,x_d )$;
we get
$$
\det D^2 u = \mathrm{det} ( \alpha u u^T + \beta \mathrm{Id} ) : = \mathcal{D} .
$$ 
We will first assume that both $\alpha$ and $\beta $ are non-zero,
in which case we obtain
\begin{multline}\label{det-comput}
\mathcal{D} = \alpha^d \mathrm{det} \left( u u^T + \frac{\beta}{\alpha} \mathrm{Id} \right) =  \ \text{(matrix-determinant lemma)} \\
\alpha^d \left( 1+u^T \hspace{0.05cm} \frac{\alpha}{\beta} \mathrm{Id} \hspace{0.05cm}  u \right) \mathrm{det} \left( \frac{\beta}{\alpha} \mathrm{Id} \right) =
\alpha^d \left(  1+ \frac{\alpha}{\beta} |u|^2  \right) \left( \frac{\beta}{\alpha} \right)^d = 
\beta^d + \alpha \beta^{d-1},
\end{multline}
where the last equality is in view of the identity $|u|=1$. We now observe that the formula \eqref{det-comput}
remains valid when either of $\alpha$ or $\beta$ becomes 0, we thus have \eqref{det-comput} for all $\alpha, \beta$.
We now plug the values of $\alpha$, $\beta$ into the formula for \eqref{det-comput}, and obtain
$\det D^2 u  = \ddot{v} \left(  \frac{\dot v }{r}  \right)^{d-1}$. Finally, getting back to \eqref{prob-main},
the equation becomes
\begin{equation}\label{ODE-curv}
 \ddot{v} \left(  \frac{\dot v }{r}  \right)^{d-1} = (-1)^d K_0.
\end{equation}
From this ODE we get
\begin{equation}\label{ode-for-v}
(\dot{v}(r) )^d  = (-1)^d K_0 r^d + A,
\end{equation}
where $A$ is some constant. One has two cases here.

\textbf{Case 1.} Take $A=0$. Then $u(x) = B - K_0^{1/d} \frac{|x|^2}{2}$ for a constant $B$.
From the boundary condition on $u$ we get
\begin{equation}\label{par-A-0}
u(x)  = h_0 +  \frac{K_0^{1/d}}{2} (R_0^2 -|x|^2). 
\end{equation}
Finally, the free boundary condition is satisfied, if and only if
\begin{equation}\label{res1}
\lambda_0 = K_0^{1/d} \left( R_0^2 + 2 h_0 K_0^{-1/d}  \right)^{1/2}.
\end{equation}
We conclude that under condition \eqref{res1} the paraboloid given by 
\eqref{par-A-0} produces a solution to \eqref{prob-main}.

\textbf{Case 2. } Assume $A \neq 0$. Then, according to Chebichev's 
theorem on the integration of binomial differentials (\ref{ode-for-v}) can be solved explicitly (in terms of elementary functions) if and only if $d=2$.
Taking $d=2$ and solving \eqref{ode-for-v} we get
$$
v(r) = B - \frac{1}{2} r \sqrt{K_0 r^2 + A} -\frac{A K_0^{-1/2}}{2}
\ln |  r+ K_0^{-1/2} \sqrt{r^2 K_0 +A} | =: B - \varphi_A(r),
$$
where $A,B$ are constants, $A\neq 0$ and
\begin{equation}\label{const-A-cond}
A \geq -R_0^2 K_0.
\end{equation}
From the boundary condition on $u$, we get
$$
B = h_ 0 + \varphi_A(R_0),
$$
and hence 
$$
 v(r) = h_0 + \varphi_A (R_0) - \varphi_A(r), \qquad r \geq R_0. 
$$
From the free boundary condition, we see that $A= \lambda_0^2 - K_0 r^2$,
if $v(r)=0$. Comparing this expression for $A$ with \eqref{const-A-cond}
we get
$$
\lambda_0^2 - K_0 r^2 \geq -K_0 R_0^2 ,
$$ 
and finally
\begin{equation}
r \leq \left( R_0^2 + \frac{\lambda_0^2}{K_0} \right)^{1/2} =: R_{max},
\end{equation}
which shows that radius of the 0-level set of $v$ must be bounded above by $R_{max}$.
Thus, taking into account the monotonicity of $v$, we conclude that $v$ gives
a solution if and only if $v(R_{max}) \leq 0$. 
Plugging the value of $A$ into $\varphi$ we get
$$
\varphi_A(x)  = \frac 12 r \lambda_0 + \frac{ ( \lambda_0^2 - K_0 r^2 ) }{2} K_0^{-1/2}
\ln | r+ K_0^{-1/2} \lambda_0 | = : \varphi(r).
$$
Hence, the function $u(x) = h_0 + \varphi(R_0) - \varphi(|x|)$ produces a solution
to \eqref{prob-main} if and only if
\begin{equation}
h_0 + \varphi(R_0) - \varphi( R_{max} )  \leq 0.
\end{equation}
Clearly the validity of this condition depends on parameters $h_0, K_0, \lambda_0$,
and is not always satisfied.

\end{document}